\newtheorem{theorem}{Theorem}
\newtheorem{proposition}{Proposition}
\newtheorem{lemma}{Lemma}
\newcommand{\reals}{\mathbb{R}}
\newcommand{\be}{\mathbf{e}}
\newcommand{\bx}{\mathbf{x}}
\newcommand{\bw}{\mathbf{w}}
\newcommand{\bu}{\mathbf{u}}
\newcommand{\bv}{\mathbf{v}}
\newcommand{\br}{\mathbf{r}}
\newcommand{\Ocal}{\mathcal{O}}
\newcommand{\norm}[1]{\|#1\|}
\newcommand{\inner}[1]{\langle#1\rangle}
\newcommand{\subsecref}[1]{Subsection~\ref{#1}}
\renewcommand{\eqref}[1]{Eq.~(\ref{#1})}
\newcommand{\lemref}[1]{Lemma~\ref{#1}}
\newcommand{\thmref}[1]{Thm.~\ref{#1}}
\newcommand{\propref}[1]{Proposition~\ref{#1}}
\newcommand{\appref}[1]{Appendix~\ref{#1}}
\title{Oracle Complexity of Second-Order Methods\\ for Smooth 
Convex 
Optimization}
\author{Yossi Arjevani\qquad Ohad Shamir \qquad Ron Shiff\\
Department of 
Computer Science and 
Applied 
Mathematics\\
Weizmann Institute of Science\\
\texttt{\{yossi.arjevani,ohad.shamir\}@weizmann.ac.il}\\
\texttt{ron.shiff1@gmail.com}}
\date{}
\begin{document}

\maketitle

\begin{abstract}
Second-order methods, which utilize gradients as well as Hessians to optimize a 
given function, are of major importance in mathematical optimization. In this 
work, we prove tight bounds on the oracle complexity of such methods for smooth 
convex functions, or equivalently, the worst-case number of iterations required 
to optimize such functions to a given accuracy. In particular, these bounds
indicate when such methods can or cannot improve on gradient-based methods, 
whose oracle complexity is much better understood. We also provide 
generalizations of our results to higher-order methods.
\end{abstract}

\section{Introduction}

We consider an unconstrained optimization problem of the form
\begin{equation}\label{eq:obj}
\min_{\bw\in \reals^d} f(\bw),
\end{equation}
where $f$ is a generic smooth and
convex function. A natural and fundamental question is how efficiently can we 
optimize such functions. 

We study this question through the well-known framework 
of oracle complexity \citep{YudNem83}, which focuses on iterative methods 
relying on local information. Specifically, it is assumed that the algorithm's 
access to the function $f$ is limited to an oracle, which 
given a point $\bw$, returns the values and derivatives of the function $f$ at 
$\bw$. This naturally models standard optimization approaches to 
unstructured problems such as \eqref{eq:obj}, and allows one to study their 
efficiency, by bounding the number of oracle calls required to reach a given 
optimization error. Different classes of methods can be distinguished by the 
type of oracle they use. For example, gradient-based methods (such as gradient 
descent or accelerated gradient descent) rely on a first-order oracle, which 
returns gradients, whereas methods such as the Newton method rely on a 
second-order oracle, which returns gradients as well as Hessians. 

The theory of \emph{first-order} oracle complexity is quite well developed 
\citep{YudNem83,nesterov2004introductory,nemirovski2005efficient}. For example, 
if the dimension is unrestricted, $f$ in \eqref{eq:obj} has 
$\mu_1$-Lipschitz gradients, and the algorithm makes its first oracle query at 
a point $\bw_1$, then the worst-case number of queries $T$ required to attain a 
point $\bw_T$ satisfying $f(\bw_T)-\min_{\bw}f(\bw)\leq \epsilon$ is
\begin{equation}\label{eq:firstorder}
\Theta\left(\sqrt{\frac{\mu_1 D^2}{\epsilon}}\right)~,
\end{equation}
where $D$ is an upper bound on the distance between $\bw_1$ and the nearest 
minimizer of $f$. Moreover, if the function $f$ is also 
$\lambda$-strongly 
convex for some $\lambda>0$\footnote{Assuming $f$ is twice-differentiable, this 
corresponds to 
$\nabla^2 f(\bw)\succeq \lambda I$ uniformly for all $\bw$.}, then the oracle 
complexity 
bound is
\begin{equation}\label{eq:firstorder_strong}
\Theta\left(\sqrt{\frac{\mu_1}{\lambda}}\cdot 
\log\left(\frac{\mu_1 D^2}{\epsilon}\right)\right)~.
\end{equation}
Both bounds are achievable using accelerated gradient descent 
\citep{nesterov1983method}.

However, these bounds do not capture 
the attainable performance of \emph{second-order} methods, which rely on 
gradient as well as Hessian information. This is a central class of 
optimization methods, including the well-known Newton method and its many 
variants. Clearly, since these methods rely on Hessians as well as gradients, 
their oracle complexity can only be better than first-order methods. 
On the flip side, the per-iteration computational complexity is generally 
higher,
in order to process the additional 
Hessian information (especially in high-dimensional problems where the Hessian 
matrix may be very large). Thus, it is natural to ask how much does this 
added per-iteration complexity pay off in terms of oracle complexity. 

To answer this question, one needs good oracle complexity 
lower bounds for second-order methods, which establish the limits of 
attainable performance using any such algorithm. Perhaps surprisingly, such 
results do not seem to currently exist in the literature, and clarifying the 
oracle complexity of such methods was posed as an important open question (see 
for example \citealp{nesterov2008accelerating}). The goal of this paper is to 
address this gap.

Specifically, we prove that when the dimension is sufficiently large, for 
the class of convex functions with $\mu_1$-Lipschitz gradients 
and $\mu_2$-Lipschitz Hessians, the worst-case oracle complexity of any 
deterministic algorithm is
\begin{equation}\label{eq:secondorder}
	\Omega\left(\min\left\{\sqrt{\frac{\mu_1 
			D^2}{\epsilon}}~,~\left(\frac{\mu_2D^3}{\epsilon}\right)^{2/7}\right\}\right).
\end{equation}
This bound is tight up to constants, as it is matched by a combination of 
existing methods in the literature (see discussion below). Moreover, if we 
restrict ourselves to functions which are $\lambda$-strongly 
convex, we prove an oracle complexity lower bound of
\begin{equation}\label{eq:secondorder_strong}
\Omega\left(\left(\min\left\{\sqrt{\frac{\mu_1}{\lambda}}~,~
\left(\frac{\mu_2}{\lambda}D\right)^{2/7}\right\}+
\log
\log_{18}\left(\frac{\lambda^3/\mu_2^2}{\epsilon}\right)\right)\right).
\end{equation}
Moreover, we establish that this bound is tight up to logarithmic factors 
(independent of $\epsilon$), 
utilizing a novel adaptation of the A-NPE algorithm proposed in 
\citet{monteiro2013accelerated} (see 
\appref{sec:opt_strongly_convex_alg}). These new lower 
bounds have several implications:
\begin{itemize}
\item Perhaps unexpectedly, 
\eqref{eq:secondorder_strong} 
establishes that one cannot 
avoid in general a polynomial dependence on geometry-dependent ``condition 
numbers'' of the form $\mu_1/\lambda$ or $\mu_2D/\lambda$, even with 
second-order methods. This is despite the ability of such methods to favorably 
alter the geometry of the problem (for example, the Newton method is 
well-known to be affine invariant).
\item To improve on the oracle complexity of 
first-order methods for strongly-convex problems 
(\eqref{eq:firstorder_strong}) by more than logarithmic factors, one cannot 
avoid a 
polynomial dependence on the initial distance 
$D$ to the optimum. This is despite the fact that the dependence on $D$ with 
first-order methods is only logarithmic. In fact, when $D$ is sufficiently 
large 
(of order $\frac{\mu_1^{7/4}}{\mu_2\lambda^{3/4}}$ or larger), second-order 
methods 
cannot improve on the oracle complexity of first-order methods by more than 
logarithmic factors. 
\item In the convex case, second-order methods are again no better than 
first-order methods in certain parameter regimes (i.e., when $\mu_2\geq 
\mu_1^{7/4}\sqrt{D}/\epsilon^{3/4}$), despite the availability of more 
information.
\end{itemize}

Finally, we show how our proof techniques can be generalized, to establish 
lower bounds for methods employing higher-order derivatives. In particular, for 
methods using all derivatives up to order $k$, we show that for convex 
functions with $\mu_k$-Lipschitz k-th order derivatives, the oracle complexity 
is 
\[
\Omega \left(\left(\frac{\mu_{k} D^{k+1}}{(k+1)!k\epsilon}\right)^{2/(3k+1)}\right)~.
\]
Note that this directly generalizes \eqref{eq:firstorder} for $k=1$, and 
\eqref{eq:secondorder} when $k=2$ and $\mu_1$ is unrestricted. 

\subsection*{Related Work}

Below, we review some pertinent results in the context of second-order methods. 
Related results in the contest of k-th order methods are discussed in 
Subsection \ref{subsec:higher}.

Perhaps the most well-known and fundamental second-order method is the Newton 
method, which relies on iterations of the form $\bw_{t+1}=\bw_{t}-
(\nabla^2 f(\bw))^{-1}\nabla f(\bw)$ 
(see e.g., \citet{boyd2004convex}). It is well-known that this method exhibits 
\emph{local} quadratic convergence, in the sense that if $f$ is strictly 
convex, and the method is initialized close enough to the optimum 
$\bw^*=\arg\min_{\bw} f(\bw)$, then 
$\Ocal(\log\log(1/\epsilon))$ iterations suffice 
to reach a solution $\bw$ such that $f(\bw)-f(\bw^*)\leq \epsilon$. However, 
in order to get global convergence
(starting from an arbitrary point not necessarily close to the optimum), one 
needs to make some algorithmic modifications, such as introducing a step size 
parameter or line search, employing trust region methods, or adding various 
types of regularization (see for example \citet{conn2000trust} and references 
therein). Despite the huge literature on the subject, the worst-case global 
convergence behavior of these methods is not well understood 
\citep{nesterov2006cubic}. 
For the Newton method with a line search, the number of 
iterations can be upper bounded by
\[
\Ocal\left(\frac{\mu_1^2 
\mu_2^2}{\lambda^5}(f(\bw_1)-f(\bw^*))+\log\log_2\left(\frac{\lambda^3/\mu_2^2}{\epsilon}\right)\right),
\]
where $\mu_1,\mu_2$ are the Lipschitz parameters of the gradients and Hessians 
respectively, and assuming the function is $\lambda$-strongly convex 
(\citet{kantorovich1948functional}, see also 
\cite{boyd2004convex}). Note that the first term
captures the initial phase required to get sufficiently close to $\bw^*$, whereas the second term captures the quadratically
convergent phase. Although the final convergence is rapid, the first phase is 
the dominant one in the bound (unless $\epsilon$ is exceedingly small). If $f$ 
is self-concordant\footnote{That is, for any vectors 
$\bv,\bw$, the function $g(t)=f(\bw+t\bv)$
satisfies $|g'''(t)|\leq 2g''(t)^{3/2}$}, this can be improved to 
\[
\Ocal\left((f(\bw_1)-f(\bw^*))+\log\log_2\left(\frac{1}{\epsilon}\right)\right),
\]
independent of the strong convexity and Lipschitz parameters 
(\citet{nesterov1994interior}). Unfortunately, 
not all practically relevant objective functions are self-concordant. For 
example, loss functions common in machine learning applications, such as the 
logistic loss $x\mapsto \log(1+\exp(-x))$, are not 
self-concordant\footnote{These can often be made self-concordant by re-scaling, 
smoothing and adding regularization (e.g. 
\citet{bach2010self}), but even when possible, these modifications strongly 
affect the 
$f(\bw_1)-f(\bw^*)$ 
term in the bound, and prevents it from being independent of the strong 
convexity and Lipschitz parameters.}, and our own results utilize the simple 
but not self-concordant function $x\mapsto |x|^3$. 

Returning to our setting of generic convex and smooth functions, and focusing 
on strongly convex functions for now, the best existing upper bounds (we are 
aware of) were obtained for cubic-regularized variants of the Newton method, 
where at each iteration one essentially minimizes a quadratic approximation of 
the function at the current point, regularized by a cubic term
\citep{nesterov2006cubic,nesterov2008accelerating}. The existing analysis (in 
section 6 of \citet{nesterov2008accelerating}) implies an oracle complexity 
bound of at most
\[
	\Ocal\left(\left(\frac{\mu_2 
	}{\lambda}D\right)^{1/3}+\log\log_2\left(\frac{\lambda^3/\mu_2^2}
	{\epsilon}\right)\right),
\]
where $D=\norm{\bw_1-\bw^*}$ is the distance from the initialization point 
$\bw_1$ to the optimum $\bw^*$ (see section 6 in 
\cite{nesterov2008accelerating}, as well as \cite{cartis2012evaluation} for 
another treatment of such cubic-regularized methods). However, as we show in 
\appref{sec:opt_strongly_convex_alg}, a better oracle complexity bound can be 
obtained, by adapting the A-NPE method proposed in 
\citep{monteiro2013accelerated} and 
analyzed for convex functions, to the strongly convex case. The resulting 
complexity upper bound is
\begin{equation}\label{eq:upsecond}
\Ocal\left(\left(\frac{\mu_2}{\lambda}D\right)^{2/7}\log\left(\frac{\mu_1\mu_2^2D^2}{\lambda^3}\right)+\log
\log_2\left(\frac{\lambda^3/\mu_2^2}{\epsilon}\right)\right).
\end{equation}

An alternative to the above is to use a hybrid scheme, starting with 
accelerated gradient descent (which is an optimal 
\emph{first-order} method for strongly convex functions with Lipschitz gradients) 
and when close enough to the optimal solution, switch to a cubic-regularized 
Newton method, which is quadratically converging in that 
region\footnote{Instead 
of cubic-regularized Newton, one can also use the standard Newton method, 
although the resulting bound using the existing analysis will have slightly 
worse logarithmic factors.}. The 
required number 
of iterations is then
\begin{equation}\label{eq:twophaseup}
\Ocal\left(\sqrt{\frac{\mu_1}{\lambda}}\cdot \log\left(\frac{\mu_1\mu_2^2
	D^2}{\lambda^3}\right)+\log\log_2\left(\frac{\lambda^3/\mu_2^2}
{\epsilon}\right)\right),
\end{equation}
where $D=\norm{\bw_1-\bw^*}$
(see \citet{nesterov2004introductory,nesterov2008accelerating}). Clearly, by 
taking the best of \eqref{eq:upsecond} and \eqref{eq:twophaseup} (depending on 
the parameters), one can theoretically attain an oracle complexity which is the 
minimum of \eqref{eq:upsecond} and \eqref{eq:twophaseup}. This minimum matches 
(up to a logarithmic factors) the lower bound in \eqref{eq:secondorder_strong}, 
which we establish in this paper. 

It is interesting to note that the 
bounds in \eqref{eq:upsecond} and \eqref{eq:twophaseup} are not directly 
comparable: The first bound has a polynomial dependence on $\mu_2/\lambda$ and 
$\norm{\bw_1-\bw^*}$, and a logarithmic dependence on $\mu_1$, whereas the 
second bound has a polynomial dependence on $\mu_1/\lambda$, logarithmic 
dependence on $\norm{\bw_1-\bw^*}$, and a logarithmic dependence on $\mu_2$. 
In a rather wide parameter regime (e.g. when $D$ is reasonably large, as often 
occurs in practice), the bound of the hybrid scheme can be better 
than that of pure second-order methods. In light of this, 
\citet{nesterov2008accelerating} raised the question of whether second-order 
schemes are indeed useful at the initial stage of the optimization process, for 
these types of problems. Our results indicate that indeed, in certain parameter 
regimes, this is not the case. 

Analogous results can be obtained for convex (not necessarily strongly convex) 
smooth functions. Using an appropriate analysis of the accelerated 
cubic-regularized Newton method \citep{nesterov2008accelerating}, one can 
attain a bound of
\[
\Ocal\left(\left(\frac{\mu_2D^3}{\epsilon}\right)^{1/3}\right)~.
\]
More recently, \citet{monteiro2013accelerated} proposed an accelerated hybrid 
proximal extragradient method, denoted as A-NPE, which attains a better bound 
of 
\begin{equation}\label{eq:anpe_convex}
\Ocal\left(\left(\frac{\mu_2D^3}{\epsilon}\right)^{2/7}\right).
\end{equation}
In addition, using an optimal first-order method (such as accelerated gradient 
descent), one can attain a bound of 
\begin{equation}\label{eq:twophaseup_convex}
\Ocal\left(\sqrt{\frac{\mu_1 D^2}{\epsilon}}\right)~.
\end{equation}
Clearly, by taking the best of the last two approaches (depending on the 
problem parameters), one can attain an oracle complexity equal to the minimum 
of the two bounds in \eqref{eq:anpe_convex} and \eqref{eq:twophaseup_convex}. 
This is matched (up to constants) by the lower bound in \eqref{eq:secondorder}, 
which we establish in this paper.

Finally, we discuss the few existing lower bounds known for 
second-order methods. If $\mu_2$ is not bounded (i.e.,the 
Hessians are not Lipschitz), it is easy to show that 
Hessian information is not useful. Specifically, the lower bound of 
\eqref{eq:firstorder} 
for first-order methods will then also apply to second-order methods, and in 
fact, to 
any method based on local information (see \citet[section 
7.2.6]{YudNem83} and \citet{arjevani2016oracle}). Of 
course, this lower bound does not apply to second-order methods when $\mu_2$ is 
bounded. In our setting, it is also possible to prove an 
$\Omega(\log\log(1/\epsilon))$ lower 
bound, even in one dimension \citep[section 8.1.1]{YudNem83}, but this does not 
capture the dependence on the strong convexity and Lipschitz parameters. 
Some algorithm-specific lower bounds in the context of non-convex 
optimization are provided in \citet{cartis2010complexity}. Finally, we were 
recently informed of a new work (\citet{agarwalhazan17}, yet unpublished at the 
time of writing), which 
uses a clean and elegant smoothing approach, to derive second- and
higher-order oracle lower bounds directly from known first-order oracle lower 
bounds, as well as extensions to randomized algorithms. However, the resulting 
bounds are not as tight as ours.

\section{Main Results}

In this section, we formally present our main results, starting with 
second-order oracle complexity bounds (\subsecref{subsec:secondorder}), and 
then discussing extensions to higher-order oracles (\subsecref{subsec:higher}).

\subsection{Second-order Oracle}\label{subsec:secondorder}

We consider a \emph{second-order} oracle, which given a point
$\bw$ returns the function's value $f(\bw)$, its gradient $\nabla f(\bw)$ and
its Hessian $\nabla^2 f(\bw)$, and algorithms, which produce a 
sequence of points $\bw_1,\bw_2,...,\bw_T$, with each $\bw_t$ being 
some deterministic function of the oracle's responses at 
$\bw_1,\ldots,\bw_{t-1}$. Our main results (for strongly convex and convex 
functions $f$ respectively) are provided below.

\begin{theorem}\label{thm:main_strong}
For any positive $\mu_1,\mu_2,\lambda,D,\epsilon$ such that 
$
\frac{\mu_1}{\lambda}\geq c_1, \frac{\mu_2}{\lambda}D\geq c_2$ and
$
\epsilon < \frac{c_3\lambda^3}{\mu_2^2}
$
(for some positive universal constants $c_1,c_2,c_3$), and 
any algorithm as above with 
initialization point $\bw_1$, 
there exists a function 
$f:\reals^d\rightarrow\reals$ (for some finite $d$) such that
\begin{itemize}
\item $f$ is
$\lambda$-strongly convex, twice-differentiable, has $\mu_1$-Lipschitz
gradients and $\mu_2$-Lipschitz Hessians, and has a global minimum $\bw^*$ 
satisfying $\norm{\bw_1-\bw^*}\leq D$.
\item The index $T$ required to ensure $f(\bw_T)-f(\bw^*) ~\leq~
\epsilon$ is at least
\[
c\cdot\left(\min\left\{\sqrt{\frac{\mu_1}{\lambda}}~,~
\left(\frac{\mu_2}{\lambda}D\right)^{2/7}\right\}+
\log
\log_{18}\left(\frac{\lambda^3/\mu_2^2}{\epsilon}\right)\right)
\]
for some universal constant $c>0$~.
\end{itemize}
\end{theorem}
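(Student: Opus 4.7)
The plan is to decompose the desired bound into three independent pieces, each established via its own hard instance: (i)~the ``first-order'' term $\sqrt{\mu_1/\lambda}$, (ii)~the ``second-order'' term $(\mu_2 D/\lambda)^{2/7}$, and (iii)~the quadratic-convergence term $\log\log(\lambda^3/\mu_2^2\epsilon)$. Since the same oracle complexity $T$ must dominate each of these separately, the minimum of (i)~and (ii), plus (iii), is a lower bound on $T$ up to a constant, which is exactly the form stated in the theorem.

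Pieces (i) and (iii) are essentially off-the-shelf. For (i), I would invoke the classical chained-quadratic first-order lower bound of Nesterov; its Hessian is \emph{constant}, so the $\mu_2$-Lipschitz-Hessian constraint is trivially satisfied, and since a second-order oracle subsumes a first-order one, the bound transfers verbatim to our setting. For (iii), I would appeal to the one-dimensional $\log\log(1/\epsilon)$ lower bound of Nemirovski and Yudin (Section~8.1.1 of their book, recalled in the introduction), which already applies to second-order oracles on one-dimensional strongly convex smooth functions and is therefore orthogonal to pieces (i) and (ii).

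The main technical work is piece (ii). I would build a cubic analogue of the Nesterov chain: in dimension $d \asymp T$,
\[
f(\bw) \;=\; \gamma \sum_{i=1}^{T-1} |\bw_{i+1}-\bw_i|^3 \;-\; \delta\, \bw_1 \;+\; \frac{\lambda}{2}\|\bw\|^2,
\]
possibly composed with an orthogonal rotation to implement the resisting-oracle construction. The \emph{zero-chain property}, which is the reason cubics are the natural choice, follows from the fact that $|x|^3$, its first derivative $3x^2\sign(x)$, and its second derivative $6|x|$ all vanish at $x=0$: a query supported on the first $k$ coordinates therefore returns a gradient and Hessian supported on the first $k+1$ coordinates only, and a straightforward induction (starting from $\bw_1=0$) shows that $\bw_t$ lies in the span of the first $t$ canonical directions. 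It then remains to (a) compute the unconstrained minimizer $\bw^*$ of $f$ and the best value of $f$ over the span of the first $T$ canonical vectors, (b) estimate the Lipschitz constants of $\nabla f$ and $\nabla^2 f$ in terms of $\gamma,\delta,T$, and (c) tune $\gamma,\delta,T,d$ so as to saturate the four constraints ($\lambda$-strong convexity, $\mu_1$- and $\mu_2$-smoothness, and $\|\bw^*\|\le D$) while keeping the suboptimality of the restricted minimizer at least $\epsilon$.

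The main obstacle is step (c): the KKT conditions for the cubic chain give a non-linear recursion on the coordinates of $\bw^*$ whose sharp asymptotics, when balanced against $\|\bw^*\|\le D$ and the gap constraint, are what produce the unusual exponent $2/7$. (For comparison, the analogous balance with quadratic terms yields the familiar $1/2$ exponent of the classical first-order bound.) Once this recursion is solved and the parameters are tuned within the regime specified by the constants $c_1,c_2,c_3$, one obtains $T=\Omega((\mu_2 D/\lambda)^{2/7})$, which, combined with pieces (i) and (iii), completes the proof.
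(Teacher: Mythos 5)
Your decomposition strategy is legitimate in principle (a max of separate per-instance lower bounds dominates half their sum), but two of your three pieces do not survive scrutiny, and the failures are exactly where the paper's real work lies. The fatal problem is piece (i): lower bounds proved against a \emph{first-order} oracle do not transfer to a \emph{second-order} oracle --- the implication runs in the opposite direction, since the stronger oracle gives the algorithm strictly more information. Concretely, Nesterov's chained quadratic is useless here: its Hessian is a constant matrix $A$, so a single second-order query at any point reveals $A$ and $\nabla f(\bw_1)=A\bw_1-\bb$, from which the exact minimizer $A^{-1}\bb$ is computed in one step. The paper makes precisely this point when motivating its construction. Consequently the $\sqrt{\mu_1/\lambda}$ branch of the min cannot be imported from the first-order literature at all; in the paper it emerges from the \emph{same} cubic-chain construction as the $(\mu_2 D/\lambda)^{2/7}$ branch, because the gradient-Lipschitz requirement caps how large the free parameter $\gamma$ (equivalently the cap level $\Delta$) may be taken, via the constraint $\tfrac{2\mu_2\Delta}{3}+\lambda\le\mu_1$.

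This connects to the second gap, in piece (ii): your hard function uses pure $|x|^3$ links, whose second derivative $6|x|$ is unbounded, so $\nabla f$ is \emph{not} globally Lipschitz for any finite $\mu_1$, no matter how you tune $\gamma,\delta,T,d$. Hence the instance is simply not in the function class of the theorem, and your step (b)/(c) cannot be completed as described. The fix is not a tuning of scalars but a modification of the link function itself --- the paper replaces $|x|^3$ by a function that is cubic on $[-\Delta,\Delta]$ and quadratic outside --- and it is exactly this capping that (a) puts the instance in the class and (b) generates the $\min\{\sqrt{\mu_1/\lambda},(\mu_2D/\lambda)^{2/7}\}$ tradeoff you were hoping to obtain by combining two unrelated instances. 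Finally, piece (iii) is the least problematic but still not off-the-shelf: the Nemirovski--Yudin one-dimensional $\log\log(1/\epsilon)$ bound, as the paper notes, does not carry the $\lambda,\mu_2$ dependence, so you would need to verify that a rescaled version stays $\lambda$-strongly convex with $\mu_1$-Lipschitz gradients and $\mu_2$-Lipschitz Hessians and delivers the scale $\lambda^3/\mu_2^2$ inside the double logarithm. In the paper this term too comes out of the single construction: the minimizer's tail coordinates decay doubly exponentially ($\inner{\bv_{t_0+k},\bw^*}\ge\tfrac{108\lambda}{\mu_2}\cdot 18^{-2^k}$), so any algorithm that has ``seen'' only $T$ directions is still far from $\bw^*$ unless $T=\Omega(\log\log(\lambda^3/\mu_2^2\epsilon))$, and the two terms are then combined exactly by your max-versus-average observation.
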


\begin{theorem}\label{thm:main2}
	For any positive $\mu_1,\mu_2,D,\epsilon$ and any
	algorithm as above with initialization point $\bw_1$, there exists a 
	function $f:\reals^d \rightarrow 
	\reals$ (for some finite $d$) such that
	\begin{itemize}
		\item $f$ is convex, twice-differentiable, has $\mu_1$-Lipschitz 
		gradients and $\mu_2$-Lipschitz Hessians, and has a global minimum 
		$\bw^*$ 
		satisfying $\norm{\bw_1-\bw^*} \leq D$.
		\item The index $T$ required to 
		ensure $f(\bw_T)-f(\bw^*)\leq \epsilon$ is at least
		\[
		c\cdot \min\left\{\sqrt{\frac{\mu_1 
		D^2}{\epsilon}}~,~\left(\frac{\mu_2D^3}{\epsilon}\right)^{2/7}\right\}
		\]
		for some universal constant $c>0$~.
	\end{itemize}
\end{theorem}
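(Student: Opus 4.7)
The plan is to construct, for each of the two branches in the $\min$, a hard instance in the class together with a resisting-oracle argument, and then to take whichever bound is larger for the given parameters. For the branch $\sqrt{\mu_1 D^2/\epsilon}$, I would reuse Nesterov's quadratic chain function $\tfrac{\mu_1}{8}(\bw^\top A\bw - 2\be_1^\top \bw)$ with $A$ the standard $(-1,2,-1)$-tridiagonal matrix: it has constant Hessian, which is trivially $\mu_2$-Lipschitz for any $\mu_2\geq 0$, so it already lies in the smaller class considered here and inherits the classical first-order lower bound. The real content is the branch $(\mu_2 D^3/\epsilon)^{2/7}$, for which I would use a Nesterov-style cubic chain function
\[
f_n(\bw) \;=\; \gamma\!\left(|w_1|^3 + \sum_{i=1}^{n-1}|w_{i+1}-w_i|^3\right) - \beta w_1,
\]
possibly modified by a quadratic extension outside a ball of radius $R$ to make the gradient globally $\mu_1$-Lipschitz. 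The scalings $\gamma,\beta$, chain length $n$ and cut-off radius $R$ will be tuned so that $f_n$ is convex, twice differentiable, has a $\mu_2$-Lipschitz Hessian (noting that $|x|^3$ has second derivative $6|x|$, which is $6$-Lipschitz), and has a minimizer within distance $D$ of the initialization.

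The decisive structural feature of both functions is that their adjacent-coordinate coupling makes gradient and Hessian behave tridiagonally: if $\bw$ is supported on $\mathrm{span}(\be_1,\ldots,\be_k)$, then $\nabla f(\bw)$ and the range of $\nabla^2 f(\bw)$ are contained in $\mathrm{span}(\be_1,\ldots,\be_{k+1})$. Given an arbitrary deterministic algorithm, I would compose the hard function with an orthogonal rotation $U$ constructed inductively in response to the algorithm's queries, so that the $t$-th chain direction $U\be_t$ is chosen orthogonal to the span of the first $t-1$ queries. A standard induction then yields the invariant that the $t$-th query is supported, in the rotated coordinate system, on $\be_1,\ldots,\be_{t-1}$, so that after $T$ queries the algorithm has effectively only optimized $f_n$ restricted to its first $T-1$ coordinates. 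A direct computation of the minimum of $f_n$ on that truncated subspace, exploiting the homogeneity of the cubic terms, yields a gap of order $\mu_2 D^3/T^{7/2}$; equating with $\epsilon$ gives the claimed $\Omega((\mu_2 D^3/\epsilon)^{2/7})$ bound, and the analogous quadratic computation recovers $\Omega(\sqrt{\mu_1 D^2/\epsilon})$.

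The main obstacle is the second-order part. Unlike the first-order setting, the oracle now returns an entire Hessian matrix, and one must verify carefully that the tridiagonal structure is genuinely preserved through the adversarial rotation, so that no single Hessian query lets the algorithm leap forward by more than one chain coordinate. A second technical subtlety is reconciling the two Lipschitz constraints: since $|x|^3$ has a Lipschitz Hessian but not a globally Lipschitz gradient, one must either confine the construction to a ball that provably contains all iterates (using the $D$-bound on $\bw^*$ together with a crude a priori bound on $\|\bw_t\|$) or replace $|x|^3$ outside this ball by a smooth quadratic extension in a way that preserves the adjacency structure and the Hessian Lipschitz constant. Once these two points are settled, the remaining step --- tuning $\gamma,\beta,n,R$ to saturate the $\mu_1,\mu_2,D$ constraints and extracting the exponent $7/2$ from the explicit truncated minimum --- is routine.
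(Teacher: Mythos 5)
Your outline for the $(\mu_2 D^3/\epsilon)^{2/7}$ branch is essentially the paper's construction (a cubic chain whose link function has vanishing value, gradient \emph{and} second derivative at zero, a quadratic modification to enforce $\mu_1$-Lipschitz gradients, an adversarial rotation chosen query-by-query, and a truncated-minimum gap of order $\mu_2 D^3/T^{7/2}$), and that part is sound in spirit. The genuine gap is in the other branch. You propose to obtain $\Omega(\sqrt{\mu_1 D^2/\epsilon})$ from Nesterov's quadratic chain, arguing it ``inherits the classical first-order lower bound.'' It does not: that lower bound is only valid against algorithms that see values and gradients. Against the second-order oracle of this theorem, a single query at any point returns the constant Hessian $\nabla^2 f = \frac{\mu_1}{4}A$ (in your rotated coordinates, $\sum_i (U\be_i-U\be_{i+1})(U\be_i-U\be_{i+1})^\top$ up to scaling) together with one gradient, which determines the quadratic completely; the algorithm can output the exact minimizer at the second query. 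Concretely, your structural invariant fails for the quadratic: since $(x\mapsto x^2)''(0)\neq 0$, the Hessian at a point supported on the first $k$ rotated coordinates still depends on \emph{all} chain directions, so the resisting-rotation induction cannot be carried out. Nor can you simply discard this branch: in the regime $\sqrt{\mu_1 D^2/\epsilon}\leq(\mu_2 D^3/\epsilon)^{2/7}$ the theorem's bound \emph{is} the first branch, and the cubic bound is actually false there (accelerated gradient descent attains $O(\sqrt{\mu_1 D^2/\epsilon})$), so your two-construction plan cannot be patched by always using the cubic instance.

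The paper closes exactly this hole with a single construction: the scalar link function $g$ equals $\tfrac13|x|^3$ for $|x|\leq\Delta$ and is quadratic beyond, with $\Delta=\tfrac{3\mu_1}{2\mu_2}$, so $g(0)=g'(0)=g''(0)=0$ and the information-hiding lemma applies verbatim to Hessian queries, while the gradient is globally $\mu_1$-Lipschitz. The $\min$ then emerges from a case analysis of where the minimizer of the truncated problem lands: when $D^2 \lesssim \Delta^2 T^3$ it lies in the cubic region and the gap is $\Omega(\mu_2 D^3/T^{7/2})$; when $D^2 \gtrsim \Delta^2 T^3$ it lies in the quadratic region of $g$ and the gap is $\Omega(\mu_1 D^2/T^2)$ --- i.e., the ``first-order'' rate is extracted from a function that is still second-order flat at the origin of each link, not from a globally quadratic one. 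A secondary, smaller issue in your write-up: the option of ``confining the construction to a ball that provably contains all iterates'' is not available, since an arbitrary algorithm may query anywhere; the per-link (coordinate-wise) modification of $|x|^3$, as above, is the workable version of your quadratic-extension idea.
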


We emphasize that the theorems focus on the high-dimensional setting, where the 
dimension $d$ is not necessarily fixed and may depend on other problem 
parameters.
% (in the fixed-dimension setting) 
%the dimension-dependent setting, where $d$ is fixed and the attainable oracle 
%complexity (as a function of $d$) can have a different behavior 
%(\cite{YudNem83,nemirovski2005efficient}). 
Also, we note that the parameter 
constraints in \thmref{thm:main_strong} are purely for technical reasons (they 
imply that the different terms in the bound are at least some positive 
constant), and can probably be relaxed somewhat. 

Let us compare these theorems to the upper bounds discussed in the related work 
section, which are
\[
\Ocal\left(\min\left\{\sqrt{\frac{\mu_1}{\lambda}} 
,\left(\frac{\mu_2 
	D}{\lambda}\right)^{2/7}\right\}\cdot \log\left(\frac{\mu_1\mu_2^2
		D^2}{\lambda^3}\right)+\log\log_2\left(\frac{\lambda^3/\mu_2^2}
{\epsilon}\right)\right).
\]
in the strongly convex case,
and
\[
\Ocal\left(\min\left\{\sqrt{\frac{\mu_1 
		D^2}{\epsilon}}~,~\left(\frac{\mu_2D^3}{\epsilon}\right)^{2/7}\right\}\right).
\]
in the convex case. Our bound in the convex case is tight up to 
constants, and in the strongly convex case, up to a 
$\log(\mu_1\mu_2^2D^2/\lambda^3)$ factor. We conjecture that some such 
logarithmic factor 
(possibly a smaller one) is indeed necessary, in order to get a tight 
interpolation to the $\Omega(\sqrt{\mu_1/\lambda}\cdot\log(\mu_1 
D^2/\epsilon))$ lower bound of first-order methods as $\mu_2\rightarrow\infty$ 
(see \citet[section 7.2.6]{YudNem83} and \citet{arjevani2016oracle}), and that 
it can be recovered with a more careful analysis of our construction. However, 
this involves some non-trivial technical challenges,  which we leave 
to future work.

Comparing the lower and upper bounds in the strongly convex case, one can make 
the following observations: 
\begin{itemize}
\item The lower bound captures the two phases common in 
second-order methods such as the Newton method: An initial slow convergence 
from the initialization point to the local neighborhood of the optimum 
(captured by the $\min\left\{\sqrt{\frac{\mu_1}{\lambda}}~,~
\left(\frac{\mu_2}{\lambda}D\right)^{2/7}\right\}$ term), 
followed by a fast local quadratic convergence to the optimum (captured by the 
second term, which is doubly-logarithmic in the accuracy $\epsilon$). 
\item Unless $\epsilon$ is exceedingly small, the oracle complexity 
is dominated by the geometry-dependent terms
$\mu_1/\lambda$ and $\mu_2D/\lambda$. This is despite the fact that 
second-order methods can use Hessian information to alter the geometry of the 
problem (for example, the Newton method is well-known to be 
affine invariant). 
\item If $\mu_2D/\lambda$ is sufficiently large (specifically, if $D$ is order 
of $\frac{\mu_1^{7/4}}{\mu_2\lambda^{3/4}}$ or larger), then the 
lower bound becomes at least $\sqrt{\mu_1/\lambda}$, which is no betters what 
can be obtained with first-order methods up to logarithmic factors (see 
\eqref{eq:firstorder_strong}). Since $D$ 
often scales inversely with the strong convexity of the problem (e.g. since the 
strong convexity is due to a regularization term), this is a rather broad and 
reasonable regime. 
\item On the other hand, if $\mu_2D/\lambda$ is smaller than 
$\sqrt{\mu_1/\lambda}$, then 
the oracle complexity can be significantly better than that of first-order 
methods, but this still comes at the inevitable price of a polynomial 
dependence on the distance $D$ from the optimum. In contrast, first-order 
methods have only a logarithmic dependence on $D$ (see 
\eqref{eq:firstorder_strong}).
\end{itemize}

Similar types of conclusions regarding on the behavior of first and 
second-order methods can be drawn as in the strongly convex case.
Namely, if $\mu_2D^3/\epsilon$ 
is large enough (specifically, if $\mu_2\geq 
\mu_1^{7/4}\sqrt{D}/\epsilon^{3/4}$), the 
complexity of second-order methods is not significantly better than what can be 
obtained with first-order methods. 

\subsection{Higher Order Oracles}\label{subsec:higher}

In addition to first-order and second-order oracles, it is of interest to 
understand what can be achieved with methods employing higher order 
derivatives. It turns out that the techniques we use to establish our 
second-order lower bounds can be easily generalized to such higher-order 
methods. 

More explicitly, we consider methods which can be modelled as interacting with 
a \emph{k-th order} oracle, which given a point
$\bw$ returns the function's value and all of its derivatives up to order $k$, 
namely,  $f(\bw),\nabla f(\bw),\nabla^2 f(\bw),\ldots,\nabla^k f(\bw)$. Given 
access to such an oracle, the method produces a sequence of points 
$\bw_1,\bw_2,\ldots,\bw_T$ as before (where each $\bw_t$ is a deterministic 
function of the previous oracle responses). For simplicity, we will focus here 
on the 
case of convex functions (not necessarily strongly convex), where the $k$-th 
order derivative is Lipschitz continuous.

\begin{theorem}\label{thm:main3}
	For any positive integer $k$, positive $\mu_k,D,\epsilon$, and algorithm 
	based on a $k$-th order oracle as above, there 
	exists a function $f:\reals^d \rightarrow 
	\reals$ (for some finite $d$) such that
	\begin{itemize}
		\item $f$ is convex, $k$ times differentiable, $k$-order smooth (i.e., 
		$\norm{\nabla^kf(\bu)-\nabla^kf(\bv)} \leq \mu_k \norm{\bu -\bv}$) and 
		has a global minimum 
		$\bw^*$ 
		satisfying $\norm{\bw_1-\bw^*} \leq D$.
		\item The index $T$ required to 
		ensure $f(\bw_T)-f(\bw^*)\leq \epsilon$ is at least
		\[
		c\left(\frac{\mu_{k} D^{k+1}}{(k+1)!k\epsilon}\right)^{2/(3k+1)}~,
		\]
		for some universal constant $c>0$~.
	\end{itemize}
\end{theorem}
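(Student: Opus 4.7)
The plan is to extend the classical ``chain'' hard-function construction (due to Nesterov for $k=1$, and used by the authors for $k=2$ in the proof of Theorem~\ref{thm:main2}) from a quadratic coupling to the higher-power coupling $\tfrac{1}{(k+1)!}|\bw_{i}-\bw_{i+1}|^{k+1}$. The choice of power is dictated by the problem: the $k$-th derivative of $g(x)=|x|^{k+1}$ is proportional to $|x|$ and hence Lipschitz with constant $(k+1)!$, matching the smoothness class, while simultaneously $g(0)=g'(0)=\cdots=g^{(k)}(0)=0$, which is exactly the condition needed for a $k$-th order oracle to be unable to detect new coordinates through the ``flat'' parts of the function.

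On $\reals^{N+1}$ with $N$ slightly larger than the target iteration count $T$, I would take
\[
f_N(\bw) \;=\; \frac{C\mu_k}{(k+1)!}\sum_{i=1}^{N}|\bw_{i}-\bw_{i+1}|^{k+1} \;-\; b\,\bw_1, \qquad \bw_{N+1}\equiv 0,
\]
where $C,b>0$ are absolute (possibly $k$-dependent) constants to be tuned at the end. Convexity and $k$-fold differentiability are immediate from the term-by-term convexity and $C^k$-regularity of $|x|^{k+1}$. The main technical obstacle I anticipate is verifying a \emph{dimension-free} Lipschitz bound for the $k$-th derivative tensor. I would expand $(\nabla^k f_N(\bw)-\nabla^k f_N(\bw'))[\bu_1,\ldots,\bu_k]$ as a chain sum involving $g^{(k)}(\bw_i-\bw_{i+1})-g^{(k)}(\bw'_i-\bw'_{i+1})$ times $\prod_{j}(u_{j,i}-u_{j,i+1})$, apply the generalized H\"older inequality with $k+1$ factors each of exponent $k+1$, and then use the monotonicity of $\ell^p$ norms ($\|\ba\|_{k+1}\le\|\ba\|_2$) together with the discrete-gradient bound $\sum_i(a_i-a_{i+1})^2\le 4\|\ba\|_2^2$ to collapse everything into an estimate of the form $C\cdot 2^{k+1}\mu_k\cdot\|\bw-\bw'\|_2$. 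Choosing $C$ to be a suitable constant then yields the required $\mu_k$-Lipschitz property; this is a clean extension of the eigenvalue-at-most-$4$ estimate for the chain Laplacian that underlies the $k=1,2$ analyses.

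I would then run the standard resisting-oracle argument. Given any deterministic algorithm with $\bw_1=0$, choose orthonormal vectors $\bv_1,\ldots,\bv_d$ adaptively so that $\bw_t\in\mathrm{span}(\bv_1,\ldots,\bv_t)$ for every $t$, and plant the rotated function $\bw\mapsto f_N(U^{\top}\bw)$ where $U$ has columns $\bv_i$. The key invariance is this: at any $\bw\in\mathrm{span}(\bv_1,\ldots,\bv_t)$, each summand $|\bw_{j}-\bw_{j+1}|^{k+1}$ with $j>t$ evaluates to zero and, by the vanishing property of $g$ at the origin, has all of its first $k$ derivatives vanishing there; only the summand with $j=t$ contributes a new direction, and it carries tensor components solely along $\bv_t,\bv_{t+1}$. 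Hence the complete oracle response at $\bw$ is supported in $\mathrm{span}(\bv_1,\ldots,\bv_{t+1})$, and $\bw_T$ is forced to have zero component on $\bv_{T+1},\ldots,\bv_d$.

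Finally I would extract the quantitative lower bound. Solving $\partial_{\bw_i}f_N=0$ for $i\ge 2$ gives equal adjacent differences, so the minimizer is $\bw^{\star}_i=(N-i+1)d$ with $d^k=bk!/(C\mu_k)$; the optimality identity then yields $\|\bw^{\star}\|^2\asymp N^3 d^2$ and $f_N(\bw^{\star})=-\tfrac{k}{k+1}Nbd$. Since an admissible $\bw_T$ can be no better than the minimizer of $f_N$ restricted to the first $T$ (rotated) coordinates, which is itself a length-$T$ chain with value $-\tfrac{k}{k+1}Tbd$, the gap is bounded below by $\tfrac{k}{k+1}(N-T)bd$. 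Taking $N=2T$ and enforcing $\|\bw^{\star}\|=D$ fixes $d\asymp D/T^{3/2}$ and $b\asymp \mu_k D^k/T^{3k/2}$, producing gap $\asymp \mu_k D^{k+1}/T^{(3k+1)/2}$. Inverting $\text{gap}\le\epsilon$ for the smallest feasible $T$ yields the stated bound $T=\Omega\bigl((\mu_k D^{k+1}/((k+1)!\,k\,\epsilon))^{2/(3k+1)}\bigr)$, with the exponent-$(3k+1)/2$ power counting absorbing all the $k$-dependent constants. The sanity checks at $k=1,2$ recover the familiar first- and second-order exponents $1/2$ and $2/7$, confirming the scaling.
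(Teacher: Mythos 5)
Your construction, minimizer computation, and parameter tuning essentially coincide with the paper's own proof: the paper likewise uses a chain of $|x|^{k+1}$ couplings with a linear tilt (with the cosmetic differences that it keeps boundary terms $g(w_1)$, $g(w_T)$ instead of pinning $w_{N+1}=0$, and normalizes by $\mu_k/(k!\,2^{(k+3)/2})$ rather than $C\mu_k/(k+1)!$), proves a dimension-free Lipschitz bound for the $k$-th derivative tensor via the same chain-Laplacian estimate, and lower bounds the error by $\min f_T-\min f_{2T}$, which scales as $\mu_k D^{k+1}/T^{(3k+1)/2}$ exactly as you compute; your remark that constants of the form $a^{k+1}$ are absorbed because $2(k+1)/(3k+1)$ stays bounded is also how the paper arrives at a universal constant $c$.

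The genuine gap is in your resisting-oracle step. You propose to choose the $\bv_i$ adaptively so that $\bw_t\in\mathrm{span}(\bv_1,\ldots,\bv_t)$ for every $t$, and conclude that $\bw_T$ is ``forced'' to have zero component along $\bv_{T+1},\ldots,\bv_d$. This cannot be implemented consistently: whenever $\inner{\bv_t,\bw_t}\neq 0$, the oracle's answer at $\bw_t$ (the value and derivatives of the summand coupling $\bv_t$ and $\bv_{t+1}$) genuinely depends on $\bv_{t+1}$, so the adversary must commit to $\bv_{t+1}$ before answering the very query that determines $\bw_{t+1}$; but in your scheme $\bv_{t+1}$ is supposed to be extracted, Gram-Schmidt style, from the not-yet-seen $\bw_{t+1}$ --- a circularity. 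If the adversary instead commits early, nothing prevents $\bw_{t+1}$ from leaving $\mathrm{span}(\bv_1,\ldots,\bv_{t+1})$, and the ``forced zero component'' conclusion is simply false for general deterministic algorithms; it is the classical span-restriction assumption in disguise, whereas the theorem is stated for arbitrary deterministic methods. The paper resolves this in the opposite direction: each $\bv_t$ is chosen orthogonal to all iterates $\bw_1,\ldots,\bw_t$ produced so far; then, precisely because $g$ and its first $k$ derivatives vanish at $0$, the oracle response at $\bw_t$ provably does not depend on the not-yet-chosen $\bv_{t+1},\bv_{t+2},\ldots$ (\lemref{lem:info}), so consistency is automatic, and at the end $\inner{\bv_j,\bw_T}=0$ for $j\geq T$, which is exactly what reduces $f_{2T}(\bw_T)$ to at least the length-$T$ chain minimum. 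With this replacement your argument goes through; without it, your lower bound only covers algorithms whose iterates stay in the span of the observed derivative information.
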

Note that this result directly generalizes existing results for first-order 
oracles ($k=1$), as well as our results for second-order oracles ($k=2$, when 
$\mu_1$ is unrestricted). 

Finally, we compare our lower bound to the best upper bound we are aware of, 
established by \cite{baes2009estimate} using a high-order method with oracle 
complexity of
\[
\Ocal\left(\sqrt{k}\left(\frac{f(\bw_1)-f(\bw^*)}{\epsilon}+\frac{\mu_kD^{k+1}}{(k+1)!\epsilon}\right)^{1/(k+1)}\right)~.
\]
Note that the upper bound contains an additional $(f(\bw_1)-f(\bw^*))/\epsilon$ 
term, and moreover, the exponent (as a function of $k$) is larger than ours 
($1/(k+1)$ vs. $2/(3k+1)$). Based on our results, we know that this upper bound 
is loose in the $k=2$ case, so we conjecture that it is indeed loose for all 
$k$, and can be improved.

\section{Proof Ideas}

The proofs of our theorems are based on a careful modification of a 
standard lower bound construction for first-order methods (see 
\cite{nesterov2004introductory}). That construction uses quadratic functions, 
which in the convex case and ignoring various 
parameters, have a basic structure of the form
\[
f_T(\bw) = f_T(w_1,w_2,\ldots) = w_1^2+\sum_{j=1}^{T-1}(w_j-w_{j+1})^2+w_T^2-w_1
\]
(more precisely, one considers $f_T(V\bw)$ for a certain orthogonal matrix $V$, 
and use additional parameters depending on the smoothness). A crucial 
ingredient of 
the proof is that the 
function $x\mapsto x^2$ has a value and derivative of zero at the origin, which 
allows us to construct a function which ``hides'' information from an 
algorithm relying solely on values and gradients. This can be shown to lead to 
an optimization error lower bound of the form $\min_{\bw} f_{T}(\bw)-\min_{\bw} 
f_{2T}(\bw)$ after $T$ oracle queries, which for first-order methods leads to 
an $\Omega(\mu_1 D^2/T^2)$ lower bound on the error, translating to an 
$\Omega(\sqrt{\mu_1 D^2/\epsilon})$ lower bound on $T$. However, this 
construction leads to trivial bounds for second-order methods, since 
given the Hessian and a gradient of a quadratic function at just a single 
point, one can already compute the exact minimizer.

Our approach to handle second-order (and more generally, $k$-order) methods is 
quite simple: Instead of 
$x\mapsto x^2$, we 
rely on mappings of the form $x\mapsto |x|^{k+1}$, and use functions with the 
basic structure
\[
f_T(\bw) = 
|w_1|^{k+1}+\sum_{j=1}^{T-1}|w_j-w_{j+1}|^{k+1}+|w_T|^{k+1}-w_1.
\]
The intuition is that $x\mapsto |x|^{k+1}$ has a value
\emph{and first k derivatives} of zero at the origin, and therefore variants of 
the function above can be used to ``hide'' information from the algorithm, even 
if it can receive Hessians or higher-order derivatives of the function. Another 
motivation for choosing such functions is that they are generally not 
self-concordant, and therefore the upper bounds relevant to self-concordant 
functions do not apply. We rely on this construction and arguments similar to 
those of first-order oracle lower bounds, to get our results. 

In the derivation of our results for second-order methods, there are two 
technical challenges that need to be overcome: The first is that $f_T$, as 
defined above (for $k=2$), can be shown to have globally Lipschitz Hessians, 
but not globally Lipschitz gradients as required by our theorems. To tackle 
this, we replace the mapping $x\mapsto |x|^3$ by a more complicated mapping, 
which is cubic close to the origin and quadratic further away. This necessarily 
complicates the proof. The second challenge is that due to the cubic terms, 
computing the minimizer of $f_T$ and its minimal value is more challenging than 
in first-order lower bounds, especially in the strongly convex case (where we 
are unable to even find a closed-form expression for the minimizer, and resort 
to bounds instead). Again, this makes the analysis more complicated.

We conclude this section by sketching how our bounds can be derived in case of 
second-order methods, and in the simplest possible setting, where we wish to 
obtain an $\Omega((D^3/\epsilon)^{2/7})$ lower bound for 
the class of convex functions with Lipschitz Hessians (and no 
assumptions on the Lipschitz parameter of the gradients), assuming the 
algorithm makes its first query at the origin. In that case, consider the 
function $f_T$ in 
this class of the form
\[
f_T(\bw) = |w_1|^3+\sum_{j=1}^{T-1}|w_j-w_{j+1}|^3+|w_T|^3-3\gamma\cdot w_1,
\]
where $\gamma$ is a parameter to be chosen later. Computing the derivatives and 
setting to zero, and arguing that the minimizer must have non-negative 
coordinates, we get that the optimum satisfies
\[
w_1^2+(w_1-w_2)^2 = 
\gamma~~,~~w_T^2=(w_{T-1}-w_T)^2
\]
and
\[
\forall j=2,3,\ldots,T-1~,~~~(w_{j-1}-w_j)^2=(w_j-w_{j+1})^2.
\]
It can be verified that this is satisfied by $w_j = 
(T+1-j)\sqrt{\frac{\gamma}{T^2+1}}$ for all $j=1,2,\ldots,T$, and that this is 
the unique 
minimizer of $f_T$ as a function on $\reals^T$. Moreover, 
assuming $\gamma\leq D^2/T$, the norm of this minimizer (and hence the initial 
distance to it from the algorithm's first query point, by assumption) is at 
most $D$ as required. Plugging this $\bw$ into $f_T$, we get that 
$\min_{\bw} f_T(\bw)$ equals
\begin{align*}
f_T(\bw) &= 
T^3\left(\frac{\gamma}{T^2+1}\right)^{3/2}+(T-1)\left(\frac{\gamma}{T^2+1}\right)^{3/2}
+ \left(\frac{\gamma}{T^2+1}\right)^{3/2}-3\gamma T\sqrt{\frac{\gamma}{T^2+1}}\\
&=T(T^2+1)\left(\frac{\gamma}{T^2+1}\right)^{3/2}-
3\frac{\gamma^{3/2}T}{\sqrt{T^2+1}}~=~ -\frac{2\gamma^{3/2}T}{\sqrt{T^2+1}}~.
\end{align*}
Now, using arguments very similar to those in first-order oracle 
complexity lower bounds \citep{nesterov2004introductory}, it is possible 
to 
construct a function for which the 
optimization error of the algorithm is lower bounded by 
$\min_{\bw}f_{T}(\bw)-\min_{\bw}f_{2T}(\bw)$. By the calculations above, this 
in turn equals
\[
2\gamma^{3/2}\left(\frac{2T}{\sqrt{4T^2+1}}
-\frac{T}{\sqrt{T^2+1}}\right)
~=~2\gamma^{3/2}\left(\frac{1}{\sqrt{1+\frac{1}{4T^2}}}-\frac{1}{\sqrt{1+\frac{1}{T^2}}}\right).
\]
Using the fact that $\frac{1}{\sqrt{1+x}}\approx 1-\frac{1}{2}x$ for small $x$, 
this 
equals $\Omega(\gamma^{3/2}/T^2)$. Choosing $\gamma$ on the order of $D^2/T$ 
(as required earlier to satisfy the norm constraint on the minimizer), we get 
a lower bound of 
$\Omega(D^3/T^{7/2})$ on the 
optimization error 
$\epsilon$, or equivalently, a lower bound of $\Omega((D^3/\epsilon)^{2/7})$ on 
$T$.

\section{Proof of \thmref{thm:main_strong}}\label{sec:proof_main_strong}

We will assume without loss of generality that the algorithm initializes at 
$\bw_1=\mathbf{0}$ (if that is not the case, one can simply replace the 
``hard'' function $f(\bw)$ below by $f(\bw-\bw_1)$, and the same proof holds 
verbatim). Thus, the theorem requires that our function has a minimizer $\bw^*$ 
satisfying $\norm{\bw^*}\leq D$. 

Let $\Delta,\gamma$ be parameters to be chosen later. Define 
$g:\reals\mapsto\reals$ as
\[
g(x) = \begin{cases} \frac{1}{3}|x|^3 & |x|\leq \Delta\\ 
\Delta x^2-\Delta^2|x|+\frac{1}{3}\Delta^3&|x|>\Delta ,
\end{cases}.
\]
which is easily verified to be convex and twice continuously differentiable, 
and let
$\bv_1,\bv_2,\ldots,\bv_{\tilde{T}}$ be orthogonal unit vectors in $\reals^d$ 
which will be
specified
later. Letting the number of iterations $T$ be fixed, we consider the function
\[
f(\bw) =
\frac{\mu_2}{12}\left(\sum_{i=1}^{\tilde{T}-1}g(\inner{\bv_i,\bw}
-\inner{\bv_{i+1},\bw})-\gamma\inner{\bv_1,\bw}\right)
+\frac{\lambda}{2}\norm{\bw}^2~,
\]
where $\tilde{T}\geq 
\max\left\{4\gamma\left(\frac{\mu_2}{6\lambda}\right)^2+1,
2T,\frac{\gamma\mu_2}{6\lambda}+1\right\}$ is 
some sufficiently large number, and the dimension $d$ is at least $2\tilde{T}$. 

The proof is constructed of several parts: First, we analyze properties of the 
global minimum of $f$ (Subsection \ref{subsec:minimizer}). Then, we prove the 
oracle 
complexity lower bound in Subsection \ref{subsec:mainproof} (depending on 
$\Delta,\gamma$), and finally, in Subsection 
\ref{subsec:smoothstrong}, we choose the parameters so that $f$ indeed has
the various geometric properties specified in the theorem. 

\subsection{Minimizer of $f$}\label{subsec:minimizer}

The goal of this subsection is to prove the following proposition, which 
characterizes key properties of the global minimum of 
$f$:
\begin{proposition}\label{prop:opt} Suppose that 
$ \gamma\geq 10^4\left(\frac{\lambda}{\mu_2}\right)^2$ and $\Delta\geq 
\sqrt{\gamma}$. 
Then $f$ has a unique minimizer $\bw^*$ which satisfies the following:
	\begin{enumerate}
		\item For any $t\in \{1,2,\ldots,\tilde{T}\}$, it holds that  
		$\inner{\bv_t,\bw^*} \geq
		\max\left\{0~,~\frac{\gamma^{3/4}}{7\sqrt{12\lambda/\mu_2}}
			+\sqrt{\gamma}\left(\frac{1}{2}-t\right)\right\}$.
		\item There exists some $t_0 \leq \tilde{T}/2$ such that for all 
		indices $k\in \{0,1,\ldots,\tilde{T}-t_0\}$, it holds that\\
		$\inner{\bv_{t_0+k},\bw^*} ~\geq~ 
		\frac{108\lambda}{\mu_2}\cdot(18)^{-2^k}$.
		\item $\norm{\bw^*}^2 \leq 
		\frac{2\gamma^{7/4}}{(12\lambda/\mu_2)^{3/2}}$~.
	\end{enumerate}
\end{proposition}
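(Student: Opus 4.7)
My plan is to reduce the problem to a finite system in the scalar coordinates $w_i := \langle \bv_i, \bw^*\rangle$, derive a clean recursion from the first-order optimality conditions, and then extract each of the three claims via a combination of monotonicity arguments and a ``slow/fast regime'' analysis of the resulting sequence.

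First, strong convexity of $f$ (from the $\tfrac{\lambda}{2}\|\bw\|^2$ term) gives a unique minimizer, and rotational invariance of $f$ in directions orthogonal to $\mathrm{span}(\bv_1,\ldots,\bv_{\tilde T})$ forces $\bw^* = \sum_i w_i \bv_i$. Setting $r := 12\lambda/\mu_2$ and telescoping the coordinate-wise optimality equations yields the identity $b_k := g'(w_k - w_{k+1}) = r\sum_{j > k} w_j$ for $1 \le k \le \tilde T-1$, together with the total $\sum_i w_i = \gamma/r$. A backward induction starting from $r w_{\tilde T} = b_{\tilde T-1}$ shows $w_1 > w_2 > \cdots > w_{\tilde T} > 0$; and since $b_k \le b_0 = \gamma$ and $\Delta \ge \sqrt\gamma$, the differences $a_k := w_k - w_{k+1}$ stay in the cubic region of $g$, so $a_k = \sqrt{b_k}$ and $a_k$ is itself monotonically decreasing.

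For Parts 1 and 3 I would analyze the slow regime, where $a_k \approx \sqrt\gamma$. Let $K$ be the largest index with $a_K \ge \sqrt\gamma/2$, equivalently $\sum_{j \le K} w_j \le 3\gamma/(4r)$. On one hand, monotonicity of $a$ gives $w_t \ge (K-t+1)\sqrt\gamma/2$ for $t \le K$; on the other, maximality of $K$ combined with $w_j \le w_1$ forces $w_1 \ge 3\gamma/(4r(K+1))$. Balancing these two competing constraints over $K$ yields $w_1 \gtrsim \gamma^{3/4}/\sqrt r$, with enough slack for the explicit constant $1/7$. Combined with the elementary bound $w_t \ge w_1 - (t-1)\sqrt\gamma$ (immediate from $a_l \le \sqrt\gamma$) and the trivial bound $w_t \ge 0$, this gives Part 1. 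For Part 3, a rearrangement inequality for monotone nonnegative sequences under the constraint $\sum_l l\,a_l = \gamma/r$ yields $w_1 = \sum_l a_l \le \sqrt{2 a_1\gamma/r} \le \sqrt 2\,\gamma^{3/4}/\sqrt r$, so $\|\bw^*\|^2 \le w_1 \sum_i w_i \le \sqrt 2\,\gamma^{7/4}/r^{3/2}$.

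The main obstacle is Part 2, which captures the doubly-exponential ``Newton'' phase. From $a_{k+1}^2 = a_k^2 - r w_{k+1}$ combined with $w_{k+1} \ge a_{k+1}$ one immediately gets $a_{k+1} \le a_k^2/r$ once $a_k \le r$; the matching lower bound $a_{k+1} \ge a_k^2/(2r)$ requires the bootstrap $w_{k+1} \le 2 a_{k+1}$, which has to be established inductively from the fast decay of subsequent $a_j$'s. Taking $t_0$ to be the first index where $a_{t_0} \in [r/2,r]$ (existence follows from monotonicity together with a quantitative estimate that the per-step drop near the slow-to-fast transition is of order $r$, and $t_0 \le \tilde T/2$ follows from the stated lower bound on $\tilde T$), iterating the quadratic lower bound yields $a_{t_0+k} \ge 2r\,(1/4)^{2^k}$, which exceeds $\tfrac{108\lambda}{\mu_2}(18)^{-2^k} = 9r\cdot 18^{-2^k}$ because $(18/4)^{2^k} = (4.5)^{2^k} \ge 4.5$. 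Part 2 then follows from $w_{t_0+k} \ge a_{t_0+k}$. The most delicate steps will be the $w_{k+1} \le 2a_{k+1}$ bootstrap, and verifying that $t_0$ can always be chosen with $a_{t_0}$ safely inside $[r/2,r]$ with all constants consistent.
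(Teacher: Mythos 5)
Your high-level route is the same as the paper's: pass to the scalar chain $w_t=\inner{\bv_t,\bw^*}$, telescope the stationarity conditions into $g'(a_t)=\tilde{\lambda}\sum_{j>t}w_j$ with $a_t:=w_t-w_{t+1}$, $\tilde{\lambda}:=12\lambda/\mu_2$, and $\sum_j w_j=\gamma/\tilde{\lambda}$ (\lemref{lem:woptstruct}), then a slow-phase balancing argument for Parts 1 and 3 and a quadratic recursion for Part 2 (\lemref{lem:phase1}--\lemref{lem:phase3}). Several of your sub-arguments differ genuinely and are sound: the backward sign-propagation induction starting from $g'(w_{\tilde{T}-1}-w_{\tilde{T}})=\tilde{\lambda}w_{\tilde{T}}$, with $\sum_j w_j=\gamma/\tilde{\lambda}>0$ ruling out the nonpositive branches, replaces the paper's exchange/symmetrization arguments for positivity and monotonicity and is arguably cleaner; working directly with $g$ and noting $g'(a_t)\le\gamma\le g'(\Delta)$ keeps every difference in the cubic branch and removes the need for the paper's comparison step (\lemref{lem:phase4}); and your inequality $(\sum_l a_l)^2\le 2a_1\sum_l l\,a_l$ for nonincreasing nonnegative sequences (e.g.\ via $a_la_m\le a_1a_{\max(l,m)}$) replaces the paper's quadratic inequality for $w_1$ (\eqref{eq:w1upbound}). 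Two small slips there: $\sum_l l\,a_l\le\gamma/\tilde{\lambda}$ is an inequality, not an equality, and $w_1=w_{\tilde{T}}+\sum_l a_l$ rather than $\sum_l a_l$; but $w_{\tilde{T}}\le\gamma/(\tilde{\lambda}\tilde{T})\le\tilde{\lambda}/16$ is negligible against $\gamma^{3/4}/\sqrt{\tilde{\lambda}}$ under $\gamma\ge 10^4(\lambda/\mu_2)^2$, and the slack between your $\sqrt{2}$ and the constant $2$ in Part 3 absorbs it. Your Part 1 balancing over the largest $K$ with $a_K\ge\sqrt{\gamma}/2$ is a mild variant of \lemref{lem:phase1} and clears the constant $1/7$.

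The one step that does not hold as written---and which you flagged---is the Part 2 bookkeeping. A decreasing sequence need not hit the window $[\tilde{\lambda}/2,\tilde{\lambda}]$: the identity $a_k^2-a_{k+1}^2=\tilde{\lambda}w_{k+1}$ together with the (correct) bootstrap $w_{k+1}\le 2a_{k+1}$ whenever $a_{k+1}\le\tilde{\lambda}/2$ only gives $a_k\le 3a_{k+1}$ across the transition, so at the first index $s$ with $a_s\le\tilde{\lambda}$ you are guaranteed only $a_s>\tilde{\lambda}/3$; moreover $a_{k+1}^2+2\tilde{\lambda}a_{k+1}\ge a_k^2$ yields $a_{k+1}\ge a_k^2/((1+\sqrt{2})\tilde{\lambda})$, not $a_k^2/(2\tilde{\lambda})$. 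With these corrected constants your chain gives $a_{s+k}\ge(1+\sqrt{2})\tilde{\lambda}\cdot(7.3)^{-2^k}$, which misses the target $9\tilde{\lambda}\cdot 18^{-2^k}$ exactly at $k=0$. The repair is local: set $t_0:=s-1$ (it exists, since $a_1^2=\gamma-\tilde{\lambda}w_1\ge\gamma/3>\tilde{\lambda}^2$ under the assumption on $\gamma$), so that $w_{t_0}\ge a_{t_0}>\tilde{\lambda}\ge 9\tilde{\lambda}/18$ settles $k=0$, while for $k\ge1$ the iterate $a_{s+k-1}\ge(1+\sqrt{2})\tilde{\lambda}\cdot(7.3)^{-2^{k-1}}$ exceeds $9\tilde{\lambda}\cdot(18^2)^{-2^{k-1}}$ with large margin; and $t_0\le\tilde{T}/2$ follows, as you say, because $w_t\ge a_t>\tilde{\lambda}$ for $t<s$ forces $s\le\gamma/\tilde{\lambda}^2+1$. (The paper sidesteps the window issue by running the quadratic recursion on the coordinates $w_t$ themselves, with $t_0$ defined by the one-sided threshold $w_{t_0}>\tilde{\lambda}/2\ge w_{t_0+1}$; see \lemref{lem:phase2}.) With this adjustment, your plan does establish the proposition.
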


Since $f$ is strongly convex, its global minimizer is unique and well-defined.
To prove the proposition, we will consider the simpler strongly-convex function
\begin{equation}\label{eq:tildef}
\tilde{f}(\bw) =
\frac{1}{3}\sum_{i=1}^{\tilde{T}-1}|w_i-w_{i+1}|^3
+\frac{\tilde{\lambda}}{2}\norm{\bw}^2-\gamma\cdot w_1,
\end{equation}
where 
\[
\tilde{\lambda}:=\frac{12\lambda}{\mu_2}~,
\]
and prove that its minimizer $\tilde{\bw}^*$ satisfies the 
following:
\begin{enumerate}
	\item For any $t\in \{1,2,\ldots,\tilde{T}\}$, it holds that   
	$\tilde{w}^*_t \geq
	\max\left\{0~,~\frac{\gamma^{3/4}}{7\sqrt{\lambda}}
	+\sqrt{\gamma}\left(\frac{1}{2}-t\right)\right\}$
	(\lemref{lem:phase1}).
	\item There exists some  $t_0 \leq \tilde{T}/2$ such that 
	for all $k\in \{0,1,\ldots,\tilde{T}-t_0\}$, it holds that
	$\tilde{w}^*_{t_0+k} ~\geq~ 9\tilde{\lambda}\cdot(18)^{-2^k}$ 
	(\lemref{lem:phase2}).
	\item $\sum_{i=1}^{\tilde{T}}\tilde{w}^{*2}_{i} \leq 
	\frac{2\gamma^{7/4}}{\tilde{\lambda}^{3/2}}$ (\lemref{lem:phase3})
\end{enumerate}
We then argue  that the minimizer $\bw^*$ of $f$ satisfies
$\inner{\bv_i,\bw^*}=\tilde{w}^*_{i}$ for all $i=1,2,\ldots,\tilde{T}$ (\lemref{lem:phase4}), and 
that $\norm{\bw^*}^2 = \sum_{i=1}^{\tilde{T}}\inner{\bv_i,\bw^*}^2$ 
(\lemref{lem:phase5}), from which Proposition
\ref{prop:opt} follows.

We begin with the following technical key result:
\begin{lemma}\label{lem:woptstruct}
It holds that $\tilde{w}^*_1\geq \tilde{w}^*_2\geq
\cdots \geq \tilde{w}^*_{\tilde{T}} \geq  0$, and
\[
\tilde{w}^*_{t+1} = w_{t}^*-\sqrt{\gamma-\tilde{\lambda}
\sum_{j=1}^{t}\tilde{w}^*_j}~~~~~~~\forall t\in \{1,2,\ldots,\tilde{T}-1\}~.
\]
Moreover, 
$\sum_{j=1}^{\tilde{T}}\tilde{w}^*_j = \frac{\gamma}{\tilde{\lambda}}$.
\end{lemma}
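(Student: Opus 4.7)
Strict convexity of $\tilde f$ (thanks to the $\tfrac{\tilde\lambda}{2}\|\bw\|^2$ term) ensures that the minimizer $\tilde{\bw}^*$ is unique and characterized by $\nabla\tilde f(\tilde{\bw}^*)=\mathbf 0$. Setting $\phi(x):=\tfrac{1}{3}|x|^3$ so that $\phi'(x)=x|x|$, and writing $d_t:=\tilde{w}^*_t-\tilde{w}^*_{t+1}$ and $S_t:=\sum_{j=1}^t\tilde{w}^*_j$, the first-order conditions read $\phi'(d_1)+\tilde\lambda\tilde{w}^*_1=\gamma$, $\phi'(d_t)-\phi'(d_{t-1})+\tilde\lambda\tilde{w}^*_t=0$ for $2\le t\le \tilde T-1$, and $\tilde\lambda\tilde{w}^*_{\tilde T}=\phi'(d_{\tilde T-1})$. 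Summing the first $t$ of these telescopes the $\phi'$-differences and yields the key identity
\[
\phi'(d_t)\;=\;\gamma-\tilde\lambda\,S_t,\qquad t=1,\ldots,\tilde T-1.
\]
Evaluating at $t=\tilde T-1$ and comparing with the last FOC gives $\tilde\lambda\tilde{w}^*_{\tilde T}=\gamma-\tilde\lambda S_{\tilde T-1}$, i.e.\ $\tilde\lambda S_{\tilde T}=\gamma$, which is the third assertion of the lemma.

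Next I would establish coordinate non-negativity by a reflection argument. Define $\tilde{\bw}'$ coordinate-wise by $\tilde{w}'_j:=|\tilde{w}^*_j|$. The reverse triangle inequality $\big||a|-|b|\big|\le |a-b|$ implies each cubic term weakly decreases under this reflection, the quadratic term $\tfrac{\tilde\lambda}{2}\|\cdot\|^2$ is unchanged coordinate-wise, and $-\gamma\tilde{w}'_1\le -\gamma\tilde{w}^*_1$ since $\gamma>0$ and $|\tilde{w}^*_1|\ge \tilde{w}^*_1$. Hence $\tilde f(\tilde{\bw}')\le \tilde f(\tilde{\bw}^*)$, and uniqueness of the minimizer forces $\tilde{\bw}'=\tilde{\bw}^*$, so $\tilde{w}^*_j\ge 0$ for every $j$.

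Coordinate non-negativity makes $S_t$ non-decreasing in $t$, hence $S_t\le S_{\tilde T}=\gamma/\tilde\lambda$ throughout. Plugging into the telescoped identity gives $\phi'(d_t)=\gamma-\tilde\lambda S_t\ge 0$, and since $\phi'(x)\ge 0\Leftrightarrow x\ge 0$, we conclude $d_t\ge 0$, which (together with $\tilde{w}^*_{\tilde T}\ge 0$) establishes the monotonicity $\tilde{w}^*_1\ge\cdots\ge \tilde{w}^*_{\tilde T}\ge 0$. On the event $d_t\ge 0$ we have $\phi'(d_t)=d_t^2$, so $d_t=\sqrt{\gamma-\tilde\lambda S_t}$, which rearranges into the claimed recursion $\tilde{w}^*_{t+1}=\tilde{w}^*_t-\sqrt{\gamma-\tilde\lambda S_t}$. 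The one nontrivial step is the sign argument: the telescoped FOCs give $\phi'(d_t)=\gamma-\tilde\lambda S_t$, but concluding $d_t\ge 0$ requires controlling $S_t$, which in turn requires coordinate non-negativity. The reflection trick breaks this circularity cleanly by exploiting the pairwise-difference structure of the cubic terms, the pure-quadratic form of the regularizer, and the fact that the only negative linear term is in $w_1$.
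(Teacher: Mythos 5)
Your proof is correct, and it reaches the recursion, the monotonicity, and the identity $\sum_j \tilde{w}^*_j=\gamma/\tilde{\lambda}$ by a route that differs from the paper's in one substantive step. The paper first proves non-negativity by essentially the same reflection trick you use (though it treats $\tilde{w}^*_1>0$ separately so that the linear term is untouched, whereas you absorb it directly via $-\gamma|\tilde{w}^*_1|\leq-\gamma \tilde{w}^*_1$), but it then establishes monotonicity \emph{before} writing any stationarity conditions, via a perturbation argument: assuming $\tilde{w}^*_{j_0}<\tilde{w}^*_{j_0+1}$, it shifts all later coordinates down by $\delta$ (truncated at $0$) and shows this weakly decreases every term, contradicting uniqueness of the minimizer. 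Only with monotonicity in hand does the paper write the first-order conditions as squared differences, unroll them, and sum them to get $\sum_j\tilde{w}^*_j=\gamma/\tilde{\lambda}$. You instead keep the signed derivative $\phi'(x)=x|x|$, so the stationarity conditions are valid with no sign information; telescoping them gives $\phi'(d_t)=\gamma-\tilde{\lambda}S_t$ and, combined with the boundary condition, the sum identity outright, after which non-negativity of the coordinates yields $S_t\leq\gamma/\tilde{\lambda}$, hence $\phi'(d_t)\geq0$, hence $d_t\geq0$ — monotonicity falls out of the sign of $\phi'$ rather than a separate construction. What your approach buys is economy: the somewhat delicate shifting construction (and its case analysis) is replaced by a one-line sign argument, and the sum identity is obtained without any appeal to monotonicity; what the paper's order of argument buys is that it never needs to manipulate the signed derivative $x|x|$, working only with squares once monotonicity is secured. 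Both arguments rest on the same foundations (strong convexity for uniqueness and the first-order characterization, twice continuous differentiability of $|x|^3$), so your proposal is a valid, and arguably cleaner, alternative proof.
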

\begin{proof}
We begin by showing that $\tilde{w}^*_j\geq 0$ for all $j$, first for $j=1$ and 
then
for $j>1$. To do so, note that $\tilde{f}(\mathbf{0})=0$ yet $\nabla
\tilde{f}(\mathbf{0})=-\gamma\cdot \be_1\neq
\mathbf{0}$, and therefore $\mathbf{0}$ is a sub-optimal point. Thus, we must
have $\tilde{f}(\tilde{\bw}^*)<0$. The only negative term in the definition of 
$\tilde{f}(\cdot)$ is
$-\gamma\cdot w_1$, so we must have $\tilde{w}^*_1>0$. We now argue that 
$w_j\geq 0$ 
for 
all $j>1$:
Otherwise, let $\bw$ be the vector which equals $w_j=|\tilde{w}^*_j|$ for all 
$j$, and
note that $w_1=\tilde{w}^*_1$ since we just showed $\tilde{w}^*_1> 0$. Based on 
this, it is
easily verified
that
\[
\tilde{f}(\bw)-\tilde{f}(\tilde{\bw}^*) =
\frac{1}{3}\sum_{i=1}^{\tilde{T}-1}\left(\big||\tilde{w}^*_i|-|\tilde{w}^*_{i+1}|\big|^3-|\tilde{w}^*_i-\tilde{w}^*_{i+1}|^3
\right)~\leq~0~,
\]
which means that $\bw$ is the (unique) minimum of $\tilde{f}$, hence 
$\bw=\tilde{\bw}^*$. By
definition of $\bw$, this implies $\tilde{w}^*_j=|\tilde{w}^*_j|$ for all $j$, 
hence $\tilde{w}^*_j\geq
0$ for all $j$.

We now turn to prove that $\tilde{w}^*_j$ is monotonically decreasing in $j$. 
Suppose
on the contrary that this is not the case, and let $j_0$ be the smallest index
for which $\tilde{w}^*_{j_0}<\tilde{w}^*_{j_0+1}$, and let $\delta := 
\tilde{w}^*_{j_0+1}-\tilde{w}^*_{j_0}>0$.
Define the
vector $\bw$ to be
\[
w_i = \begin{cases} \tilde{w}^*_i & i\leq j_0 \\ \max\{0,\tilde{w}^*_i-\delta\} 
& d \geq i>j_0
\end{cases}.
\]
Note that this vector must be different than $\bw$, as
$w_{j_0+1}=\max\{0,\tilde{w}^*_{j_0+1}-\delta\}=\max\{0,\tilde{w}^*_{j_0}\}=\tilde{w}^*_{j_0}=w_{j_0}$,
hence $w_{j_0+1}=w_{j_0}$ yet $\tilde{w}^*_{j_0+1}>\tilde{w}^*_{j_0}$ by 
assumption. On the
other hand, it is easily verified that $|w_i-w_{i+1}|^3\leq
|\tilde{w}^*_i-\tilde{w}^*_{i+1}|^3$ and $w_i^2 \leq (\tilde{w}^*_i)^2$ for
all\footnote{This is trivially true for $i<j_0$. For $i=j_0$, we have
$|w_{j_0}-w_{j_0+1}|^3=0< |\tilde{w}^*_{j_0}-\tilde{w}^*_{j_0+1}|^3$ and
$w_{j_0}^2=(\tilde{w}^*_{j_0})^2$. For $i>j_0$, we have $|w_i-w_{i+1}|^3=
|\max\{0,\tilde{w}^*_i-\Delta\}-\max\{0,\tilde{w}^*_{i+1}-\Delta\}|^3\leq
|(\tilde{w}^*_i-\Delta)-(\tilde{w}^*_{i+1}-\Delta)|^3 = 
|\tilde{w}^*_i-\tilde{w}^*_{i+1}|^3$, and moreover,
$w_i^2 = \max\{0,\tilde{w}^*_i-\Delta\}^2$, which is $0$ (hence $\leq 
(\tilde{w}^*_i)^2$) if
$\tilde{w}^*_i\leq \Delta$ and less
than $(\tilde{w}^*_i)^2$ if $\tilde{w}^*_i>\Delta$.} $i$,
and therefore $\tilde{f}(\bw)\leq \tilde{f}(\tilde{\bw}^*)$. But since 
$\tilde{\bw}^*$ is the
unique global minimizer and $\bw\neq \tilde{\bw}^*$, we get a contradiction, so 
we must
have $\tilde{w}^*_j$ monotonically decreasing for all $j$.

We now turn to prove the recursive relation $\tilde{w}^*_{t+1} =
w_{t}^*-\sqrt{\gamma-\tilde{\lambda} \sum_{j=1}^{t}\tilde{w}^*_j}$. By
differentiating $\tilde{f}$ and
setting to zero (and using the fact that $\tilde{w}^*_j$ is monotonically 
decreasing in
$j$), we get that
\begin{equation}\label{eq:edge}
(\tilde{w}^*_1 - \tilde{w}^*_2)^2 = \gamma-\tilde{\lambda} 
w_1^*~~~~,~~~~(\tilde{w}^*_{\tilde{T}-1} - \tilde{w}^*_{\tilde{T}})^2 = 
\tilde{\lambda} w_{\tilde{T}}^*
\end{equation}
and 
\begin{equation}\label{eq:diff}
(\tilde{w}^*_{t}-\tilde{w}^*_{t+1})^2 =
(\tilde{w}^*_{t-1}-\tilde{w}^*_{t})^2-\tilde{\lambda}
\tilde{w}^*_{t}~~~~\forall t\in \{2,3,\ldots,\tilde{T}-1\}~.
\end{equation}
By unrolling this recursive form, we get
\[
(\tilde{w}^*_{t}-\tilde{w}^*_{t+1})^2 =
\gamma-\tilde{\lambda}\sum_{j=1}^{t}\tilde{w}^*_j~~~~\forall 
t\in\{1,2,\ldots,\tilde{T}-1\}~,
\]
from which the equation
\begin{equation}\label{eq:recurse}
\tilde{w}^*_{t+1} =
w_{t}^*-\sqrt{\gamma-\tilde{\lambda} \sum_{j=1}^{t}\tilde{w}^*_j}~~~~~~ \forall 
t\in\{1,2,\ldots,\tilde{T}-1\}
\end{equation}
follows, again using the
monotonicity of $\tilde{w}^*_t$ in $t$.

It remains to prove that  $\sum_{j=1}^{\tilde{T}}\tilde{w}^*_j = 
\frac{\gamma}{\tilde{\lambda}}$. By summing both sides of  \eqref{eq:diff} from 
$t=2$ to $t=\tilde{T}-1$ we have that:
\[
(\tilde{w}^*_{\tilde{T}-1}-\tilde{w}^*_{\tilde{T}})^2 =
(\tilde{w}^*_{1}-\tilde{w}^*_{2})^2-\tilde{\lambda}\sum_{t=2}^{\tilde{T}-1}\tilde{w}^*_{t}
\]
So by using \eqref{eq:edge} we get the desired equality.
\end{proof}

\begin{lemma}\label{lem:phase1}
For any $t\in\{1,2,\ldots,\tilde{T}\}$,
\[
\tilde{w}^*_t \geq
\max\left\{0~,~\frac{\gamma^{3/4}}{7\sqrt{\tilde{\lambda}}}+\sqrt{\gamma}\left(\frac{1}{2}-t\right)\right\}~.
\]
\end{lemma}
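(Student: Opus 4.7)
The plan is to combine a one-step coarse bound with a more delicate lower bound on the initial entry $\tilde{w}^*_1$. Lemma \ref{lem:woptstruct} already supplies everything I need: the monotonicity $\tilde{w}^*_1 \geq \cdots \geq \tilde{w}^*_{\tilde{T}} \geq 0$ (which immediately covers the ``$0$'' side of the max), the recursion $\tilde{w}^*_{t+1} = \tilde{w}^*_t - a_t$ with $a_t := \sqrt{\gamma - \tilde{\lambda} S_t}$ and $S_t := \sum_{j \leq t} \tilde{w}^*_j$, and the total-mass identity $S_{\tilde{T}} = \gamma/\tilde{\lambda}$. So the real task is the pointwise bound $\tilde{w}^*_t \geq \gamma^{3/4}/(7\sqrt{\tilde{\lambda}}) + \sqrt{\gamma}(1/2 - t)$.

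My first step is the coarse decrement bound. Since $S_t \geq 0$ forces $a_t \leq \sqrt{\gamma}$, a one-line induction gives $\tilde{w}^*_t \geq \tilde{w}^*_1 - (t-1)\sqrt{\gamma}$. Because $t - 1 \leq t - 1/2$, this reduces the entire claim to the single base bound
\[
\tilde{w}^*_1 \;\geq\; \frac{\gamma^{3/4}}{7\sqrt{\tilde{\lambda}}}.
\]

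The second and main step establishes this base bound. Writing $L := \tilde{w}^*_1$, monotonicity gives $S_s \leq sL$, whence $a_s \geq \sqrt{\gamma - \tilde{\lambda} s L}$ wherever the radicand is non-negative. Telescoping, $\sum_{s=1}^{t} a_s = L - \tilde{w}^*_{t+1} \leq L$ by non-negativity of $\tilde{w}^*_{t+1}$. Choosing $t := \lfloor \gamma/(\tilde{\lambda}L) \rfloor$ and replacing the decreasing-function Riemann sum by its integral,
\[
L \;\geq\; \int_1^{\gamma/(\tilde{\lambda}L)} \sqrt{\gamma - \tilde{\lambda} u L}\, du \;=\; \frac{2(\gamma - \tilde{\lambda}L)^{3/2}}{3\tilde{\lambda}L}.
\]
I then split cases. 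If $L \leq \gamma/(2\tilde{\lambda})$, then $\gamma - \tilde{\lambda}L \geq \gamma/2$ and the displayed inequality rearranges to $L \geq \gamma^{3/4}/(18)^{1/4}\sqrt{\tilde{\lambda}}$, which is stronger than $\gamma^{3/4}/(7\sqrt{\tilde{\lambda}})$ since $(18)^{1/4} < 7$. If instead $L > \gamma/(2\tilde{\lambda})$, the proposition's hypothesis $\gamma \geq 10^4(\lambda/\mu_2)^2$ (hence $\gamma$ is much larger than $\tilde{\lambda}^2$) already implies $\gamma/(2\tilde{\lambda}) \geq \gamma^{3/4}/(7\sqrt{\tilde{\lambda}})$, and we are done.

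The main technical nuisance will be the off-by-one bookkeeping: I must check that $t = \lfloor \gamma/(\tilde{\lambda}L) \rfloor$ lies in $\{1, \ldots, \tilde{T}-1\}$ so that $\tilde{w}^*_{t+1}$ is defined and the integral estimate is legitimate. This follows from the a priori trivial lower bound $L \geq \gamma/(\tilde{\lambda}\tilde{T})$ (implied by $S_{\tilde{T}} = \gamma/\tilde{\lambda} \leq \tilde{T}L$) combined with the construction's hypothesis that $\tilde{T}$ is at least $16\gamma/\tilde{\lambda}^2 + 1$, which together imply $\gamma/(\tilde{\lambda}L) < \tilde{T}$ strictly. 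Once this is settled, everything else is elementary calculus.
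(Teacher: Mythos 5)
Your proposal is correct, and the core step---lower bounding $\tilde{w}^*_1$ by (a constant times) $\gamma^{3/4}/\sqrt{\tilde{\lambda}}$---is reached by a genuinely different route than the paper's. The paper first derives an \emph{upper} bound $\tilde{w}^*_1\leq \sqrt{\gamma}+\sqrt{2\gamma^{3/2}/\tilde{\lambda}}$ (\eqref{eq:w1upbound}) from the total-mass identity, then looks at the first index $t_0$ at which the partial sums exceed $\frac{3\gamma}{4\tilde{\lambda}}$, notes that before $t_0$ every decrement is at least $\sqrt{\gamma}/2$, and converts the nonnegativity of $\tilde{w}^*_{t_0}$ into $\tilde{w}^*_1\geq (t_0-1)\sqrt{\gamma}/2$ with $t_0$ lower bounded via \eqref{eq:w1upbound}. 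You instead integrate the decrement lower bound $\sqrt{\gamma-\tilde{\lambda}s\tilde{w}^*_1}$ against the telescoped total drop, obtaining $\tilde{\lambda}(\tilde{w}^*_1)^2\geq \frac{2}{3}(\gamma-\tilde{\lambda}\tilde{w}^*_1)^{3/2}$, and dispose of the regime $\tilde{w}^*_1>\gamma/(2\tilde{\lambda})$ directly from the hypothesis $\gamma\geq 10^4(\lambda/\mu_2)^2$. Your route avoids the intermediate upper bound on $\tilde{w}^*_1$ entirely (the paper reuses \eqref{eq:w1upbound} later, in \lemref{lem:phase3}, which is a side benefit of its organization) and in fact supports a slightly better constant ($18^{1/4}$ in place of $7$); both arguments use only the ingredients of \lemref{lem:woptstruct}. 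One small patch is needed in your bookkeeping: the two facts you cite, $\tilde{w}^*_1\geq \gamma/(\tilde{\lambda}\tilde{T})$ and $\tilde{T}\geq 16\gamma/\tilde{\lambda}^2+1$, only give $\gamma/(\tilde{\lambda}\tilde{w}^*_1)\leq \tilde{T}$, not the strict inequality you assert. The cleanest fix is to take $t=\min\left\{\lfloor \gamma/(\tilde{\lambda}\tilde{w}^*_1)\rfloor,\tilde{T}-1\right\}$, for which $t+1\geq \gamma/(\tilde{\lambda}\tilde{w}^*_1)$ still holds using only the non-strict bound, so the integral comparison is unaffected; alternatively, the first recursion step gives $\tilde{w}^*_1\geq\sqrt{\gamma-\tilde{\lambda}\tilde{w}^*_1}$, hence $\tilde{w}^*_1=\Omega(\sqrt{\gamma})$, so $\gamma/(\tilde{\lambda}\tilde{w}^*_1)=O(\sqrt{\gamma}/\tilde{\lambda})$, which is strictly below $16\gamma/\tilde{\lambda}^2$ under the hypothesis on $\gamma$. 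With that one-line repair, the argument is complete.
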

\begin{proof}
By the displayed equation in \lemref{lem:woptstruct}, we clearly have
$\tilde{w}^*_{t+1}\geq \tilde{w}^*_t-\sqrt{\gamma}$ for all $t\leq 
\tilde{T}-1$, and therefore
\begin{equation}\label{eq:lowb0}
\tilde{w}^*_t\geq \tilde{w}^*_1-(t-1)\sqrt{\gamma}~~~~\forall t\in 
\{1,2,\ldots,\tilde{T}\}~.
\end{equation}

Using the facts that $\tilde{w}^*_t$
is also always non-negative, that $\tilde{T}\geq 
\frac{\gamma\mu_2}{6\lambda}+1=\frac{\gamma}{\tilde{\lambda}}+1 \geq 
\tilde{w}^*_1 
+ 1$, and by \lemref{lem:woptstruct},
\begin{align*}
\frac{\gamma}{\tilde{\lambda}}&~=~\sum_{t=1}^{\tilde{T}}\tilde{w}^*_t ~\geq~
\sum_{t=1}^{\tilde{T}}\max\{0,\tilde{w}^*_1-(t-1)\sqrt{\gamma}\}~=~ 
\sum_{t=1}^{\lfloor
\tilde{w}^*_1/\sqrt{\gamma}+1\rfloor}\left(\tilde{w}^*_1-(t-1)\sqrt{\gamma}\right)\\
&~=~ \left\lfloor \frac{\tilde{w}^*_1}{\sqrt{\gamma}}+1\right\rfloor\cdot 
\tilde{w}_1^*-\sqrt{\gamma}\frac{\left(\left\lfloor 
\frac{\tilde{w}^*_1}{\sqrt{\gamma}}+1\right\rfloor-1\right)\left\lfloor 
\frac{\tilde{w}^*_1}{\sqrt{\gamma}}+1\right\rfloor}{2}
~\geq~ 
\frac{(\tilde{w}^*_1)^2}{\sqrt{\gamma}}-\sqrt{\gamma}
\frac{\frac{\tilde{w}^*_1}{\sqrt{\gamma}}\left(\frac{w_1^*}{\sqrt{\gamma}}+1\right)}{2},
\end{align*}
which implies that 
$(\tilde{w}^*_1)^2-\sqrt{\gamma}\cdot 
w_1^*-\frac{2\gamma^{3/2}}{\tilde{\lambda}}\leq
0$, which implies in turn
\begin{equation}\label{eq:w1upbound}
\tilde{w}^*_1\leq 
\frac{\sqrt{\gamma}+\sqrt{\gamma+8\gamma^{3/2}/\tilde{\lambda}}}{2}\leq
\frac{\sqrt{\gamma}+\sqrt{\gamma}+\sqrt{8\gamma^{3/2}/\tilde{\lambda}}}{2} =
\sqrt{\gamma}+\sqrt{\frac{2\gamma^{3/2}}{\tilde{\lambda}}}.
\end{equation}
On the other hand, again by \lemref{lem:woptstruct},
we know
that
\[
\tilde{w}^*_{t+1}\leq \tilde{w}^*_{t}-\frac{\sqrt{\gamma}}{2}~,~~~~~\forall 
t\in\{1,2,\ldots,\tilde{T}-1\}:\sum_{j=1}^{t}\tilde{w}^*_j\leq
\frac{3\gamma}{4\tilde{\lambda}},
\]
and hence
\begin{equation}\label{eq:lowb1}
\tilde{w}^*_{t+1}\leq \tilde{w}^*_1-\frac{t\sqrt{\gamma}}{2}~,~~~~~\forall 
t\in\{1,2,\ldots,\tilde{T}-1\}:\sum_{j=1}^{t}\tilde{w}^*_j\leq
\frac{3\gamma}{4\tilde{\lambda}}.
\end{equation}
Let $t_0\in \{1,2,\ldots,\tilde{T}\}$ be the smallest index such that 
$\sum_{j=1}^{t_0}\tilde{w}^*_j > 
\frac{3\gamma}{4\tilde{\lambda}}$ (such an index
must exist since 
$\sum_{j=1}^{\tilde{T}}\tilde{w}^*_j=\frac{\gamma}{\tilde{\lambda}}$).
Since $\frac{3\gamma}{4\tilde{\lambda}}<\sum_{j=1}^{t_0}\tilde{w}^*_j\leq t_0
\tilde{w}^*_1\leq
t_0\left(\sqrt{\gamma}+\sqrt{2\gamma^{3/2}/\tilde{\lambda}}\right)$ by 
\eqref{eq:w1upbound}, it follows
that
\[
t_0 ~\geq~ 
\frac{3\gamma}{4\tilde{\lambda}\left(\sqrt{\gamma}+\sqrt{2\gamma^{3/2}/\tilde{\lambda}}\right)}~=~
\frac{3\sqrt{\gamma}}{4\left(\tilde{\lambda}+\sqrt{2\gamma^{1/2}\tilde{\lambda}}\right)}~.
\]
According to \eqref{eq:lowb1} and the fact that $\tilde{w}^*_{t_0}\geq 0$, 
it
follows that
\[
0\leq \tilde{w}^*_{t_0}\leq \tilde{w}^*_1-\frac{(t_0-1)\sqrt{\gamma}}{2},
\]
and hence
\[
\tilde{w}^*_1 ~\geq~ \frac{(t_0-1)\sqrt{\gamma}}{2}~\geq~
\frac{3\gamma}{8(\tilde{\lambda}+\sqrt{2\gamma^{1/2}\tilde{\lambda}})}-\frac{\sqrt{\gamma}}{2}.
\]
Using this and \eqref{eq:lowb0}, it follows that for all $t\leq \tilde{T}$,
\[
\tilde{w}^*_t \geq
\frac{3\gamma}{8(\tilde{\lambda}+\sqrt{2\gamma^{1/2}\tilde{\lambda}})}+\sqrt{\gamma}\left(\frac{1}{2}-t\right)~.
\]
Since we assumed $\gamma\geq 10^4(\lambda/\mu_2)^2 > (12\lambda/\mu_2)^2 = 
\tilde{\lambda}^2$, we have $\tilde{\lambda}< 
\sqrt{\gamma^{1/2}\tilde{\lambda}}$, so the above can be lower bounded by the 
simpler expression $\gamma^{3/4}/7\sqrt{\tilde{\lambda}}+\sqrt{\gamma}(1/2-t)$. 
Since we also know that $\tilde{w}^*_t$ is non-negative, the result follows.

\end{proof}

\begin{lemma}\label{lem:phase2}
There exists  an index $t_0 \leq \tilde{T}/2$ such that
\[
\tilde{w}^*_{t_0+k} ~\geq~ 9\tilde{\lambda}\cdot(18)^{-2^k}~~~~\forall k\in 
\{0,1,\ldots,\tilde{T}-t_0\}
\]
\end{lemma}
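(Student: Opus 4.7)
The plan is to show that once $\tilde{w}^*_t$ has dropped below a small constant multiple of $\tilde{\lambda}$, it satisfies a quadratic recursion $\tilde{w}^*_{t+1} \geq (\tilde{w}^*_t)^2/(9\tilde{\lambda})$, which upon iteration yields the claimed doubly-exponential lower bound with base $18$. I first derive the matching upper bound $\tilde{w}^*_{t+1} \leq (\tilde{w}^*_t)^2/\tilde{\lambda}$ by rewriting $\tilde{\lambda}\tilde{w}^*_{t+1} = s_t - s_{t+1} \leq s_t = (\tilde{w}^*_t-\tilde{w}^*_{t+1})^2 \leq (\tilde{w}^*_t)^2$, where $s_t := \gamma - \tilde{\lambda}\sum_{j=1}^{t}\tilde{w}^*_j$ is the quantity appearing in \lemref{lem:woptstruct}.

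Next, I bootstrap this upper bound into a tail-sum estimate: iterating $\tilde{w}^*_{t+k}\leq(\tilde{w}^*_{t+k-1})^2/\tilde{\lambda}$ gives $\tilde{w}^*_{t+k}\leq \tilde{w}^*_{t+1}\cdot(5/9)^{2^{k-1}-1}$ for every $k\geq 1$, provided $\tilde{w}^*_{t+1}\leq 5\tilde{\lambda}/9$, and summing the resulting doubly-exponentially decreasing series yields $\sum_{j>t}\tilde{w}^*_j \leq (9/4)\tilde{w}^*_{t+1}$, equivalently $s_t \leq (9\tilde{\lambda}/4)\tilde{w}^*_{t+1}$. Substituting this into the identity $\tilde{w}^*_t - \tilde{w}^*_{t+1} = \sqrt{s_t}$ from \lemref{lem:woptstruct} and using $\tilde{w}^*_{t+1}\leq \sqrt{\tilde{\lambda}\tilde{w}^*_{t+1}}$ (which follows from $\tilde{w}^*_{t+1}\leq 5\tilde{\lambda}/9<\tilde{\lambda}$) gives $\tilde{w}^*_t \leq (5/2)\sqrt{\tilde{\lambda}\tilde{w}^*_{t+1}}$, whose square is the desired lower-bound recursion $\tilde{w}^*_{t+1}\geq (4/25)(\tilde{w}^*_t)^2/\tilde{\lambda} \geq (\tilde{w}^*_t)^2/(9\tilde{\lambda})$.

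With this recursion in hand, I define $t_0$ to be the largest index with $\tilde{w}^*_{t_0} > \tilde{\lambda}/2$. The $t=1$ instance of the recursion in \lemref{lem:woptstruct}, combined with $\tilde{w}^*_2\geq 0$, yields $\tilde{w}^*_1 \geq (\sqrt{\tilde{\lambda}^2+4\gamma}-\tilde{\lambda})/2$, which exceeds $\tilde{\lambda}/2$ under the standing assumption $\gamma\geq 10^4(\lambda/\mu_2)^2$, so $t_0\geq 1$ is well defined. Monotonicity of $\tilde{w}^*$ together with $\sum_{j}\tilde{w}^*_j = \gamma/\tilde{\lambda}$ forces $t_0 < 2\gamma/\tilde{\lambda}^2$, which is at most $\tilde{T}/2$ by the lower bound $\tilde{T}\geq 16\gamma/\tilde{\lambda}^2+1$ coming from the hypothesis on $\tilde{T}$. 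The base case $k=0$ is immediate from the definition of $t_0$; for the inductive step, the recursion above applies at $t=t_0+k$ because monotonicity and the choice of $t_0$ give $\tilde{w}^*_{t_0+k+1}\leq \tilde{w}^*_{t_0+1}\leq \tilde{\lambda}/2\leq 5\tilde{\lambda}/9$, and it yields $\tilde{w}^*_{t_0+k+1}\geq (9\tilde{\lambda}/18^{2^k})^2/(9\tilde{\lambda}) = 9\tilde{\lambda}\cdot 18^{-2^{k+1}}$, closing the induction.

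The main obstacle is pinning down the lower-bound quadratic recursion with a constant tight enough to produce the base $18$: this requires using the upper-bound recursion self-referentially to control the tail sum $\sum_{j>t}\tilde{w}^*_j$, and calibrating the threshold $5\tilde{\lambda}/9$ (which both enables the geometric-sum bound and implies $\tilde{w}^*_{t+1}\leq \sqrt{\tilde{\lambda}\tilde{w}^*_{t+1}}$ in the final squaring step) so that the induction constant matches exactly the base value $\tilde{w}^*_{t_0}>\tilde{\lambda}/2$ that is afforded by the definition of $t_0$.
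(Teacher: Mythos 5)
Your proposal is correct and follows essentially the same route as the paper's proof: both derive the unconditional quadratic upper recursion $\tilde{w}^*_{t+1}\leq(\tilde{w}^*_t)^2/\tilde{\lambda}$, use it to bound the tail sum $\sum_{j>t}\tilde{w}^*_j$ by a constant multiple of $\tilde{w}^*_{t+1}$ once $\tilde{w}^*_{t+1}$ is below a constant fraction of $\tilde{\lambda}$, plug this into the reverse recursion to obtain $\tilde{w}^*_t\leq C\sqrt{\tilde{\lambda}\tilde{w}^*_{t+1}}$ with $C\leq 3$, and then iterate from the threshold index $t_0$ defined by $\tilde{w}^*_{t_0}>\tilde{\lambda}/2$, with $t_0\leq\tilde{T}/2$ following from the same counting argument. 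Your constants (threshold $5\tilde{\lambda}/9$, tail factor $9/4$, constant $5/2$ instead of $3$) and the direct use of the $t=1$ recursion to show $\tilde{w}^*_1>\tilde{\lambda}/2$ are only cosmetic variations.
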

\begin{proof}
By \lemref{lem:woptstruct}, it holds for any $t\in \{1,2,\ldots,\tilde{T}-1\}$ 
that
\begin{equation}\label{eq:reverserec}
\tilde{w}^*_{t}~=~\tilde{w}^*_{t+1}+\sqrt{\gamma-\tilde{\lambda}\sum_{j=1}^{t}\tilde{w}^*_j}
~=~\tilde{w}^*_{t+1}+\sqrt{\gamma-\tilde{\lambda}\left(\frac{\gamma}
{\tilde{\lambda}}-\sum_{j=t+1}^{\tilde{T}}\tilde{w}^*_j\right)}
~=~
\tilde{w}^*_{t+1}+\sqrt{\tilde{\lambda}\sum_{j=t+1}^{\tilde{T}}\tilde{w}^*_j}~.
\end{equation}
In particular, since $\tilde{w}^*_{j}\geq 0$ for all $j\leq \tilde{T}$, it 
follows that 
$\tilde{w}^*_t\geq
\sqrt{\tilde{\lambda}\sum_{j=t+1}^{\tilde{T}}\tilde{w}^*_j}\geq
\sqrt{\tilde{\lambda} \tilde{w}^*_{t+1}}$, and
therefore
\begin{equation}\label{eq:quadup}
\tilde{w}^*_{t+1}~\leq~ \frac{1}{\tilde{\lambda}}(\tilde{w}^*_t)^2 ~~~~\forall
t\in \{1,2,\ldots,\tilde{T}-1\}~.
\end{equation}
Let $t\leq \tilde{T}-1$ be any index such that\footnote{Such an index must 
exist: By assumption, 
$\tilde{T}\geq 
2\gamma\left(\frac{\mu_2}{6\lambda}\right)^2=\frac{2\gamma}{\tilde{\lambda}^2}$,
 so by
\lemref{lem:woptstruct}, $\frac{\gamma}{\tilde{\lambda}} 
 =\sum_{t=1}^{\tilde{T}}\tilde{w}^*_t \geq \tilde{T}\tilde{w}^*_{\tilde{T}} 
 \geq \frac{2\gamma}{\tilde{\lambda}^2}\tilde{w}^*_{\tilde{T}}$, hence 
 $\tilde{w}_{\tilde{T}}\leq \tilde{\lambda}/2$.}
$\tilde{w}^*_{t+1} \leq 
\frac{\tilde{\lambda}}{2}$.
By
\eqref{eq:quadup}, this implies that
\begin{align*}
\sum_{j=t+1}^{\tilde{T}}\tilde{w}^*_j ~\leq~
\sum_{k=0}^{\tilde{T}-t-1}\tilde{\lambda}\left(\frac{\tilde{w}^*_{t+1}}{\tilde{\lambda}}\right)^{2^{k}}~=~
\sum_{k=0}^{\tilde{T}-t-1}\tilde{w}^*_{t+1}\left(\frac{\tilde{w}^*_{t+1}}{\tilde{\lambda}}\right)^{2^{k}-1}~\leq~
\sum_{k=0}^{\tilde{T}-t-1}\tilde{w}^*_{t+1} \left(\frac{1}{2}\right)^{2^k-1} ~<~
 2\cdot \tilde{w}^*_{t+1}.
\end{align*}
Using the inequality above together with
\eqref{eq:reverserec} and the monotonicity of $\tilde{w}^*_t$, we get that for 
all
$t\leq \tilde{T}-1$ such that $\tilde{w}^*_{t+1}\leq\frac{\tilde{\lambda}}{2}$,
\begin{align*}
\tilde{w}^*_{t}
&~=~\tilde{w}^*_{t+1}+\sqrt{\tilde{\lambda}\sum_{j=t+1}^{\tilde{T}}\tilde{w}^*_j}
%~\leq~\tilde{w}^*_{t+1}+\sqrt{\tilde{\lambda}\sum_{j=t+1}^{\tilde{T}}\tilde{w}^*_j}
~\leq~\tilde{w}^*_{t+1}+\sqrt{2\tilde{\lambda} \tilde{w}^*_{t+1}}
~=~\sqrt{\tilde{w}^*_{t+1}}\left(\sqrt{\tilde{w}^*_{t+1}}+\sqrt{2\tilde{\lambda}}
\right)\\
&~\leq~\sqrt{\tilde{w}^*_{t+1}}\left(\sqrt{\frac{\tilde{\lambda}}{2}}+\sqrt{2\tilde{\lambda}}\right)
~\leq~ 3\sqrt{\tilde{\lambda} \tilde{w}^*_{t+1}}.
\end{align*}
This chain of inequalities implies that
\[
w_{t+1}\geq
\frac{(\tilde{w}^*_t)^2}{9\tilde{\lambda}}~~~~~~\forall t\in 
\{1,2,\ldots,\tilde{T}-1\}:
\tilde{w}^*_{t+1}\leq\frac{\tilde{\lambda}}{2}.
\]
Let $t_0\leq \tilde{T}/2$ denote the unique index that satisfies
$\tilde{w}^*_{t_0}>\frac{\tilde{\lambda}}{2}$, as well as
$\tilde{w}^*_{t_0+1}\leq
\frac{\tilde{\lambda}}{2}$ for all $t$ between $t_0$ and 
$\tilde{T}-1$~\footnote{Since $\tilde{w}^*_t$ monotonically decrease in $t$, 
such an index must exist: On the one hand, $\tilde{w}^*_1$ can be verified to 
be at least $\tilde{\lambda}>\tilde{\lambda}/2$ (by 
\lemref{lem:phase1} and the assumption $\gamma\geq 10^4(\lambda/\mu_2)^2$, 
hence $\gamma\geq 277\tilde{\lambda}^2$). On the other hand, if we let $t_1$ be 
the largest index $\leq \tilde{T}$ satisfying 
$\tilde{w}^*_{t_1}>\tilde{\lambda}/2$, we have by
\lemref{lem:woptstruct} that 
$\frac{\gamma}{\tilde{\lambda}} \geq \sum_{t=1}^{t_1}\tilde{w}^*_t \geq 
t_1\tilde{w}_{t_1}^*>  
\frac{t_1 \tilde{\lambda}}{2}$, which implies that $t_1 \leq 
\frac{2\gamma}{\tilde{\lambda}^2}$, which is less than $\tilde{T}/2$ by the 
assumption on $\tilde{T}$ being large enough. Therefore, $t_0$ is at most 
$\tilde{T}/2$ as well.}. 
Using the displayed inequality above, we get that for any $k\leq \tilde{T}-t_0$,
\[
\tilde{w}^*_{t_0+k} ~\geq~
\frac{(\tilde{w}^*_{t_0+k-1})^2}{9\tilde{\lambda}}~\geq~
\frac{(\tilde{w}^*_{t_0+k-2})^4}{(9\tilde{\lambda})^3}~\geq~\cdots~\geq~
9\tilde{\lambda}\left(\frac{\tilde{w}^*_{t_0}}{9\tilde{\lambda}}\right)^{2^k}
~>~
9\tilde{\lambda}\left(\frac{\tilde{\lambda}/2}{9\tilde{\lambda}}\right)^{2^k},
\]
so we get $\tilde{w}^*_{t_0+k} ~\geq~ 9\tilde{\lambda}\cdot (18)^{-2^k}$ as
required
\end{proof}

\begin{lemma}\label{lem:phase3}
	$\sum_{i=1}^{\tilde{T}}(\tilde{w}^*_i)^2 \leq 
	2\gamma^{7/4}/\tilde{\lambda}^{3/2}$
\end{lemma}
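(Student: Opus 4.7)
The plan is to combine the monotonicity of the coordinates of $\tilde{\bw}^*$ with the explicit upper bound on $\tilde{w}^*_1$ already established in the proof of \lemref{lem:phase1}, together with the sum identity $\sum_j \tilde{w}^*_j = \gamma/\tilde{\lambda}$ from \lemref{lem:woptstruct}. The key observation is that since $\tilde{w}^*_1 \geq \tilde{w}^*_2 \geq \cdots \geq \tilde{w}^*_{\tilde{T}} \geq 0$, we can bound
\[
\sum_{i=1}^{\tilde{T}} (\tilde{w}^*_i)^2 \;\leq\; \tilde{w}^*_1 \sum_{i=1}^{\tilde{T}} \tilde{w}^*_i \;=\; \tilde{w}^*_1 \cdot \frac{\gamma}{\tilde{\lambda}},
\]
reducing everything to an upper bound on $\tilde{w}^*_1$.

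Next, I would invoke the estimate $\tilde{w}^*_1 \leq \sqrt{\gamma} + \sqrt{2\gamma^{3/2}/\tilde{\lambda}}$, which was derived in \eqref{eq:w1upbound} during the proof of \lemref{lem:phase1} (this derivation used only \lemref{lem:woptstruct}, so it is available here). Substituting this in yields
\[
\sum_{i=1}^{\tilde{T}} (\tilde{w}^*_i)^2 \;\leq\; \frac{\gamma^{3/2}}{\tilde{\lambda}} + \sqrt{2}\cdot \frac{\gamma^{7/4}}{\tilde{\lambda}^{3/2}}.
\]
It then suffices to show that the first summand is bounded by $(2-\sqrt{2})\,\gamma^{7/4}/\tilde{\lambda}^{3/2}$, which is equivalent to $\tilde{\lambda}^2 \leq (2-\sqrt{2})^4\,\gamma$.

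This last inequality is where the hypothesis $\gamma \geq 10^4(\lambda/\mu_2)^2$ comes in: since $\tilde{\lambda} = 12\lambda/\mu_2$, we have $\tilde{\lambda}^2 = 144(\lambda/\mu_2)^2$, so the assumption gives $\gamma \geq (10^4/144)\,\tilde{\lambda}^2$, which comfortably exceeds $\tilde{\lambda}^2/(2-\sqrt{2})^4 \approx 8.5\,\tilde{\lambda}^2$. Combining the two estimates delivers the claim. I do not expect a real obstacle here — the only delicate point is verifying that the constants line up, but the $10^4$ factor in the hypothesis leaves ample slack.
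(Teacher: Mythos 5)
Your proposal is correct and follows essentially the same route as the paper: both bound the squared norm by $\norm{\tilde{\bw}^*}_\infty\norm{\tilde{\bw}^*}_1 = \tilde{w}^*_1\cdot\gamma/\tilde{\lambda}$, invoke \eqref{eq:w1upbound} for $\tilde{w}^*_1$, and then absorb the lower-order term $\gamma^{3/2}/\tilde{\lambda}$ using the assumption $\gamma\geq 10^4(\lambda/\mu_2)^2$ (your split of the constant $2$ as $(2-\sqrt{2})+\sqrt{2}$ versus the paper's bounding of the additive $1$ is only a cosmetic difference, and your constant accounting is in fact cleaner than the paper's).
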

\begin{proof}
	We need to upper bound the squared Euclidean norm of 
	$(\tilde{w}^*_1,\ldots,\tilde{w}^*_{\tilde{T}})$. Note that for any vector 
	$\bw$, we have $\norm{\bw}^2=\sum_i w_i^2 \leq (\max_i |w_i|)\sum_i |w_i| = 
	\norm{\bw}_{\infty}\norm{\bw}_1$. Thus, by \lemref{lem:woptstruct}, 
	\eqref{eq:w1upbound}, and the assumption that $\gamma\geq 
	10^4(\lambda/\mu_2)^2>277\tilde{\lambda}^2$, the squared norm is at most
\[
\left(\sqrt{\gamma}+\sqrt{\frac{2\gamma^{3/2}}{\tilde{\lambda}}}\right)\cdot 
\frac{\gamma}{\tilde{\lambda}}
~=~\left(1+\sqrt{\frac{2\gamma^{1/2}}{\tilde{\lambda}}}\right)\cdot 
\frac{\gamma^{3/2}}{\tilde{\lambda}}
~\leq~\left(\sqrt{\frac{\gamma^{1/2}}{\sqrt{277}\tilde{\lambda}}}+\sqrt{\frac{2\gamma^{1/2}}{\tilde{\lambda}}}\right)\cdot
 \frac{\gamma^{3/2}}{\tilde{\lambda}},
\]
which is at most $2\sqrt{\gamma^{1/2}/\tilde{\lambda}}\cdot 
\gamma^{3/2}/\tilde{\lambda} = 2\gamma^{7/4}/\tilde{\lambda}^{3/2}$
\end{proof}

\begin{lemma}\label{lem:phase4}
$\bw^*=\arg\min_{\bw} f(\bw)$ satisfies $\inner{\bv_i,\bw^*}=\tilde{w}_i^*$ for
all $i=1,\ldots,\tilde{T}$, where $\tilde{\bw}^*=\arg\min_{\bw} \tilde{f}(\bw)$.
\end{lemma}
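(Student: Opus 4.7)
The approach is to decompose $\bw$ along and perpendicular to the span of the orthonormal vectors $\bv_1,\dots,\bv_{\tilde T}$, reduce the problem to a finite-dimensional strongly convex optimization in the coordinates $a_i=\inner{\bv_i,\bw}$, and then show that at the candidate minimizer $\tilde\bw^*$ only the ``cubic'' branch of $g$ is active, so that the first-order conditions for $f$ coincide with those for $\tilde f$.

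First I would write any $\bw\in\reals^d$ as $\bw=\sum_{i=1}^{\tilde T}a_i\bv_i+\bw_\perp$, where $\bw_\perp$ is orthogonal to $\mathrm{span}(\bv_1,\dots,\bv_{\tilde T})$. The arguments $\inner{\bv_i,\bw}-\inner{\bv_{i+1},\bw}=a_i-a_{i+1}$ and $\inner{\bv_1,\bw}=a_1$ appearing in $f$ depend only on the $a_i$'s, while $\norm{\bw}^2=\sum_i a_i^2+\norm{\bw_\perp}^2$. Hence $f(\bw)=f(\bw-\bw_\perp)+\tfrac{\lambda}{2}\norm{\bw_\perp}^2$, and since $f$ is strongly convex and has a unique minimizer, we must have $\bw_\perp=0$ at $\bw^*$; that is, $\bw^*=\sum_i a_i^*\bv_i$ with $a_i^*=\inner{\bv_i,\bw^*}$.

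Restricting $f$ to the span and absorbing the positive factor $\mu_2/12$ reduces the problem to minimizing
\[
h(\ba)=\sum_{i=1}^{\tilde T-1}g(a_i-a_{i+1})+\frac{\tilde\lambda}{2}\sum_{i=1}^{\tilde T}a_i^2-\gamma a_1
\]
over $\ba=(a_1,\dots,a_{\tilde T})\in\reals^{\tilde T}$. Since $g$ is convex (one verifies $g''\ge 0$ directly from the piecewise definition) and the quadratic term is $\tilde\lambda$-strongly convex, $h$ is strongly convex and has a unique minimizer. The lemma will follow once I identify this minimizer with $\tilde\bw^*=\arg\min\tilde f$.

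To establish the identification I would verify $\nabla h(\tilde\bw^*)=0$. By \lemref{lem:woptstruct} the neighboring gaps satisfy $\tilde w_t^*-\tilde w_{t+1}^*=\sqrt{\gamma-\tilde\lambda\sum_{j=1}^{t}\tilde w_j^*}\in[0,\sqrt\gamma]$, and the hypothesis $\Delta\ge\sqrt\gamma$ therefore yields $|\tilde w_i^*-\tilde w_{i+1}^*|\le\Delta$ for every $i$. On this regime $g$ agrees with $\tfrac{1}{3}|\cdot|^3$ (and the two pieces of $g$ match in value and first derivative at $|x|=\Delta$ by construction), so $h$ and $\tilde f$ have the same partial derivatives at $\tilde\bw^*$. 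Since $\nabla\tilde f(\tilde\bw^*)=0$ by optimality of $\tilde\bw^*$ for $\tilde f$, we conclude $\nabla h(\tilde\bw^*)=0$, and strong convexity of $h$ gives $a_i^*=\tilde w_i^*$ as claimed. The only non-routine ingredient is the uniform bound $|\tilde w_i^*-\tilde w_{i+1}^*|\le\Delta$, which is exactly where the hypothesis $\Delta\ge\sqrt\gamma$ enters; everything else is standard convex-analysis bookkeeping and requires no closed form for $\tilde\bw^*$ beyond what \lemref{lem:woptstruct} already provides.
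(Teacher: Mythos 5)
Your proof is correct and follows essentially the same route as the paper's: the key step in both is that \lemref{lem:woptstruct} together with $\Delta\geq\sqrt{\gamma}$ forces all gaps $\tilde{w}^*_i-\tilde{w}^*_{i+1}$ into $[0,\sqrt{\gamma}]\subseteq[-\Delta,\Delta]$, where $g$ coincides with $\tfrac{1}{3}|\cdot|^3$, so the gradients of the two objectives agree at $\tilde{\bw}^*$ and strong convexity identifies the minimizers after rescaling by $\mu_2/12$. The only cosmetic difference is that you fold the ``minimizer lies in the span of $\bv_1,\ldots,\bv_{\tilde{T}}$'' step into this argument via the explicit $\bw_\perp$ decomposition, whereas the paper defers it to \lemref{lem:phase5}.
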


\begin{proof}
First, we argue that $\tilde{\bw}^*$, whic minimizes
\[
\tilde{f}(\bw) =
\frac{1}{3}\sum_{i=1}^{\tilde{T}-1}|w_i-w_{i+1}|^3+\frac{\tilde{\lambda}}{2}\norm{\bw}^2-\gamma\cdot
 w_1~,
\]
also minimizes
\[
\hat{f}(\bw) =
\sum_{i=1}^{\tilde{T}-1}g(w_i-w_{i+1})+\frac{\tilde{\lambda}}{2}\norm{\bw}^2-\gamma\cdot
 w_1~.
\]
To see this, note that $\tilde{f}$ and $\hat{f}$ differ
only in that $g(x)$ is replaced by $\frac{1}{3}|x|^3$. By definition of $g$,
we have that $g(x)$ and $\frac{1}{3}|x|^3$ coincide for any $|x|\leq \Delta$, from
which it is easily verified that $f$ and $\tilde{f}$ have the same values and
gradients at any $\bw$ for which $|w_i-w_{i+1}|\leq \Delta$ for all $i\leq \tilde{T}-1$. By
\lemref{lem:woptstruct} and the assumption $\Delta\geq \sqrt{\gamma}$, 
the global minimizer $\tilde{\bw}^*$ of $\tilde{f}$
belongs to this set, and therefore $\nabla \tilde{f}(\tilde{\bw}^*)=\nabla
\hat{f}(\tilde{\bw}^*)=\mathbf{0}$. But $\hat{f}$ is strongly convex, hence has
a unique point (the global minimizer) at which the gradient of $\hat{f}$ is 
zero,
hence $\tilde{\bw}^*$ is indeed the global minimizer of $\hat{f}$.

Next, since the global minimizer is invariant to multiplying the function by a
fixed positive factor, we get that
$\tilde{\bw}^*$ is also the global minimizer of
\begin{align*}
\frac{\mu_2}{12}\hat{f}(\bw) &=
\frac{\mu_2}{12}\left(\sum_{i=1}^{\tilde{T}-1}g(w_i-w_{i+1})+
\frac{\tilde{\lambda}}{2}\norm{\bw}^2-\gamma\cdot w_1\right)\\
&=
\frac{\mu_2}{12}\left(\sum_{i=1}^{\tilde{T}-1}g(w_i-w_{i+1})-\gamma\cdot 
w_1\right)+\frac{\lambda}{2}\norm{\bw}^2,
\end{align*}
where in the last step we used the fact that $\tilde{\lambda}=12\lambda/\mu_2$.
Recalling that
\[
f(\bw) =
\frac{\mu}{12}\left(\sum_{i=1}^{\tilde{T}-1}g(\inner{\bv_i,\bw}-\inner{\bv_{i+1},\bw})
-\inner{\bv_1,\bw}\right)+\frac{\lambda}{2}\norm{\bw}^2,
\]
and that $\bv_1,\bv_2,\ldots$ are orthogonal, we can write $f(\bw)$ as
$\frac{\mu}{12}\cdot
\hat{f}(V\bw)$, where $V$ is any orthogonal matrix with the first $\tilde{T}$ 
columns being $\bv_1,\ldots,\bv_{\tilde{T}}$.
Therefore, the
minimizer $\bw^*$ of $f$ satisfies $V\bw^* =(\inner{\bv_1,\bw^*},\inner{\bv_2,\bw_2^*},\ldots)= \tilde{\bw}^*$.
\end{proof}

\begin{lemma}\label{lem:phase5}
$\norm{\bw^*}^2 = \sum_{i=1}^{\tilde{T}}\inner{\bv_i,\bw^*}^2$
\end{lemma}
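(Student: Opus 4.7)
The plan is to exploit the fact that $f$ depends on $\bw$ only through the inner products with $\bv_1,\ldots,\bv_{\tilde T}$ outside of the quadratic regularizer, and then use Pythagoras on the orthonormal system $\{\bv_i\}$. Concretely, I would write
\[
f(\bw) ~=~ h\bigl(\inner{\bv_1,\bw},\ldots,\inner{\bv_{\tilde T},\bw}\bigr) + \tfrac{\lambda}{2}\norm{\bw}^2,
\]
where $h$ collects the $g$--terms and the linear term $-\gamma\inner{\bv_1,\bw}$ (scaled by $\mu_2/12$). The key observation is that $h$ is invariant under changing $\bw$ in directions orthogonal to $\mathrm{span}(\bv_1,\ldots,\bv_{\tilde T})$, whereas the quadratic regularizer strictly penalizes any such orthogonal component.

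First I would decompose $\bw^*=\bw_\parallel + \bw_\perp$ with $\bw_\parallel\in\mathrm{span}(\bv_1,\ldots,\bv_{\tilde T})$ and $\bw_\perp$ in the orthogonal complement, so that $\inner{\bv_i,\bw^*}=\inner{\bv_i,\bw_\parallel}$ for every $i\le\tilde T$. By Pythagoras, $\norm{\bw^*}^2=\norm{\bw_\parallel}^2+\norm{\bw_\perp}^2$, and from the decomposition of $f$ this yields
\[
f(\bw^*) ~=~ h\bigl(\inner{\bv_1,\bw_\parallel},\ldots,\inner{\bv_{\tilde T},\bw_\parallel}\bigr) + \tfrac{\lambda}{2}\norm{\bw_\parallel}^2 + \tfrac{\lambda}{2}\norm{\bw_\perp}^2 ~=~ f(\bw_\parallel) + \tfrac{\lambda}{2}\norm{\bw_\perp}^2.
\]
If $\bw_\perp\neq\mathbf 0$, this contradicts the minimality of $\bw^*$ (using that $\lambda>0$, and that the minimizer is unique by strong convexity, as already invoked in \lemref{lem:phase4}). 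Hence $\bw_\perp=\mathbf 0$ and $\bw^*\in\mathrm{span}(\bv_1,\ldots,\bv_{\tilde T})$.

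Finally, since $\bv_1,\ldots,\bv_{\tilde T}$ are orthonormal and $\bw^*$ lies in their span, Parseval's identity gives
\[
\norm{\bw^*}^2 ~=~ \sum_{i=1}^{\tilde T}\inner{\bv_i,\bw^*}^2,
\]
which is exactly the claim. There is no real obstacle here: the only thing to be careful about is making explicit that the non-quadratic part of $f$ depends on $\bw$ purely through $\inner{\bv_i,\bw}$ for $i\le\tilde T$, so that any orthogonal component is pure overhead penalized by the $\frac{\lambda}{2}\norm{\bw}^2$ term. Together with \lemref{lem:phase3} and \lemref{lem:phase4}, this also delivers the bound $\norm{\bw^*}^2\le 2\gamma^{7/4}/(12\lambda/\mu_2)^{3/2}$ claimed in Proposition~\ref{prop:opt}.
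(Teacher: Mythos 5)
Your proposal is correct and follows essentially the same route as the paper: the paper also writes $f$ as $h(\inner{\bv_1,\bw},\ldots,\inner{\bv_{\tilde T},\bw})+\frac{\lambda}{2}\norm{\bw}^2$, concludes that $\bw^*$ lies in $\mathrm{span}(\bv_1,\ldots,\bv_{\tilde T})$ (citing the Representer theorem, where you instead prove this step directly via the orthogonal decomposition and the strict penalty on $\bw_\perp$), and then expands $\bw^*$ in the orthonormal system to get the claimed identity. Your explicit argument for the span-membership step is a fine, self-contained substitute for the citation.
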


\begin{proof}
$f(\bw)$ is a function which can be written in the form 
$h(\inner{\bv_1,\bw},\inner{\bv_2,\bw},\ldots,\inner{\bv_{\tilde{T}},\bw})+\frac{\lambda}{2}\norm{\bw}^2$,
so by the Representer theorem, its minimizer $\bw^*$ must lie in the span of 
$\bv_1,\bv_2,\ldots,\bv_{\tilde{T}}$. Moreover, since these vectors are 
orthogonal and of unit 
norm, we have $\bw^* = \sum_{i=1}^{\tilde{T}}\inner{\bv_i,\bw^*}\bv_i$, and thus
\[
\norm{\bw^*}^2 = 
\left\langle\sum_{i=1}^{\tilde{T}}\inner{\bv_i,\bw^*}\bv_i,\sum_{j=1}^{\tilde{T}}
\inner{\bv_j,\bw^*}\bv_j\right\rangle = 
\sum_{i,j=1}^{\tilde{T}}\inner{\bv_i,\bw^*} 
\inner{\bv_j,\bw^*}\inner{\bv_i,\bv_j}=\sum_{i=1}^{\tilde{T}}\inner{\bv_i,\bw^*}^2.
\]
\end{proof}

\subsection{Oracle Complexity Lower Bound}\label{subsec:mainproof}

In this subsection, we prove the following oracle complexity lower bound, 
depending on the free parameter $\gamma$: 

\begin{proposition}\label{prop:lowbound}
Assume that $\epsilon < 
\min\left\{\frac{108^2\cdot\lambda^3}{\mu_2^2},\frac{\gamma\lambda}{8}\right\}$. 
Under the conditions of \propref{prop:opt}, it is possible to choose the 
vectors $\bv_1,\bv_2,\ldots,\bv_{\tilde{T}}$ in the function $f$, such that the 
number of iterations $T$ required to have
$f(\bw_T)-f(\bw^*)\leq \epsilon$ is at least
\[
\max\left\{\frac{\gamma^{1/4}}{7\sqrt{12\lambda/\mu_2}}~,~\log_2  
\log_{18}\left(\frac{108^2\cdot\lambda^3 
}{\mu_2^2\epsilon}\right) -1\right\}
\]
\end{proposition}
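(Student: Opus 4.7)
The plan is to adapt the classical Nesterov-style resisting-oracle argument to the second-order setting, leveraging the crucial identity $g(0)=g'(0)=g''(0)=0$ enjoyed by $g(x)=\tfrac{1}{3}|x|^{3}$ near the origin. I would select the orthonormal vectors $\bv_{1},\ldots,\bv_{\tilde T}$ adversarially in response to the algorithm's queries, maintaining the invariant that $\inner{\bv_{i},\bw_{t}}=0$ for every $i\geq t$. The inductive step rests on the observation that whenever $\bw_{t}$ is orthogonal to $\bv_{t},\ldots,\bv_{\tilde T}$, each summand $g(\inner{\bv_{i},\bw_{t}}-\inner{\bv_{i+1},\bw_{t}})$ with $i\geq t$ is evaluated at zero, so its gradient and Hessian contributions along $\bv_{t+1},\ldots,\bv_{\tilde T}$ involve only $g'(0)$ or $g''(0)$ and hence vanish. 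Thus the oracle's response at $\bw_{t}$ is consistent with any orthonormal completion of $\bv_{1},\ldots,\bv_{t-1}$, and the dimension assumption $d\geq 2\tilde T$ lets us pick $\bv_{t}$ orthogonal to both $\bv_{1},\ldots,\bv_{t-1}$ and the first $t$ queries, preserving the invariant. I expect formalizing this chain invariant to be the main obstacle, in particular the standard ``WLOG'' reduction allowing us to restrict attention to algorithms whose iterates are consistent with the adversary's choices.

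Given the invariant, Lemma~\ref{lem:phase5} places $\bw^{*}\in\mathrm{span}(\bv_{1},\ldots,\bv_{\tilde T})$, and $\lambda$-strong convexity yields
\[
f(\bw_{T})-f(\bw^{*})\;\geq\;\frac{\lambda}{2}\|\bw_{T}-\bw^{*}\|^{2}\;\geq\;\frac{\lambda}{2}\sum_{i=T}^{\tilde T}\inner{\bv_{i},\bw^{*}}^{2},
\]
so both terms of the lower bound reduce to bounding a single summand $\inner{\bv_{i},\bw^{*}}^{2}$ for a well-chosen $i\geq T$. For the $\gamma^{1/4}/(7\sqrt{12\lambda/\mu_{2}})$ term, I would take $i=T$ and invoke Proposition~\ref{prop:opt}(1): if $T\leq\gamma^{1/4}/(7\sqrt{\tilde\lambda})$, the two pieces in the $\max\{0,\cdot\}$ expression combine to give $\inner{\bv_{T},\bw^{*}}\geq\sqrt{\gamma}/2$, whence $f(\bw_{T})-f(\bw^{*})\geq\lambda\gamma/8$, contradicting the standing hypothesis $\epsilon<\gamma\lambda/8$.

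For the doubly-logarithmic term, I would apply Proposition~\ref{prop:opt}(2) with $k:=\max\{0,\,T-t_{0}\}$, so that $i:=t_{0}+k\geq T$ and the chain invariant still forces $\inner{\bv_{i},\bw_{T}}=0$. Combining with part~(2),
\[
f(\bw_{T})-f(\bw^{*})\;\geq\;\frac{\lambda}{2}\left(\frac{108\lambda}{\mu_{2}}\right)^{\!2}18^{-2^{k+1}}.
\]
Setting this at most $\epsilon$, taking $\log_{18}$ and then $\log_{2}$, and using $t_{0}\geq 1$ together with $\epsilon<108^{2}\lambda^{3}/\mu_{2}^{2}$ to absorb an $O(1)$ slack, yields $T\geq\log_{2}\log_{18}(108^{2}\lambda^{3}/(\mu_{2}^{2}\epsilon))-1$. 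Since both lower bounds must hold simultaneously, the maximum of the two is a valid lower bound on $T$, establishing the proposition.
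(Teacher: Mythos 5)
Your proposal is correct and follows essentially the same route as the paper: the resisting-oracle choice of $\bv_1,\ldots,\bv_{\tilde T}$ exploiting $g(0)=g'(0)=g''(0)=0$ (the paper's \lemref{lem:info}), followed by strong convexity combined with parts (1) and (2) of \propref{prop:opt}. The only differences are cosmetic — you use the coordinate $t_0+\max\{0,T-t_0\}$ (requiring the small extra case $T\le t_0$ and $t_0\ge 1$, plus $T\ge 1$ to absorb the constant slack) where the paper simply takes $t_0+T$, and your first-term contradiction is a slightly streamlined version of the paper's.
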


To prove the theorem, we will need the following key lemma, which establishes 
that oracle information at certain points $\bw$ do not leak any information on 
some of the $\bv_1,\bv_2,\ldots$ vectors. 

\begin{lemma}\label{lem:info}
	For any $\bw\in \reals^d$ orthogonal to 
	$\bv_t,\bv_{t+1},\ldots,\bv_{\tilde{T}}$ , it holds that $f(\bw), \nabla 
	f(\bw),\nabla^2 f(\bw)$ do not depend on 
	$\bv_{t+1},\bv_{t+2},\ldots,\bv_{\tilde{T}}$.
\end{lemma}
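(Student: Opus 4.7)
The plan is to exploit the ``hidden information'' structure that was the whole point of replacing $x\mapsto x^2$ with $x\mapsto |x|^{k+1}$: the building block $g$ is constructed so that $g(0)=g'(0)=g''(0)=0$, which makes every summand indexed by $i\ge t$ contribute nothing to the value, gradient, or Hessian when evaluated at a $\bw$ that annihilates $\bv_t,\ldots,\bv_{\tilde T}$.

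First, I would verify the crucial smoothness facts about $g$ at the origin. Since $g(x)=\tfrac13 |x|^3$ for $|x|\le \Delta$, direct differentiation gives $g(0)=0$, $g'(0)=0$, and $g''(0)=0$, and by the construction of $g$ it is $C^2$ so these derivatives are unambiguous. Next, I would record the simple observation that if $\bw$ is orthogonal to $\bv_t,\bv_{t+1},\ldots,\bv_{\tilde T}$, then $\inner{\bv_s,\bw}=0$ for every $s\ge t$. Consequently, for every index $i\ge t$ in the outer sum defining $f$, the argument $\inner{\bv_i,\bw}-\inner{\bv_{i+1},\bw}$ equals $0-0=0$.

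Now I would differentiate $f$ term by term. Writing
\[
\nabla f(\bw)=\frac{\mu_2}{12}\sum_{i=1}^{\tilde T-1} g'\!\bigl(\inner{\bv_i,\bw}-\inner{\bv_{i+1},\bw}\bigr)(\bv_i-\bv_{i+1})-\frac{\mu_2\gamma}{12}\bv_1+\lambda\bw,
\]
and
\[
\nabla^2 f(\bw)=\frac{\mu_2}{12}\sum_{i=1}^{\tilde T-1} g''\!\bigl(\inner{\bv_i,\bw}-\inner{\bv_{i+1},\bw}\bigr)(\bv_i-\bv_{i+1})(\bv_i-\bv_{i+1})^\top+\lambda I,
\]
I would then plug in the vanishing arguments: for every $i\ge t$ the coefficients $g(\cdot)$, $g'(\cdot)$, and $g''(\cdot)$ all equal zero, so the entire $i$-th summand drops out in each of the three expressions. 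What remains are only the summands with $i\le t-1$, together with the $-\tfrac{\mu_2\gamma}{12}\inner{\bv_1,\bw}$ and $\tfrac{\lambda}{2}\norm{\bw}^2$ contributions. The surviving summands involve only $\bv_1,\ldots,\bv_t$ (since the largest index appearing is $i+1=t$ when $i=t-1$), and the linear and quadratic terms involve only $\bv_1$ and $\bw$ itself. In particular, $\bv_{t+1},\ldots,\bv_{\tilde T}$ never appear, which is exactly the conclusion of the lemma.

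There is no real obstacle here: the argument is essentially a bookkeeping check, and the only substantive ingredient is the vanishing of $g$ and its first two derivatives at the origin, which is the very feature the construction was designed to have. The only small thing to be careful about is the boundary index $i=t-1$, whose contribution is \emph{allowed} to depend on $\bv_t$ (the lemma explicitly permits dependence on $\bv_t$); so I would not try to prove anything stronger than what is claimed.
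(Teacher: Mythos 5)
Your proposal is correct and follows essentially the same argument as the paper: write out $f$, $\nabla f$, $\nabla^2 f$ term by term, use $g(0)=g'(0)=g''(0)=0$ together with the orthogonality hypothesis to kill every summand with $i\ge t$, and observe that the surviving terms (including the boundary summand $i=t-1$, which may depend on $\bv_t$ but on nothing later) involve only $\bv_1,\ldots,\bv_t$. The handling of the regularization term and the linear term matches the paper's bookkeeping as well.
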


\begin{proof}
Since the regularization term $\frac{\lambda}{2}\norm{\bw}^2$ doesn't depend on 
$\bv_{t+1},\bv_{t+2},\ldots,\bv_{\tilde{T}}$ we can define $h(\bw) \triangleq 
f(\bw)-\frac{\lambda}{2}\norm{\bw}^2$
and prove the result on $h(\bw)$.
Using the definition of $h$ and differentiating, we have that

\[
 h(\bw) =\frac{\mu_2}{12} 
 \left(\sum_{i=1}^{\tilde{T}-1}g(\inner{\bv_i-\bv_{i+1},\bw}) - 
 \gamma\inner{\bv_1,\bw}\right)
 \]
 \[
 \nabla h(\bw) =\frac{\mu_2}{12} 
 \left(\sum_{i=1}^{\tilde{T}-1}g'(\inner{\bv_i-\bv_{i+1},\bw})(\bv_{i}-\bv_{i+1})
 - \gamma\bv_1 \right)
 \]
 \[
 \nabla^2 h(\bw) =\frac{\mu_2}{12} \left( 
 \sum_{i=1}^{\tilde{T}-1}g''(\inner{\bv_i-\bv_{i+1},\bw})(\bv_{i}-\bv_{i+1})(\bv_{i}-\bv_{i+1})^T
  \right)
 \]
 By the assumption $\inner{\bv_{t},\bw}=\inner{\bv_{t+1},\bw}=\ldots=0$,  and 
 the fact that $g(0)=g'(0)=g''(0)=0$, we have that 
 $g(\inner{\bv_i-\bv_{i+1},\bw})=g'(\inner{\bv_i-\bv_{i+1},\bw})=g''(\inner{\bv_i-\bv_{i+1},\bw})=0$
  for all $i\in \{t,t+1,\ldots,\tilde{T}-1\}$. Therefore, it is easily verified 
 that the expressions above indeed do not depend on 
 $\bv_{t+1},\bv_{t+2},\ldots,\bv_{\tilde{T}}$.
\end{proof}

Let us now fix any number of iterations $T\leq \tilde{T}$. Using the previous 
results, we can 
provide a way to pick $\bv_1,\bv_2,\ldots,\bv_{\tilde{T}}$ 
for any deterministic algorithm, such that we can provide a lower bound for the 
number of second-order oracle calls.

\begin{itemize}
	\item First, we compute $\bw_1$ (which is possible since the algorithm is 
	deterministic and $\bw_1$ is chosen before any oracle calls are made).
	\item We pick $\bv_1$ to be some unit vector orthogonal to $\bw_1$. 
	Assuming $\bv_2,\ldots,\bv_{\tilde{T}}$ will also be orthogonal to $\bw_1$ 
	(which 
	will be ensured by the construction which follows), we have by 
	\lemref{lem:info} that the information $F(\bw_1),\nabla F(\bw_1),\nabla^2 
	F(\bw_1)$ provided by the oracle to the algorithm does not depend on 
	$\{\bv_2,\ldots,\bv_{\tilde{T}}\}$, and thus depends only on $\bv_1$ which 
	was 
	already fixed. Since the algorithm is deterministic, this fixes the next 
	query point $\bw_2$.
	\item For $t=2,3,\ldots,T-1$, we repeat the process above: We compute 
	$\bw_t$, and pick $\bv_{t}$ to be some unit vectors orthogonal to 
	$\bw_1,\bw_2,\ldots,\bw_t$, as well as all previously constructed $\bv$'s 
	(this is always possible since the dimension is sufficiently large). By 
	\lemref{lem:info}, as long as all vectors thus constructed are orthogonal 
	to $\bw_t$, the information $\{F(\bw_t),\nabla F(\bw_t),\nabla^2 
	F(\bw_t)\}$ provided to the algorithm does not depend on 
	$\bv_{t+1},\ldots,\bv_{\tilde{T}}$, and only depends on 
	$\bv_1,\ldots,\bv_t$ which 
	were already determined. Therefore, the next query point $\bw_{t+1}$ is 
	fixed. 
	\item At the end of the process, we pick  
	$\bv_{T},\bv_{T+1},\ldots,\bv_{\tilde{T}}$ to be some unit vectors 
	orthogonal to all previously chosen $\bv$'s as well as 
	$\bw_1,\ldots,\bw_T$ (this is possible since the dimension is large 
	enough). 
\end{itemize}

Using the facts that $\bv_1,\bv_2,\ldots,\bv_{\tilde{T}}$ are orthogonal (and 
thus 
act as an orthonormal basis to a subspace of $\reals^d$), that $\bw_T$ is 
orthogonal to $\bv_T,\bv_{T+1},\ldots\bv_{\tilde{T}}$, and that $t_0+T\leq 
\frac{\tilde{T}}{2}+\frac{\tilde{T}}{2}=\tilde{T}$ (where $t_0$ is as defined 
in \propref{prop:opt}), we have

\[
\norm{\bw_T-\bw^*}^2 \geq \sum_{i=1}^{\tilde{T}} \inner{\bv_i,\bw_T-\bw^*}^2 
\geq 
\inner{\bv_{t_0+T},\bw_T-\bw^*}^2 = 
\inner{\bv_{t_0+T},\bw^*}^2~.
\]
By \propref{prop:opt}, we can 
lower bound the above by
\[
\frac{108^2\lambda^2}{\mu_2^2}\cdot(18)^{-2^{(T+1)}}.
\]
Using the strong convexity of $f$, we therefore get
\[
f(\bw_T)-f(\bw^*) \geq \frac{\lambda}{2}\cdot 
\norm{\bw_T-\bw^*}^2 \geq 
\frac{108^2\cdot\lambda^3}{\mu_2^2} (18)^{-2^{(T+1)}}.
\]
To make the right-hand side smaller than $\epsilon$, $T$ must satisfy
\[
(18)^{-2^{(T+1)}} \leq \frac{\mu_2^2\epsilon}{108^2\cdot\lambda^3 }
\]
Which is equivalent to
\[
2^{(T+1)} \geq \log_{18}\left(\frac{108^2\cdot\lambda^3 
}{\mu_2^2\epsilon}\right)~.
\]
Assuming $\epsilon < 
\frac{108^2\cdot\lambda^3}{\mu_2^2}$, then

\[
T ~\geq~ \log_2  \log_{18}\left(\frac{108^2\cdot\lambda^3 
}{\mu_2^2\epsilon}\right) -1~.
\]

We now turn to argue that we can also lower bound $T$ by 
$\frac{\gamma^{1/4}}{7\sqrt{12\lambda/\mu_2}}$. Otherwise, suppose by 
contradiction that we can have $f(\bw_T)-f(\bw^*)\leq \epsilon$ for some
$T< \frac{\gamma^{1/4}}{7\sqrt{12\lambda/\mu_2}}$. From \propref{prop:opt} we 
know 
that 
\[
\inner{\bv_T,\bw^*} ~\geq~
\frac{\gamma^{3/4}}{7\sqrt{12\lambda/\mu_2}}
	+\sqrt{\gamma}\left(\frac{1}{2}-T\right)~,
\]
so as before, we have that
\[
\norm{\bw_T-\bw^*}^2 \geq \sum_{i=1}^{\tilde{T}} \inner{\bv_i,\bw_T-\bw^*}^2 
\geq 
\inner{\bv_{T},\bw_T-\bw^*}^2 = 
\inner{\bv_{T},\bw^*}^2,
\]
and thus
\[
f(\bw_T)-f(\bw^*) ~\geq~ 
\frac{\lambda}{2}\cdot 
\norm{\bw_T-\bw^*}^2 ~\geq~ \frac{\lambda}{2}\cdot 
\inner{\bv_T,\bw^*}^2~\geq~\frac{\lambda}{2}
\left(\frac{\gamma^{3/4}}{7\sqrt{12\lambda/\mu_2}}
	+\sqrt{\gamma}\left(\frac{1}{2}-T\right)\right)^2.
\]

To make the right-hand side smaller than $\epsilon$, $T$ must satisfy
\[
\left(\frac{\gamma^{3/4}}{7\sqrt{12\lambda/\mu_2}}
	+\sqrt{\gamma}\left(\frac{1}{2}-T\right)\right)^2
\leq \frac{2\epsilon}{\lambda},
\]
or equivalently
\[
T ~\geq~ 
\frac{\gamma^{1/4}}{7\sqrt{12\lambda/\mu_2}}+\frac{1}{2}
- \sqrt{\frac{2\epsilon}{\gamma\lambda}}.
\]
But since we assume $\epsilon < \frac{\gamma\lambda}{8}$, this is at least 
$\frac{\gamma^{1/4}}{7\sqrt{12\lambda/\mu_2}}$, contradicting our earlier 
assumption. 

Overall, we showed that $T$ is lower bounded by both 
$\frac{\gamma^{1/4}}{7\sqrt{12\lambda/\mu_2}}$, as well as $\log_2 
\log_{18}\left(\frac{108^2\cdot\lambda^3}{\mu_2^2 \epsilon}\right) -1$, 
hence
proving \propref{prop:lowbound}.

\subsection{Setting the $\gamma,\Delta$ Parameters}\label{subsec:smoothstrong}

In the following lemma, we establish the strong convexity and smoothness 
parameters of $f$ (depending on the parameter $\Delta$ which is still 
free at this point).

\begin{lemma}\label{lem:smoothstrong}
	$f$ is $\lambda$-strongly convex and twice-differentiable, with
	$\mu_2$-Lipschitz Hessians and 
	$\left(\frac{2\mu_2\Delta}{3}+\lambda\right)$-Lipschitz gradients.
\end{lemma}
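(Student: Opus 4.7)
}

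The plan is to first establish the regularity of $g$, then compute $\nabla^2 f$ in closed form, and finally bound its operator norm and its Lipschitz constant using the tridiagonal structure induced by the vectors $\bu_i := \bv_i-\bv_{i+1}$. I would begin by checking that $g$ is $C^2$ on all of $\reals$: on the interior of each branch this is immediate, and at $|x|=\Delta$ one verifies directly that the two branches agree in value ($\Delta^3/3$), first derivative ($\pm\Delta^2$), and second derivative ($2\Delta$). In fact one can write $g''(x)=2\min(|x|,\Delta)$, which is non-negative (so $g$ is convex) and globally $2$-Lipschitz in $x$. Since $f$ is a sum of $g$ composed with linear forms, a quadratic term, and a linear term, $f$ is $C^2$, with
\[
\nabla^2 f(\bw)=\frac{\mu_2}{12}\sum_{i=1}^{\tilde{T}-1}g''(\inner{\bu_i,\bw})\,\bu_i\bu_i^T+\lambda I.
\]
Because each summand is PSD, $\nabla^2 f(\bw)\succeq\lambda I$, giving $\lambda$-strong convexity.

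For the gradient-Lipschitz bound, I would bound $\|\nabla^2 f(\bw)\|_{\mathrm{op}}$ by taking an arbitrary $\bx$ and writing $y_i:=\inner{\bv_i,\bx}$, so that $\inner{\bu_i,\bx}=y_i-y_{i+1}$. Since $g''\le 2\Delta$ everywhere, and since by the orthonormality of the $\bv_i$ and the elementary inequality $(y_i-y_{i+1})^2\le 2(y_i^2+y_{i+1}^2)$ we get
\[
\sum_{i=1}^{\tilde{T}-1}(y_i-y_{i+1})^2\le 4\sum_{i=1}^{\tilde{T}}y_i^2\le 4\norm{\bx}^2,
\]
we obtain $\bx^T\nabla^2 f(\bw)\bx\le\frac{\mu_2}{12}\cdot 2\Delta\cdot 4\norm{\bx}^2+\lambda\norm{\bx}^2=\bigl(\frac{2\mu_2\Delta}{3}+\lambda\bigr)\norm{\bx}^2$, which yields the desired gradient Lipschitz constant.

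The main obstacle is the Hessian-Lipschitz estimate, since a naive bound using $g''\le 2\Delta$ is too crude. The key is to exploit that $g''$ is $2$-Lipschitz. Writing $z_i:=\inner{\bv_i,\bu-\bv}$, the same tridiagonal argument gives $\sum_i(z_i-z_{i+1})^2\le 4\norm{\bu-\bv}^2$. Setting $a_i:=g''(\inner{\bu_i,\bu})-g''(\inner{\bu_i,\bv})$, so that $|a_i|\le 2|z_i-z_{i+1}|$, one has for any unit $\bx$
\[
\bigl|\bx^T(\nabla^2 f(\bu)-\nabla^2 f(\bv))\bx\bigr|\le\frac{\mu_2}{12}\cdot 2\sum_{i}|z_i-z_{i+1}|(y_i-y_{i+1})^2,
\]
and I would apply Cauchy--Schwarz to split this as $\bigl(\sum_i(z_i-z_{i+1})^2\bigr)^{1/2}\bigl(\sum_i(y_i-y_{i+1})^4\bigr)^{1/2}$. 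The first factor is at most $2\norm{\bu-\bv}$; for the second, using $(y_i-y_{i+1})^2\le 2\norm{\bx}^2$ together with $\sum_i(y_i-y_{i+1})^2\le 4\norm{\bx}^2$ gives $\sum_i(y_i-y_{i+1})^4\le 8\norm{\bx}^4$. Combining, the operator-norm of the Hessian difference is bounded by $\frac{\mu_2}{12}\cdot 2\cdot 2\norm{\bu-\bv}\cdot 2\sqrt{2}\norm{\bx}^2=\frac{2\sqrt{2}}{3}\mu_2\norm{\bu-\bv}\norm{\bx}^2\le\mu_2\norm{\bu-\bv}\norm{\bx}^2$, completing the proof.
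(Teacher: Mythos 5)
Your proof is correct and follows essentially the same route as the paper: the closed-form Hessian $\frac{\mu_2}{12}\sum_i g''(\inner{\bv_i-\bv_{i+1},\bw})(\bv_i-\bv_{i+1})(\bv_i-\bv_{i+1})^T+\lambda I$, the identity $g''(x)=2\min\{|x|,\Delta\}$ (so $0\le g''\le 2\Delta$ and $g''$ is $2$-Lipschitz), and the tridiagonal estimate $\sum_i(y_i-y_{i+1})^2\le 4\norm{\bx}^2$ for both the strong-convexity and gradient-smoothness claims. The only deviation is in the Hessian-Lipschitz step, where you apply Cauchy--Schwarz across the sum together with the $\ell^4$ bound $\sum_i(y_i-y_{i+1})^4\le 8\norm{\bx}^4$, whereas the paper uses the uniform per-term bound $|\inner{\bv_i-\bv_{i+1},\bw-\tilde{\bw}}|\le\sqrt{2}\norm{\bw-\tilde{\bw}}$ followed by $\bigl\|\sum_i(\bv_i-\bv_{i+1})(\bv_i-\bv_{i+1})^T\bigr\|\le 4$; both give the same constant $\frac{2\sqrt{2}}{3}\mu_2<\mu_2$.
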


\begin{proof}
	Since $f$ is a sum of convex, twice-differentiable functions and the
	$\lambda$-strongly convex
	function $\frac{\lambda}{2}\norm{\bw}^2$, it is clearly $\lambda$-strongly
	convex and twice-differentiable. Thus, it only remains to calculate the 
	Lipschitz parameter of the gradients and Hessians. 
	
	To simplify the proof, we note that Lipschitz smoothness is a property
	invariant to the coordinate system used, so we can assume without loss of
	generality that $\bv_1,\bv_2,\ldots,\bv_{\tilde{T}}$ correspond to the 
	standard basis
	$\be_1,\be_2,\ldots,\be_{\tilde{T}}$, and consider the Lipschitz properties 
	of the function
	\[
	\hat{f}(\bw) =
	\frac{\mu_2}{12}\left(\sum_{i=1}^{\tilde{T}-1}g(w_i-w_{i+1})-\gamma\cdot 
	w_1\right)+\frac{\lambda}{2}\norm{\bw}^2
	\]
	
	By definition of $g$, it is easily verified that 
	\[
	g''(x) = 2\cdot 
	\min\{\Delta,|x|\}~,
	\]
	which is a $2$-Lipschitz function bounded in $[0,2\Delta]$. This implies 
	that $g'(x)$ is $2\Delta$-Lipschitz.
	Letting $\br_i:=\be_i-\be_{i+1}$, we can write $\hat{f}$ as
	\[
	\hat{f}(\bw) = 
	\frac{\mu_2}{12}\left(\sum_{i=1}^{\tilde{T}-1}g(\inner{\br_i,\bw})-\gamma\cdot
	 w_1\right)+\frac{\lambda}{2}\norm{\bw}^2.
	\]
	Differentiating twice, we get
	\[
	\nabla^2 \hat{f}(\bw) = \frac{\mu_2}{12}
	\sum_{i=1}^{\tilde{T}-1}g''(\inner{\br_i,\bw})\cdot\br_i
	\br_i^\top+\lambda I.
	\]
	Since this is a sum of positive-semidefinite matrices with non-negative
	coefficients (as we showed that $g''(x)\in [0,2\Delta]$ for all $x$), it 
	follows 
	that
	its spectral norm is at most
	\[
	\frac{\mu_2\Delta}{6}\cdot\left\|\sum_{i=1}^{\tilde{T}-1}\br_i\br_i^\top\right\|+\lambda,
	\]
	and the first term equals 
	\begin{align*}
		\frac{\mu_2\Delta}{6}\cdot 	
		\max_{\bx}\frac{\sum_{i=1}^{\tilde{T}-1}\inner{\br_i,\bx}^2}{\norm{\bx}^2}
		&~=~
		\frac{\mu_2\Delta}{6}\cdot 	
		\max_{\bx}\frac{\sum_{i=1}^{\tilde{T}-1}(x_i-x_{i+1})^2}{\norm{\bx}^2}\\
		&~\leq~
		\frac{\mu_2\Delta}{6}\cdot
		\max_{\bx}\frac{\sum_{i=1}^{\tilde{T}-1}(2x_i^2+2x_{i+1}^2)}{\sum_{i=1}^{d}x_i^2}~\leq~
		\frac{2\mu_2\Delta}{3} .
	\end{align*}
	Overall, we showed that $\norm{\nabla^2 \hat{f}(\bw)}\leq
	\frac{2\mu_2\Delta}{3}+\lambda$,
	so the gradients of $f$ are 
	$\left(\frac{2\mu_2\Delta}{3}+\lambda\right)$-Lipschitz.
	
	It remains to show that $\nabla^2 \hat{f}(\bw)$ is $\mu_2$-Lipschitz. Using 
	the
	formula for $\nabla^2 \hat{f}(\bw)$, and recalling that $g''(x)$ is
	$2$-Lipschitz, and $\norm{\br_i}=\sqrt{2}$ by definition, we have that for
	any $\bw,\tilde{\bw}$,
	\begin{align*}
		\norm{\nabla^2 \hat{f}(\bw)-\nabla^2 \hat{f}(\tilde{\bw})}
		&~=~\frac{\mu_2}{12}\cdot\left\|{\sum_{i=1}^{\tilde{T}-1}(g''(\inner{\br_i,\bw})-g''(\inner{\br_i,\tilde{\bw}}))\cdot
			\br_i\br_i^\top}\right\|\\
		&~\leq~\frac{\mu_2}{12}\cdot\left\|{\sum_{i=1}^{\tilde{T}-1}|g''(\inner{\br_i,\bw})-g''(\inner{\br_i,\tilde{\bw}})|\cdot
			\br_i\br_i^\top}\right\|\\
		&~\leq~
		\frac{\mu_2}{12}\cdot\left\|{\sum_{i=1}^{\tilde{T}-1}2|\inner{\br_i,\bw-\bw'}|\cdot
			\br_i\br_i^\top}\right\|\\
		&~\leq~
		\frac{\mu_2}{12}\cdot2\sqrt{2}\cdot\norm{\bw-\tilde{\bw}}\cdot\left\|
		\sum_{i=1}^{\tilde{T}-1}\br_i\br_i^\top\right\|.
	\end{align*}
	Using the same calculations as earlier, we have
	$\left\|\sum_{i=1}^{\tilde{T}-1}\br_i\br_i^\top\right\|\leq 4$, and 
	therefore we
	showed overall that
	\[
	\norm{\nabla^2 \hat{f}(\bw)-\nabla^2 \hat{f}(\tilde{\bw})} ~\leq~
	\frac{\mu_2\cdot 8\sqrt{2}}{12}\cdot \norm{\bw-\tilde{\bw}} < \mu_2\cdot
	\norm{\bw-\tilde{\bw}},
	\]
	hence $\nabla^2 \hat{f}(\bw)$ is $\mu_2$-Lipschitz.
\end{proof}

We now collect the ingredients necessary to fix $\gamma,\Delta$ and hence prove 
our theorem. Combining the 
previous lemma, \propref{prop:opt} and \propref{prop:lowbound}, and recalling 
that 
we want $f$ to have $\mu_1$-Lipschitz gradients and $\mu_2$-Lipschitz Hessians, 
with an optimizer $\bw^*$ satisfying $\norm{\bw^*}\leq D$, we have an oracle 
complexity lower bound of the form
\begin{equation}\label{eq:finalbound}
T~\geq~ \max\left\{\frac{\gamma^{1/4}}{7\sqrt{12\lambda/\mu_2}}~,~\log_2
\log_{18}\left(\frac{108^2\cdot\lambda^3 
}{\mu_2^2\epsilon}\right) -1\right\},
\end{equation}
assuming the following conditions:
\[
\gamma\geq 10^4\left(\frac{\lambda}{\mu_2}\right)^2~~,~~\Delta\geq 
\sqrt{\gamma}~~,~~
\epsilon < 
\min\left\{\frac{108^2\cdot\lambda^3}{\mu_2^2},\frac{\gamma\lambda}{8}\right\}~~,~~
\frac{2\mu_2\Delta}{3}+\lambda\leq \mu_1~~,~~
\sqrt{\frac{2\gamma^{7/4}}{(12\lambda/\mu_2)^{3/2}}}\leq D~.
\]
Picking $\Delta=\sqrt{\gamma}$, using the fact that $\mu_1\geq \lambda$ (as any 
$\lambda$-strongly convex function must have gradients with Lipschitz parameter 
at least $\lambda$), and 
rewriting the last two conditions, this is equivalent 
to
\[
\gamma\geq 10^4\left(\frac{\lambda}{\mu_2}\right)^2~~,~~
\epsilon < 
\min\left\{\frac{108^2\cdot\lambda^3}{\mu_2^2},\frac{\gamma\lambda}{8}\right\}~~,~~
\gamma\leq \left(\frac{3(\mu_1-\lambda)}{2\mu_2}\right)^2~~,~~
\gamma\leq \sqrt[7]{\frac{D^8(12\lambda)^6}{2^4\mu_2^6}}~.
\]
Since the first condition needs to hold anyway, we can allow ourself to make 
the second condition stronger, by substituting $10^4(\lambda/\mu_2)^2$ in lieu 
of $\gamma$ in the second condition. Doing this, simplifying, and merging the 
last two conditions, the set of condition above is implied by requiring

 \[
\gamma\geq 10^4\left(\frac{\lambda}{\mu_2}\right)^2~~,~~
\epsilon < \frac{10^4 \lambda^3}{8\mu_2^2}
~~,~~
\gamma\leq \min\left\{\left(\frac{3(\mu_1-\lambda)}{2\mu_2}\right)^2~,~
\sqrt[7]{\frac{D^8(12\lambda)^6}{2^4\mu_2^6}}\right\}~.
\]
Clearly, to make the lower bound in \eqref{eq:finalbound} as large as possible, 
we should
pick the largest possible $\gamma$, namely $
\gamma=\min\left\{\left(\frac{3(\mu_1-\lambda)}{2\mu_2}\right)^2,
\sqrt[7]{\frac{D^8(12\lambda)^6}{2^4\mu_2^6}}\right\}$, 
and to ensure that the other conditions hold, require that
\[
\min\left\{\left(\frac{3(\mu_1-\lambda)}{2\mu_2}\right)^2,
\sqrt[7]{\frac{D^8(12\lambda)^6}{2^4\mu_2^6}}\right\}
~\geq~ 10^4\left(\frac{\lambda}{\mu_2}\right)^2~~,~~
\epsilon < \frac{10^4 \lambda^3}{8\mu_2^2}.
\]
Simplifying a bit, these two conditions are implied by requiring
\begin{equation}\label{eq:finalconditions}
\frac{\mu_1}{\lambda}\geq 68~~,~~\frac{\mu_2}{\lambda}D\geq 694~~,~~ \epsilon < \frac{10^4 
\lambda^3}{8\mu_2^2},
\end{equation}

Finally, let us plug our choice of 
$\gamma=\min\left\{\left(\frac{3(\mu_1-\lambda)}{2\mu_2}\right)^2,
\sqrt[7]{\frac{D^8(12\lambda)^6}{2^4\mu_2^6}}\right\}$ into the lower bound in 
\eqref{eq:finalbound}. We thus get an oracle complexity lower bound of
\begin{align*}
&\max\left\{\min\left\{\frac{\sqrt{\mu_2}}{7\sqrt{12\lambda}}\cdot\sqrt{\frac{3(\mu_1-\lambda)}
{2\mu_2}}~,~\frac{\sqrt{\mu_2}}{7\sqrt{12\lambda}}\cdot 
\frac{D^{2/7}(12\lambda)^{3/14}}{2^{1/7}\mu_2^{3/14}}\right\}~,~
\log_2
\log_{18}\left(\frac{108^2\cdot\lambda^3 
}{\mu_2^2\epsilon}\right) -1
\right\}\\
&=
\max\left\{\min\left\{\frac{1}{14}\sqrt{\frac{\mu_1-\lambda}{2\lambda}}~,~
\frac{(D\mu_2/12\lambda)^{2/7}}{7\cdot 2^{1/7}}\right\}~,~
\log_2
\log_{18}\left(\frac{108^2\cdot\lambda^3 
}{\mu_2^2\epsilon}\right) -1
\right\},
\end{align*}
under the conditions of \eqref{eq:finalconditions}. 

To simplify the bound a bit, we note that we can lower bound 
$\mu_1-\lambda$ by $\frac{67}{68}\mu_1$ (possible by 
\eqref{eq:finalconditions}), 
and lower bound $\log_2
\log_{18}\left(\frac{108^2\cdot\lambda^3 }{\mu_2^2\epsilon}\right) -1$ by
$\frac{1}{2}\log\log_{18}\left(\frac{\lambda^3}{\mu_2^2\epsilon}\right)$, by 
assuming that 
$\epsilon\leq c\lambda^3/\mu_2^2$ for some small enough $c$ 
(in other words, increasing the constant
in the third condition in \eqref{eq:finalconditions}). Finally, using the fact 
that $\max\{a,b\}\geq (a+b)/2$, the result in the theorem follows.

\section{Proof of \thmref{thm:main2}}

Similarly to the strongly convex case, we will assume without loss of 
generality that the algorithm initializes at 
$\bw_1=\mathbf{0}$, since otherwise one can simply replace the
``hard'' function $f(\bw)$ below by $f(\bw-\bw_1)$, and the same proof holds 
verbatim. Thus, the theorem requires that our function has a minimizer $\bw^*$ 
satisfying $\norm{\bw^*}\leq D$.

Define $g:\reals\mapsto\reals$ as
\[
g(x) = \begin{cases} \frac{1}{3}|x|^3 & |x|\leq \Delta\\
\Delta x^2-\Delta^2|x|+\frac{1}{3}\Delta^3&|x|>\Delta ,
\end{cases}.
\]
where $\Delta \triangleq \frac{3 \mu_1}{2 \mu_2}$.
$g$ can be easily verified to be twice continuously differentiable. Assume that 
$d\geq 2T$, and let
$\bv_1,\bv_2,\ldots,\bv_{T}$ be orthogonal unit vectors in $\reals^d$ which 
will be
specified
later. Given $T$, and letting $\gamma>0$ be a parameter to be specified later, 
define the function $f_T$ as
\[
f_T(\bw) =
\frac{\mu_2}{12}\left(g(\inner{\bv_1,\bw})+g(\inner{\bv_T,\bw})+\sum_{i=1}^{T-1}g(\inner{\bv_i,\bw}-\inner{\bv_{i+1},\bw})-\gamma\inner{\bv_1,\bw}\right).
\]
This function is easily shown to be convex and twice-differentiable, with
$\mu_1$-Lipschitz gradients and $\mu_2$-Lipschitz Hessians (the proof is 
identical to the proof of \lemref{lem:smoothstrong}). Our goal will be to show 
a lower bound on the optimization error using this 
type of function.

\subsection{Minimizer of $f_T$}\label{subsec:min_f_T_convex}

In this subsection, we analyze the properties of a minimizer of $f_T$. To that 
end, we introduce the following function in $\reals^T$:
\[
\hat{f}_T(\bw)=g(w_1)+g(w_T) + 
\sum_{i=1}^{T-1}g(w_i-w_{i+1})-\gamma w_1.
\]
It is easily verified that the minimal values of $\frac{\mu_2}{12}\hat{f}_T$ and $f_T$ are the 
same, and moreover, if $\hat{\bw}\in\reals^T$ is a minimizer of $\hat{f}_T$, 
then $\bw^*=\sum_{j=1}^{T}\hat{w}^*_j\cdot \bv_j\in \reals^d$ is a minimizer of 
$f_T$, and with the same Euclidean norm as $\hat{\bw}^*$.

We begin with the following technical lemma:
\begin{lemma}\label{lem:gen_sol}
	$\hat{f}_T$ has a unique minimizer $\hat{\bw}^* \in \reals^T$, which  
	satisfies
	\[
	\hat{w}^*_t = \delta\cdot(T+1)\cdot\left(1-\frac{t}{T+1}\right)~,
	\]
	for all $t=1,2,\ldots,T$, where $\delta$ is non-negative and independent of 
	t. Moreover,
	\[
	g'(\hat{w}^*_1) + g'(\hat{w}^*_T) = \gamma ~.
	\]	
\end{lemma}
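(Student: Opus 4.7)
The plan is to reduce the characterization of the minimizer to a single scalar equation in $\delta$, via first-order optimality, exploiting the fact that $g'$ is strictly monotone even though $g''(0)=0$.

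My first step would be to establish existence of a minimizer. Since $\hat{f}_T$ is a sum of convex functions plus a linear term, it is convex and continuous. For coercivity, I would observe that $g\geq 0$ everywhere and grows at least quadratically for $|x|>\Delta$, so if $\norm{\bw}\to\infty$ then at least one of $|w_1|$, $|w_T|$, or some $|w_i-w_{i+1}|$ must go to infinity (these $T+1$ quantities determine $\bw$ via a bounded linear map from $\reals^T$), forcing $\hat{f}_T(\bw)\to\infty$. Hence a minimizer exists.

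Next, I would compute the stationarity conditions $\nabla\hat{f}_T(\bw)=0$ coordinatewise, which read
\begin{align*}
g'(w_1)+g'(w_1-w_2) &= \gamma,\\
g'(w_i-w_{i+1}) &= g'(w_{i-1}-w_i),\quad 2\leq i\leq T-1,\\
g'(w_T) &= g'(w_{T-1}-w_T).
\end{align*}
From the definition of $g$ one computes $g'(x)=x|x|$ for $|x|\leq\Delta$ and $g'(x)=2\Delta x-\Delta^2\sign(x)$ otherwise, so $g'$ is continuous and strictly increasing on all of $\reals$, hence injective. Applying injectivity to the middle equations forces all consecutive differences $w_i-w_{i+1}$ to equal a common constant $\delta$, and the last equation then forces $w_T=\delta$. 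Unwinding gives $\hat{w}^*_t=(T+1-t)\delta$ for every $t$, which is exactly the claimed form. Substituting back into the first equation reduces the problem to solving $g'(T\delta)+g'(\delta)=\gamma$ for $\delta$. Since $\delta\mapsto g'(T\delta)+g'(\delta)$ is continuous, strictly increasing, surjective onto $\reals$, and vanishes at $\delta=0$, positivity of $\gamma$ yields a unique $\delta>0$; thus the minimizer is unique, $\delta\geq 0$, and $g'(\hat{w}^*_1)+g'(\hat{w}^*_T)=g'(T\delta)+g'(\delta)=\gamma$ follows immediately.

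The main technical point to watch will be verifying strict monotonicity of $g'$ across the junction $|x|=\Delta$ (so that the stationarity equations are genuinely equivalent to the claimed recursion), together with the mild coercivity argument needed for existence. Both are routine given the explicit piecewise form of $g$, and the proof requires nothing beyond convexity and pointwise analysis of $g'$.
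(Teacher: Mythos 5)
Your proposal is correct and follows essentially the same route as the paper: write the coordinatewise stationarity conditions, use strict monotonicity (injectivity) of $g'$ to force all consecutive differences and $\hat{w}^*_T$ to equal a common $\delta$, and reduce uniqueness and the identity $g'(\hat{w}^*_1)+g'(\hat{w}^*_T)=\gamma$ to the scalar equation $g'(T\delta)+g'(\delta)=\gamma$. The only cosmetic differences are that you make existence explicit via coercivity (the paper leaves it implicit) and deduce $\delta\geq 0$ from monotonicity and $\gamma>0$ rather than from the paper's comparison $\hat{f}_T(\hat{\bw}^*)\leq\hat{f}_T(\mathbf{0})=0$; both are valid.
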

\begin{proof}
Taking the derivative and setting to zero, we get that the 
\[
g'(\hat{w}^*_1)+g'(\hat{w}^*_1-\hat{w}^*_2)=\gamma~,~~g'(\hat{w}^*_{T-1}-\hat{w}^*_T)=g'(\hat{w}^*_T)
\]
as well as
\[
g'(\hat{w}^*_{j-1}-\hat{w}^*_j)=g'(\hat{w}^*_{j}-\hat{w}^*_{j+1})
\]
for all $j\in \{2,3,\ldots,T-1\}$. By definition of $g$, it is easily verified 
that $g'$ is 
a strictly monotonic (hence invertible) function, so the above implies 
$\hat{w}^*_{j-1}-\hat{w}^*_j=\hat{w}^*_{j}-\hat{w}^*_{j+1}$ for all $j\in 
\{2,3,\ldots,T-1\}$, as well as $\hat{w}^*_{T-1}-\hat{w}^*_T=\hat{w}^*_T$. From 
this, it follows by straightforward induction that $\hat{w}^*_{T+1-t}=t\cdot 
\hat{w}^*_T$, from which the first displayed equation in the lemma follows. 
This also implies $g'(T\hat{w}^*_T)+g'(\hat{w}^*_T)=\gamma$, and 
since $g'$ is strictly monotonic, we have that $\hat{w}^*_T$ is uniquely 
defined, and since the other coordinates of $\hat{\bw}^*$ are also uniquely 
defined given $\hat{w}^*_T$, we get that $\hat{\bw}^*$ is unique. Finally, 
$\delta$ (and hence $\hat{w}_t^*$ for all $t$) is necessarily non-negative, 
since otherwise $\hat{w}^*_1$ is negative, which would imply 
$\hat{f}_T(\hat{\bw}^*)>0$, even though $\hat{f}_T(\mathbf{0})=0$, violating 
the fact that $\hat{\bw}^*$ minimizes $\hat{f}_T$. 
\end{proof}

The main technical result in this subsection is the following proposition, 
which characterizes $\norm{\hat{\bw}^*}$ and $\hat{f}_T(\hat{\bw}^*)$ under 
various parameter regimes. By the discussion above and definition of $f_T$, we 
have 
\begin{equation}\label{eq:fhatf}
\norm{\bw^*}=\norm{\hat{\bw}^*}~~~\text{and}~~~f_T(\bw^*)=\frac{\mu_2}{12}\cdot\hat{f}_T(\hat{\bw}^*)~,
\end{equation}
which will be used in the remainder of the proof of our theorem.
\begin{proposition}\label{prop:opt_convex}
The function $\hat{f}_T$ and its minimizer $\hat{\bw}^*$ has the following 
properties, depending on the values of $\gamma,\Delta,T$:
\begin{center}
	\begin{tabular}{|c|c|c|c|}
		\cline{2-4}
		\multicolumn{1}{c|}{} & $\gamma \leq \frac{\Delta^2 \left( 
		1+T^2\right)}{T^2}$ & $\frac{\Delta^2 \left( 1+T^2\right)}{T^2} <\gamma 
		\leq 2 \Delta^2T$ & $\gamma > 2\Delta^2T$ \\
		\hline
		$\hat{f}_T(\hat{\bw}^*)$  & 
		$-\frac{2\mu_2\gamma^{3/2}T}{3\sqrt{\left(1+T^2\right)}}$ &
		\shortstack{$\frac{1}{3} T\delta^{3}+ \Delta T^2 \delta^{2} -T\left( 
		\Delta^2+\gamma\right)\delta + \frac{\Delta^3}{3}$ \\  $\delta = 
		-\Delta T + \Delta T\sqrt{1+\frac{\gamma + \Delta^2 }{\Delta^2 T^2}}$}  
		& 
		$-\frac{T\left(\gamma + 2\Delta^2\right)^2}{4\Delta(T+1)} 
		+\frac{(T+1)\Delta^3}{3}$ \\
		\hline
		$\norm{\hat{\bw}^*}^2$ & $\leq 
		\frac{\gamma(1+T)^3}{3\left(1+T^2\right)}$ & $\leq \frac{\left(\gamma + 
		\Delta^2\right)^2(T+1)^3}{12\Delta^2 T^2}$ & $\leq 
		\frac{(T+1)\left(\gamma + 2\Delta^2\right) ^2}{12\Delta^2}$ \\
		\hline
	\end{tabular}
\end{center}
	
\end{proposition}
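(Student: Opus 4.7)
The plan is to use Lemma~\ref{lem:gen_sol} to reduce the problem to determining a single scalar $\delta \ge 0$, then to split into three cases according to which branch of the piecewise-defined $g'$ governs $\hat{w}_1^*$ and $\hat{w}_T^*$. Concretely, Lemma~\ref{lem:gen_sol} gives $\hat{w}_t^* = \delta(T+1-t)$, so in particular $\hat{w}_1^* = \delta T$ and $\hat{w}_T^* = \delta$, and each successive difference $\hat{w}_i^* - \hat{w}_{i+1}^*$ equals $\delta$. Recalling the formula
\[
g'(x) = \begin{cases} x^2\,\mathrm{sign}(x) & |x| \le \Delta \\ 2\Delta x - \Delta^2\,\mathrm{sign}(x) & |x| > \Delta \end{cases},
\]
the optimality condition $g'(\hat{w}_1^*) + g'(\hat{w}_T^*) = \gamma$ from Lemma~\ref{lem:gen_sol} reduces to a scalar equation in $\delta$ whose form depends only on whether $\delta T \lessgtr \Delta$ and $\delta \lessgtr \Delta$.

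I would handle the three cases in turn. In case~1 ($\delta T \le \Delta$, so also $\delta \le \Delta$), the cubic branch gives $\delta^2 T^2 + \delta^2 = \gamma$, hence $\delta^2 = \gamma/(T^2+1)$; the threshold $\delta T \le \Delta$ then translates precisely to $\gamma \le \Delta^2(T^2+1)/T^2$. In case~2 ($\delta T > \Delta$ but $\delta \le \Delta$) the equation becomes the quadratic $\delta^2 + 2\Delta T\delta - (\gamma+\Delta^2) = 0$, whose positive root is the displayed $\delta = -\Delta T + \Delta T\sqrt{1 + (\gamma+\Delta^2)/(\Delta^2 T^2)}$; the endpoints $\delta = \Delta/T$ and $\delta = \Delta$ yield exactly the boundary values $\gamma = \Delta^2(T^2+1)/T^2$ and $\gamma = 2\Delta^2 T$. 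In case~3 ($\delta > \Delta$) the linear relation $2\Delta(T+1)\delta - 2\Delta^2 = \gamma$ gives $\delta = (\gamma + 2\Delta^2)/(2\Delta(T+1))$.

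To compute $\hat{f}_T(\hat{\bw}^*)$, I would use the telescoped form
\[
\hat{f}_T(\hat{\bw}^*) = g(\delta T) + T g(\delta) - \gamma\delta T,
\]
which follows from $\hat{w}_i^* - \hat{w}_{i+1}^* = \delta$ and $\hat{w}_T^* = \delta$. Substituting the appropriate piece of $g$ in each regime and collecting terms reproduces the three entries of the table: in case~1 a clean cancellation gives $-\tfrac{2}{3}\gamma^{3/2} T / \sqrt{1+T^2}$; in case~2 the mixed cubic/quadratic substitution yields the displayed polynomial in $\delta$; and in case~3 plugging the explicit $\delta$ into the quadratic $\Delta T(T+1)\delta^2 - T(2\Delta^2+\gamma)\delta + (T+1)\Delta^3/3$ yields the closed form. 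For the norm bounds, the uniform estimate
\[
\norm{\hat{\bw}^*}^2 = \delta^2 \sum_{j=1}^T j^2 = \delta^2\,\tfrac{T(T+1)(2T+1)}{6} \le \tfrac{\delta^2(T+1)^3}{3}
\]
reduces everything to bounding $\delta^2$; cases~1 and~3 follow immediately from the closed form of $\delta$, and in case~2 I would use the elementary inequality $\sqrt{1+x} - 1 \le x/2$ applied to $x = (\gamma+\Delta^2)/(\Delta^2 T^2)$ to get $\delta \le (\gamma+\Delta^2)/(2\Delta T)$.

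The main obstacle I expect is case~2: the lack of a clean closed form for $\delta$ means the expression for $\hat{f}_T(\hat{\bw}^*)$ stays in terms of $\delta$ rather than $\gamma$, and the norm bound requires a non-sharp but correctly-shaped estimate via $\sqrt{1+x} \le 1 + x/2$. Everything else reduces to careful but routine algebra, so the real work is organising the three regimes cleanly and verifying that their boundary values match at $\gamma = \Delta^2(T^2+1)/T^2$ and $\gamma = 2\Delta^2 T$, which is automatic from the continuity of $g'$ at $\pm\Delta$.
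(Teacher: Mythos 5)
Your proposal is correct and follows essentially the same route as the paper: invoke Lemma~\ref{lem:gen_sol}, split into the three regimes $\delta T\le\Delta$, $\Delta/T<\delta\le\Delta$, $\delta>\Delta$ (shown to match the stated ranges of $\gamma$), solve the scalar equation $g'(\delta T)+g'(\delta)=\gamma$ on the appropriate branch, and substitute into the value and the norm (your telescoped form $g(\delta T)+Tg(\delta)-\gamma\delta T$ and the bound $\delta^2\sum_{j\le T}j^2\le\delta^2(T+1)^3/3$ are just compact rewritings of the paper's term-by-term computations). Incidentally, your case-1 value $-\tfrac{2}{3}\gamma^{3/2}T/\sqrt{1+T^2}$ and case-2 polynomial $\tfrac{T}{3}\delta^3+\Delta T^2\delta^2-T(\Delta^2+\gamma)\delta+\tfrac{\Delta^3}{3}$ are the correct expressions (the stray $\mu_2$ in the table and the coefficients in the paper's in-proof display are typos).
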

\begin{proof}
	To prove the proposition, we will consider three regimes, depending on 
	$T,\delta,\Delta$: Namely, $T\delta\leq \Delta$, 
	$\frac{\Delta}{T}<\delta\leq\Delta$ and $\delta>\Delta$. We will show that 
	each regime corresponds to one of the three regimes specified in the 
	proposition, and prove the relevant bounds.
	
	\underline{\textbf{Case 1: $T\delta \leq \Delta$}}. In that case, 
	$\hat{w}^*_1,\hat{w}^*_T$ as well as $\hat{w}^*_i-\hat{w}^*_{i+1}$ for all 
	$i=2,\ldots,T-1$ in the definition of $\hat{f}_T$ all lie in the interval 
	where $g$ is a cubic function. Using \lemref{lem:gen_sol},
	\[
	g'(w^*_1) + g'(w^*_T) = w^{*2}_1 + w^{*2}_T = \gamma 
	\]
	hence
	\[
	\delta^2T^2+\delta^2 = \gamma
	\]
	and 
	\[
	\delta = \sqrt{\frac{\gamma}{1+T^2}}.
	\]
	Therefore, our condition $T\delta\leq\Delta$ is exactly equivalent to 
	$\gamma \leq \frac{\Delta^2 \left( 1+T^2\right)}{T^2}$, namely the first 
	regime discussed in the proposition. We now establish the relevant bounds:
	\begin{align*}
	\hat{f}_T(\hat{\bw}^*) = & 
	\frac{1}{3}\left(\frac{\sqrt{\gamma}(1+T)}{\sqrt{\left(1+T^2\right)}}\right)^3
	 \left[\left(1-\frac{1}{1+T} \right)^3 + \left(1-\frac{T}{1+T} \right)^3 
	\right]\\
	& ~~~~~~~~~+ \frac{1}{3}(T-1) 
	\left(\sqrt{\frac{\gamma}{\left(1+T^2\right)}}\right)^3 - 
	\frac{\gamma^{3/2}T}{\sqrt{\left(1+T^2\right)}} \\
	& =  \frac{1}{3} \left(\sqrt{\frac{\gamma}{\left(1+T^2\right)}}\right)^3 
	\left(T^3+1+T-1 \right) - \frac{\gamma^{3/2}T}{\sqrt{\left(1+T^2\right)}} \\
	& = \frac{\gamma^{3/2}T}{\sqrt{\left(1+T^2\right)}} \left(\frac{1}{3} - 1 
	\right)
	\end{align*}
	and	
	\begin{align*}
	\norm{\hat{\bw}^*}^2_2 = & \sum_{t=1}^T\hat{w}^{*2}_t 
	=\frac{\gamma(1+T)^2}{\left(1+T^2\right)}\sum_{t=1}^T \left(1-\frac{t}{1+T} 
	\right)^2 \\
	& = \frac{\gamma(1+T)^2}{\left(1+T^2\right)}\left( T - 
	\frac{2}{T+1}\sum_{t=1}^T t + \frac{1}{(T+1)^2}\sum_{t=1}^T t^2 \right) \\
	& \leq  \frac{\gamma(1+T)^2}{\left(1+T^2\right)}\left( T - \frac{2}{T+1} 
	\cdot \frac{T(T+1)}{2} + \frac{1}{(T+1)^2} \cdot \frac{(T+1)^3}{3} \right)\\
	& \leq \frac{\gamma(1+T)^3}{3\left(1+T^2\right)}~,
	\end{align*}
	where in the calculation above we used fact 
	$\sum_{t=1}^T t^2 \leq \int_1^{T+1}t^2dt < \frac{(T+1)^3}{3}$.
	
	\underline{\textbf{Case 2: $\frac{\Delta}{T}<\delta \leq \Delta$}}. In this 
	case, by \lemref{lem:gen_sol}, $\hat{w}^*_T \leq \Delta$ but $ \hat{w}^*_1 
	> \Delta$. Therefore, in the definition $\hat{f}_T(\hat{\bw}^*)$, 
	$g(\hat{w}^*_1)$ lies in the quadratic region of $g$, whereas 
	$g(\hat{w}^*_T)$ and $g'(\hat{w}^*_{i}-\hat{w}^*_{i+1})$ for all $i$ lies 
	in the cubic region of $g$. As a result,
	\[
	g'(w^*_1) + g'(w^*_T) = 2 \Delta w^*_1 -\Delta^2 +  w^{*2}_T  = \gamma.
	\]	
	Plugging in $w^*_T=\delta$ and $w^*_1=T\cdot\delta$, we get
	\[
	\delta^2 + 2\Delta \delta\cdot T -\left(\gamma + \Delta^2 \right)= 0~,
	\]
	and therefore (using the fact $\delta\geq 0$, see \lemref{lem:gen_sol}),
	\[
	\delta = -\Delta T + \Delta T\sqrt{1+\frac{\gamma + \Delta^2 }{\Delta^2 
	T^2}} ~.
	\]
	This, plus the assumption $\frac{\Delta}{T}<\delta \leq \Delta$, is 
	equivalent to $\frac{\Delta^2 \left( 1+T^2\right)}{T^2} <\gamma \leq 2 
	\Delta^2T$, hence showing that we are indeed in the second regime as 
	specified in our proposition. Turning to calculate the relevant bounds, we 
	have
	\[
	\hat{f}_T(\hat{\bw}^*) = T\delta^3+ \Delta^2T^2 \delta^2 -T\left( 
	\Delta^2+\gamma\right)\delta + \frac{\Delta^3}{3}~.
	\]
	Moreover,
	\[
	\norm{\hat{\bw}^*}^2 ~=~ 
	\delta^2(T+1)^2\sum_{t=1}^T \left(1-\frac{t}{1+T} \right)^2,
	\]
	which by definition of $\delta$ above and the inequality $\sqrt{1+x}\leq 
	1+\frac{1}{2}x$ for all $x\geq 0$, is at most $\frac{\left(\gamma + 
	\Delta^2\right)^2(T+1)^3}{12\Delta^2 T^2}$.
	
	\underline{\textbf{Case 3: $\delta > \Delta$}}. In this 
	case, by \lemref{lem:gen_sol}, we have $ \hat{w}^*_1>\hat{w}^*_T = 
	\hat{w}^*_i-\hat{w}^*_{i+1} > \Delta$, which implies that in the definition 
	of $\hat{f}_t(\hat{\bw}^*)$, these terms all lie in the quadratic region of 
	$g$. Therefore, 
	\[
	g'(w^*_1) + g'(w^*_T) = 2 \Delta w^*_1 -\Delta^2 + 2 \Delta w^*_T -\Delta^2 
	= \gamma ~,
	\]
	and thus
	\[
	2 \Delta (T+1)\delta = \gamma + 2 \Delta^2~, 
	\]
	or equivalently
	\[
	\delta = \frac{\gamma + 2\Delta^2}{2\Delta(T+1)}~.
	\]
	Note that this, plus our assumption $\delta>\Delta$, is equivalent to  
	$\gamma  > 2\Delta^2T $, which shows that we are indeed in the third regime 
	as specified in our proposition. Turning to calculate $\norm{\hat{\bw}^*}$ 
	and $\hat{f}_T(\hat{\bw}^*)$, we have
	\begin{align*}
	\hat{f}_T(\hat{\bw}^*) &= \Delta \hat{w}^{*2}_1 - \Delta^2 \hat{w}^{*}_1 + 
	\frac{1}{3}\Delta^3 + \Delta \hat{w}^{*2}_T - \Delta^2 \hat{w}^{*}_T + 
	\frac{1}{3}\Delta^3 + \\ 
	& ~~~~~~~~~~~~~~~ \sum_{i=1}^{T-1} \left( \Delta 
	(\hat{w}^{*}_i-\hat{w}^{*}_{i+1})^2 - \Delta^2 
	(\hat{w}^{*}_i-\hat{w}^{*}_{i+1}) + \frac{1}{3}\Delta^3 \right) -\gamma 
	\hat{w}_1 \\
	& = T(T+1)\Delta \delta^2-T(\gamma + 2\Delta^2)\delta 
	+\frac{(T+1)\Delta^3}{3} \\
	& = \frac{T(\gamma + 2\Delta^2)^2}{4\Delta(T+1)} - \frac{T\left(\gamma + 
	2\Delta^2\right)^2}{2\Delta(T+1)} +\frac{(T+1)\Delta^3}{3} \\
	& = - \frac{T\left(\gamma + 2\Delta^2\right)^2}{4\Delta(T+1)} 
	+\frac{(T+1)\Delta^3}{3}~,
	\end{align*}
	and	
	\[
	\norm{\hat{\bw}^*}^2 ~=~  \frac{\left(\gamma + 2\Delta^2\right) 
	^2}{4\Delta^2} \sum_{t=1}^T \left(1-\frac{t}{1+T} \right)^2~\leq~
	\frac{(T+1)\left(\gamma + 2\Delta^2\right) ^2}{12\Delta^2}
	\]
\end{proof}

\subsection{Oracle Complexity Lower Bound}\label{subsec:mainproof_convex}

Given the expressions on the optimal value of $\hat{f}_T$, derived in the 
previous subsection, we turn to explain how the oracle complexity lower bound 
is derived. The argument is very similar to the strongly convex case (proof of 
\thmref{thm:main_strong}, subsection \ref{subsec:mainproof}): Specifically, 
consider the function $f_{2T}$, given 
by
\[
f_{2T}(\bw) =
\frac{\mu_2}{12}\left(g(\inner{\bv_1,\bw})+g(\inner{\bv_{2T},\bw})+\sum_{i=1}^{2T-1}
g(\inner{\bv_i,\bw}-\inner{\bv_{i+1},\bw})-\gamma\inner{\bv_1,\bw}\right).
\]
Given 
an algorithm, we choose 
$\bv_1,\bv_2,\ldots,\bv_{T}$ to be orthogonal unit vectors, so that each 
$\bv_t$ is orthogonal 
to the first $t$ points
$\bw_{1},\bw_{2},\ldots,\bw_{t}$ computed by the algorithm (this is possible, 
since the gradients and Hessians of $f_{2T}$ at each $\bw_t$ reveals no 
information on future $\bv_t$'s -- see \lemref{lem:info}). Also, we let 
$\bv_{T+1},\ldots,\bv_{2T}$ equal $\bv_T$. 

With this choice, it is easily verified that 
\[
f_{2T}(\bw_T) = 
\frac{\mu_2}{12}\left(g(\inner{\bv_1,\bw})+g(\inner{\bv_{T},\bw})+\sum_{i=1}^{T-1}
g(\inner{\bv_i,\bw_T}-\inner{\bv_{i+1},\bw_T})-\gamma\inner{\bv_1,\bw_T}\right),
\]
which is clearly no better than $\min_{\bw} f_T(\bw)$, where $f_T$ is defined 
with the same $\bv_1,\ldots,\bv_T$. Therefore, we can lower bound the 
optimization error $f_{2T}(\bw_T)-\min_{\bw}f_{2T}(\bw)$ by 
$\min_{\bw}f_{T}(\bw)-\min_{\bw}f_{2T}(\bw)$. Moreover, by \eqref{eq:fhatf}, 
this equals
\[
\frac{\mu_2}{12}\left(\min_{\bw}\hat{f}_T(\bw)-\min_{\bw}\hat{f}_{2T}(\bw)\right).
\]
Using proposition \ref{prop:opt_convex}, we can now plug in these minimal 
values, depending on the various parameter regimes, and get an oracle 
complexity lower bound. Computing these bounds and parameter regimes (while 
picking the free parameter $\gamma$ appropriately) is performed in the next 
subsection. 

\subsection{Setting the $\gamma$ Parameter}\label{subsec:smoothconvex}

To simplify notation, we let $\hat{f}^*_{T}$ and $\hat{f}^*_{2T}$ be shorthand 
for $\min_{\bw}\hat{f}_T(\bw)$ and $\min_{\bw}\hat{f}_{2T}(\bw)$ respectively, 
with minimizers $\hat{\bw}^*_{T}$ and $\hat{\bw}^*_{2T}$. We will consider 
three regimes, depending on the relationships between $D,\Delta,T$.

%In the following subsections we will use the results of proposition 
%\ref{prop:opt_convex} for both $\hat{f}_T(\hat{\bw}_T^*)$ and 
%$\hat{f}_{2T}(\hat{\bw}_{2T}^*)$, thus their regions, function values and the 
%norm of minimizers should be taken accordingly.

\subsubsection{Case 1: $\frac{D^2}{48\Delta^2 T^3}\leq 
	\frac{1}{T^2}$}

In this setting, we choose
\[
\gamma = \frac{D^2}{48T}
\]
Using this and the assumption on the parameters, we get that $\gamma \leq \Delta^2 <
\frac{\Delta^2 \left( 1+4T^2\right)}{4T^2} < \frac{\Delta^2 \left( 1+T^2\right)}{T^2}$ , and therefore, we are in the 
first regime  for both $f_T$ and $f_{2T}$ as specified in proposition 
\ref{prop:opt_convex}. Plugging in 
the bound on $\norm{\hat{\bw^*}}^2$ in that regime, and using the fact that 
$\Delta^2\leq\gamma$ by the assumption above, we have
\[
\norm{\hat{\bw}_{2T}^*}^2_2 \leq \frac{\gamma(1+2T)^3}{3\left(1+4T^2\right)} 
\leq \frac{27D^2T^3}{576T^2} \leq D^2
\] as required.

Using the results from proposition \ref{prop:opt_convex} for the first regime 
we can compute the optimization error bound
\begin{align*}
\hat{f}_{T}^* - \hat{f}_{2T}^* &= \frac{4
	\gamma^{3/2}T}{3\sqrt{(1+4T^2)}}-\frac{2 \gamma^{3/2}T}{3\sqrt{(1+T^2)}} \\
& = \frac{ 2\gamma^{3/2}}{3} \left( \frac{1}{\sqrt{\left(1 + 
\frac{1}{4T^2}\right)}}- \frac{1}{\sqrt{\left(1+\frac{1}{T^2}\right)}} \right) 
\\
& \geq \frac{2\gamma^{3/2}}{3} \left( 1 - \frac{1}{8T^2} - \left( 1 - 
\frac{1}{2T^2} + \frac{3}{8T^4} \right) \right) \\
& = \frac{ 2\gamma^{3/2}}{3}  \left(  \frac{3}{8T^2} - \frac{3}{8T^4} \right) = 
\frac{ \gamma^{3/2}\left( T^2-1 \right)}{4T^4} \\
& \geq \frac{ D^3}{1331T^{7/2}} \\
\end{align*}

Where in the first inequality we used the fact that
$
1 - \frac{1}{2}x \leq \frac{1}{\sqrt{1+x}} \leq 1 - \frac{1}{2}x + 
\frac{3}{8}x^2
$ for all $x\geq 0$ and for the last inequality we assumed that $T \geq 2$. In the case that $T=1$, the final result still holds.  
Hence, the suboptimality is at least $\frac{\mu_2D^3}{16000 T^{7/2}}$.

\subsubsection{Case 2: $\frac{1}{T^2} < 
	\frac{D^2}{48\Delta^2 T^3} \leq 1$}

In this setting, we choose 
\begin{equation}\label{eq:gammaval}
\gamma = {\frac{D\Delta}{ \sqrt{12T}}}.
\end{equation}
Using this and the assumption on the parameters, we get that $\frac{\Delta^2 
	\left( 1+T^2\right)}{T^2} <\gamma < 2 \Delta^2T$, and therefore, we are in 
	the 
second regime for both $f_T$ and $f_{2T}$ as specified in proposition 
\ref{prop:opt_convex}. Plugging in 
the bound on $\norm{\hat{\bw^*}}^2$ in that regime, and using the fact that 
$\Delta^2<\gamma$ by the assumption above, we have
\[
\norm{\hat{\bw}^*_{2T}}^2 ~\leq~ \frac{\left(\gamma + \Delta^2\right)^2(2T+1)^3 
}{48\Delta^2T^2} ~\leq~ \frac{\gamma^2(2T+1)^3
}{12\Delta^2T^2} ~=~ \frac{D^2(2T+1)^3
}{144T^3} ~\leq~D^2
\] as required.

Turning to compute the optimization error bound, and letting 
$\delta_T,\delta_{2T}$ denote the quantity $\delta$ in proposition 
\ref{prop:opt_convex} for $\hat{f}_T$ and $\hat{f}_{2T}$ respectively, we have
\begin{equation}\label{eq:subopt1}
f^*_T-f^*_{2T} ~=~ \left(2\delta_{2T}-\delta_T \right)\left(T\left( 
\Delta^2+\gamma \right)-\Delta T^2 \left(2\delta_{2T}+\delta_T \right) 
\right)+\frac{1}{3}T \left(\delta_T^3-2\delta_{2T}^3 \right) .
\end{equation}
To continue, we use the following auxiliary lemma:
\begin{lemma}
	$\left(2\delta_{2T}-\delta_T \right)\left(T\left( \Delta^2+\gamma 
	\right)-\Delta T^2 \left(2\delta_{2T}+\delta_T \right) \right)\geq 0$
\end{lemma}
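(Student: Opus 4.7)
The plan is to exploit the two quadratic identities satisfied by $\delta_T$ and $\delta_{2T}$ in the second regime of Proposition~\ref{prop:opt_convex}, namely
\[
\delta_T^2+2\Delta T\delta_T = \gamma+\Delta^2 \quad\text{and}\quad \delta_{2T}^2+4\Delta T\delta_{2T} = \gamma+\Delta^2,
\]
obtained from setting the derivative of $\hat f_T$ (resp.\ $\hat f_{2T}$) to zero. I will establish the non-negativity of the two factors separately.

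For the first factor $2\delta_{2T}-\delta_T$, I would solve the quadratics to write $\delta_T=\Delta T(\sqrt{1+\alpha}-1)$ and $\delta_{2T}=2\Delta T(\sqrt{1+\alpha/4}-1)$, where $\alpha=(\gamma+\Delta^2)/(\Delta^2T^2)$. Since $x\mapsto(\sqrt{1+x}-1)/x=1/(\sqrt{1+x}+1)$ is decreasing in $x$, one gets $4(\sqrt{1+\alpha/4}-1)\ge\sqrt{1+\alpha}-1$, which is exactly $2\delta_{2T}\ge\delta_T\ge 0$ (recall Lemma~\ref{lem:gen_sol} gives $\delta\ge 0$).

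For the second factor $T(\Delta^2+\gamma)-\Delta T^2(2\delta_{2T}+\delta_T)$, the key trick is to substitute $\gamma+\Delta^2$ using each of the two quadratic identities above and then average. Substituting the $\delta_T$-identity gives
\[
T\delta_T^2+\Delta T^2\delta_T-2\Delta T^2\delta_{2T},
\]
while substituting the $\delta_{2T}$-identity gives
\[
T\delta_{2T}^2+2\Delta T^2\delta_{2T}-\Delta T^2\delta_T.
\]
Adding these two equal expressions, the cross terms $\pm\Delta T^2\delta_T$ and $\pm 2\Delta T^2\delta_{2T}$ cancel, yielding $T(\delta_T^2+\delta_{2T}^2)$. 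Therefore the second factor equals $\tfrac{T}{2}(\delta_T^2+\delta_{2T}^2)\ge 0$.

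Since both factors are non-negative, their product is non-negative, which is the claim. The only non-routine step is spotting the averaging trick in the second bullet; everything else is just algebra and an elementary monotonicity of $(\sqrt{1+x}-1)/x$. I do not anticipate any real obstacle here — the main subtlety is to remember that we are working entirely in the second parameter regime of Proposition~\ref{prop:opt_convex}, so both quadratic identities are simultaneously available.
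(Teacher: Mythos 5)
Your proof is correct, and it differs from the paper's argument in both halves, in ways worth noting. For the factor $2\delta_{2T}-\delta_T$, the paper substitutes $\alpha=\frac{\gamma+\Delta^2}{\Delta^2T^2}$ and verifies $4\sqrt{1+\alpha/4}\geq 3+\sqrt{1+\alpha}$ by squaring and a chain of equivalences ending in $\sqrt{1+\alpha}\leq 1+\alpha/2$; your observation that $(\sqrt{1+x}-1)/x = 1/(\sqrt{1+x}+1)$ is decreasing gives the same conclusion in one line and is arguably cleaner. For the factor $T(\Delta^2+\gamma)-\Delta T^2(2\delta_{2T}+\delta_T)$, the paper proceeds by inequality: it bounds $\delta_T\leq\frac{\gamma+\Delta^2}{2\Delta T}$ (and, implicitly, $\delta_{2T}\leq\frac{\gamma+\Delta^2}{4\Delta T}$) via $\sqrt{1+x}\leq 1+\tfrac12 x$ and concludes the factor is at least $0$; your averaging of the two defining quadratics $\delta_T^2+2\Delta T\delta_T=\gamma+\Delta^2$ and $\delta_{2T}^2+4\Delta T\delta_{2T}=\gamma+\Delta^2$ instead yields the exact identity
\[
T(\Delta^2+\gamma)-\Delta T^2\left(2\delta_{2T}+\delta_T\right)=\frac{T}{2}\left(\delta_T^2+\delta_{2T}^2\right),
\]
which I verified and which is strictly more informative than the paper's bound (it even shows the factor is strictly positive). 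Your caveat that both quadratic identities require $\hat f_T$ and $\hat f_{2T}$ to lie in the second regime is exactly the condition under which the lemma is invoked in the paper (the choice $\gamma=\frac{D\Delta}{\sqrt{12T}}$ in Case 2 places both in that regime), so the argument is complete as stated.
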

\begin{proof}

First we will prove that $T\left( \Delta^2+\gamma 
\right)-\Delta T^2 \left(2\delta_{2T}+\delta_T \right) \geq 0$.

Since 	$\delta_T = -\Delta T + \Delta T\sqrt{1+\frac{\gamma + \Delta^2 
}{\Delta^2 T^2}}$
and using $\sqrt{1+x} \leq 1+ \frac{1}{2}x$ for $x\geq 0$
we have that:
	\[\delta_T \leq \frac{\left(\gamma + 
	\Delta^2\right)}{2\Delta T} \]
So
\[
T\left( \Delta^2+\gamma \right)-\Delta T^2 \left(2\delta_{2T}+\delta_T \right) 
\geq T\left( \Delta^2+\gamma \right) -\Delta T^2 \frac{\left(\gamma + 
\Delta^2\right)}{\Delta T} = 0
\]
To complete the proof, it remains to show that $2\delta_{2T} -\delta_T \geq 0$. 
We have
\[
2\delta_{2T} -\delta_T ~=~ -4\Delta T + 4\Delta T\sqrt{1+\frac{\gamma + 
\Delta^2 }{4\Delta^2 T^2}}  +\Delta T - \Delta T\sqrt{1+\frac{\gamma + \Delta^2 
}{\Delta^2 T^2}}
\]
Define $\alpha := \frac{\gamma + \Delta^2 }{\Delta^2 T^2} \geq 0$.
Hence, we need to prove:
\begin{align*}
&-4+4\sqrt{1+\frac{1}{4}\alpha}+1-\sqrt{1+\alpha} \geq 0\\
&\iff 4\sqrt{1+\frac{1}{4}\alpha} \geq 3+\sqrt{1+\alpha}\\
&\iff 16\left(1+\frac{1}{4}\alpha\right) \geq 9+6\sqrt{1+\alpha}+1+\alpha\\
&\iff 6 + 3\alpha \geq 6\sqrt{1+\alpha}\\
\end{align*}
Which is true since $\sqrt{1+\alpha} \leq 1+\frac{1}{2}\alpha$.
\end{proof}
With this lemma, we can lower bound the optimization error in 
\eqref{eq:subopt1} by 
\begin{equation}\label{eq:subopt2}
\frac{1}{3}T \left(\delta_T^3-2\delta_{2T}^3 \right). 
\end{equation}
To continue, we note that by definition of 
$\delta_T,\delta_{2T}$ and the fact that $1+ \frac{1}{2}x -\frac{1}{8}x^2  \leq 
\sqrt{1+x} \leq 1+ \frac{1}{2}x$, we have
\[
\frac{\left(\gamma + \Delta^2\right)}{2\Delta T} - \frac{\left(\gamma + 
	\Delta^2\right)^2}{8\Delta^3 T^3} \leq \delta_T \leq \frac{\left(\gamma + 
	\Delta^2\right)}{2\Delta T}.
\]
Therefore, 
\begin{align*}
\delta_T-\sqrt[3]{2}\delta_{2T} &~\geq~\frac{\left(\gamma + 
	\Delta^2\right)}{2\Delta T} - \frac{\left(\gamma + 
	\Delta^2\right)^2}{8\Delta^3 
	T^3} - \frac{\sqrt[3]{2}\left(\gamma+\Delta^2\right)}{4\Delta T} \\
& ~\geq~ \frac{\left(\gamma + \Delta^2\right)}{20\Delta T} + \frac{\left(\gamma 
	+ \Delta^2\right)}{8\Delta T} \left(1 - \frac{\gamma + \Delta^2}{\Delta^2 
	T^2} 
\right)\\
& ~\geq~ \frac{\left(\gamma + \Delta^2\right)}{20\Delta T}.
\end{align*}
Using this inequality, and the fact $(a-b)^3 \leq a^3-b^3$ 
for $a\geq b \geq 0$, we can lower bound \eqref{eq:subopt2} by 
\[
\frac{1}{3}T \left(\delta_T-\sqrt[3]{2}\delta_{2T}\right)^3  \geq  
\frac{\left(\gamma+\Delta^2\right)^3}{60\Delta^3 T^2} 
\geq \frac{D^3}{2500 T^{7/2}}.
\]
Hence, the suboptimality is at least $\frac{\mu_2D^3}{30000 T^{7/2}}$.

\subsubsection{Case 3: $\frac{D^2}{48\Delta^2 T^3} > 1$ }
In this setting, we choose
\[
\gamma = {\frac{D\Delta}{ \sqrt{3T}}}.
\]
Using this and the assumption on the parameters, we get that $\gamma > 4 
\Delta^2T$, and therefore, we are in the 
third regime for both $f_T$ and $f_{2T}$ as specified in proposition 
\ref{prop:opt_convex}. Plugging in 
the bound on $\norm{\hat{\bw}^*_{2T}}^2$ in that regime, and using the fact 
that 
$2\Delta^2<\gamma$ by the assumption above, we have
\[
\norm{\hat{\bw}^*_{2T}}^2 ~\leq~ \frac{\left(\gamma + 2\Delta^2\right) 
^2(2T+1)}{12\Delta^2} ~\leq~ \frac{4\gamma^2(2T+1)
}{12\Delta^2} = \frac{D^2(2T+1)}{9T} \leq D^2
\]
Now, by the assumptions that  $T\Delta^3<\frac{\Delta D^2}{48T^2}$ and by using  the fact that
$
1 - x \leq \frac{1}{1+x} \leq 1 - x + x^2$ for all $x\geq 0$, the optimization error bound is  
\begin{align*}
\hat{f}_T^*-\hat{f}_{2T}^* & \geq  \frac{2T\left(\gamma + 2\Delta^2\right)^2}{4\Delta(2T+1)} - \frac{T\left(\gamma + 2\Delta^2\right)^2}{4\Delta(T+1)} 
+\frac{(T+1)\Delta^3}{3}  
-\frac{(2T+1)\Delta^3}{3} \\
& =\frac{\left(\gamma + 2\Delta^2\right)^2}{4\Delta} \left(
\frac{1}{1+\frac{1}{2T}} -\frac{1}{1+\frac{1}{T}}\right) -\frac{T\Delta^3}{3}\\
& \geq \frac{D^2\Delta}{12T} \left(\frac{1}{2T} - \frac{1}{T^2}\right) -\frac{T\Delta^3}{3} \geq \frac{D^2\Delta}{72T^2} - \frac{ D^2\Delta}{144T^2}\\
& =\frac{ D^2\Delta}{144T^2}\\
\end{align*}
In the last inequality we assumed that $T \geq 3$. For $T=1,2$ it can be easily verified that the inequality holds.
Hence, using $\Delta = \frac{3\mu_1}{2\mu_2}$ the suboptimality is at least $\frac{\mu_1D^2}{576T^{2}}$.

\subsection{Wrapping Up}
Combining the three cases from the previous subsection, we see that we get  the 
following lower bound
\[
f_{2T}(\bw_T)-\min_{\bw}f_{2T}(\bw)  \geq \begin{cases} \frac{\mu_2D^3}{30000 
T^{7/2}} & \frac{D^2}{48\Delta^2 T^3} \leq  1\\
\frac{\mu_1D^2}{576T^{2}} &\frac{D^2}{48\Delta^2 T^3} >  1 
\end{cases}~.
\]
Thus, we get that
\[
f_{2T}(\bw_T)-\min_{\bw}f_{2T}(\bw) \geq \min\left\{\frac{\mu_2D^3}{30000 
T^{7/2}}, \frac{\mu_1D^2}{576T^{2}} \right\}~.
\]
Equating these bounds to $\epsilon$, and solving 
for $T$, the theorem follows.

\section{Proof of \thmref{thm:main3}}
The proof of \thmref{thm:main3} will follow the same outline of the proof of \thmref{thm:main2}. 
We are again going to assume without loss of generality that $\bw_1=0$, and we will thus require that $\norm{\bw^*}\leq D$
(see discussion in the proof of \thmref{thm:main2}).
We  define $g:\reals\mapsto\reals$ as
\[
g(x) =  \frac{1}{k+1}|x|^{k+1} 
\]
and
\[
f_T(\bw) =
\frac{\mu_{k}}{k!2^{\frac{k+3}{2}}}\left(g(\inner{\bv_1,\bw})+g(\inner{\bv_T,\bw})+\sum_{i=1}^{T-1}g(\inner{\bv_i,\bw}-\inner{\bv_{i+1},\bw})-\gamma\inner{\bv_1,\bw}\right).
\]
By the following lemma, $f_T(\bw)$ is $k$-times differentiable, with
$\mu_k$-Lipschitz $k-th$  order derivative tensor. 

\begin{lemma}\label{lem:smoothHigherOrder}
	$f_T(\bw)$ is $k$-times differentiable, with
	$\mu_k$-Lipschitz $k-th$  order derivative tensor. 
\end{lemma}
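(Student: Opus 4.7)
The plan is to reduce the lemma to (i) a scalar Lipschitz bound on $g^{(k)}$, and (ii) a uniform spectral bound on the ``direction matrix'' of $f_T$, after which the constants cancel by design.

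First I would directly analyze the scalar function $g(x)=|x|^{k+1}/(k+1)$. Case analysis on the sign of $x$ shows that $g\in C^k(\reals)$, that $g^{(j)}(0)=0$ for every $j<k$, and that $g^{(k)}(x)$ equals $k!\,x$ when $k$ is odd and $k!\,|x|$ when $k$ is even. In either case $g^{(k)}$ is globally $k!$-Lipschitz on $\reals$. The vanishing of all lower-order derivatives at $0$ is also exactly the property needed for the $k$-th-order analog of \lemref{lem:info}.

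Next, since every nonlinear summand of $f_T$ has the form $g(\inner{\bu,\bw})$ for some fixed $\bu$ in the set $S:=\{\bv_1,\bv_T\}\cup\{\bv_i-\bv_{i+1}:1\le i\le T-1\}$, and the linear term $-\gamma\inner{\bv_1,\bw}$ contributes nothing to derivatives of order $\ge 2$ (and only a constant when $k=1$, which is irrelevant for Lipschitz continuity), the chain rule gives $f_T\in C^k(\reals^d)$ with
\[
\nabla^k f_T(\bw)~=~\frac{\mu_k}{k!\,2^{(k+3)/2}}\sum_{\bu\in S}g^{(k)}(\inner{\bu,\bw})\,\bu^{\otimes k},
\]
a sum of symmetric rank-one $k$-tensors. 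Every $\bu\in S$ satisfies $\norm{\bu}\le\sqrt{2}$.

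To bound the injective (operator) tensor norm of $\nabla^k f_T(\bw)-\nabla^k f_T(\bw')$, I would test it against unit vectors $\bx_1,\ldots,\bx_k$, apply the $k!$-Lipschitz bound on $g^{(k)}$, and then Cauchy--Schwarz in the index $\bu$:
\[
\bigl|(\nabla^k f_T(\bw)-\nabla^k f_T(\bw'))[\bx_1,\ldots,\bx_k]\bigr|~\le~\frac{\mu_k}{2^{(k+3)/2}}\sqrt{\sum_{\bu\in S}\inner{\bu,\bw-\bw'}^2}\,\sqrt{\sum_{\bu\in S}\prod_{j=1}^{k}\inner{\bu,\bx_j}^2}.
\]
The matrix $M:=\sum_{\bu\in S}\bu\bu^{\top}$ is tridiagonal in the basis $\bv_1,\ldots,\bv_T$ with diagonal entries $2$ and sub/superdiagonal entries $-1$, so Gershgorin gives $\norm{M}\le 4$; hence $\sum_{\bu\in S}\inner{\bu,\by}^2\le 4\norm{\by}^2$ for every $\by$. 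For the second factor, for each $j$ I would use $\prod_{j'}\inner{\bu,\bx_{j'}}^2\le\norm{\bu}^{2(k-1)}\inner{\bu,\bx_j}^2\le 2^{k-1}\inner{\bu,\bx_j}^2$, average over $j$, and sum over $\bu$ to get $\sum_{\bu\in S}\prod_j\inner{\bu,\bx_j}^2\le 2^{k+1}$. Substituting yields exactly $\mu_k\norm{\bw-\bw'}$.

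The main obstacle is obtaining a Lipschitz constant that is independent of $T$: a naive triangle inequality over the $T+1$ rank-one summands would leave a factor of $T$. The uniform bound $\norm{M}\le 4$, which is essentially the orthogonality of $\bv_1,\ldots,\bv_T$, is what removes this $T$-dependence; without it the construction would only yield a $T$-dependent smoothness parameter and the whole lower bound would degrade. The (otherwise opaque) prefactor $\mu_k/(k!\,2^{(k+3)/2})$ in the definition of $f_T$ is chosen precisely so that the $k!$ coming from the scalar Lipschitz bound and the $2^{(k+3)/2}$ coming from the two tensor-norm estimates cancel to give exactly $\mu_k$.
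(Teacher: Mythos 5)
Your proof is correct and follows essentially the same route as the paper: expand $\nabla^k f_T$ as a sum of rank-one tensors $g^{(k)}(\inner{\bu,\bw})\,\bu^{\otimes k}$ over the directions $\bu\in\{\bv_1,\bv_T\}\cup\{\bv_i-\bv_{i+1}\}$, use the $k!$-Lipschitzness of $g^{(k)}$, and kill the $T$-dependence via the spectral bound $\bigl\|\sum_{\bu}\bu\bu^{\top}\bigr\|\le 4$, with the prefactor $\mu_k/(k!\,2^{(k+3)/2})$ cancelling exactly as in the paper. The only cosmetic differences are that you bound the general injective norm with $k$ distinct test vectors via Cauchy--Schwarz, whereas the paper tests the symmetric tensor against a single unit vector (invoking the symmetric operator-norm characterization), and your explicit treatment of the parity of $k$ in $g^{(k)}$ is if anything slightly more careful than the paper's.
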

\begin{proof}
	Similarly to \lemref{lem:smoothstrong}, we can assume without loss of generality, that the vectors $\bv_1,\bv_2,...,\bv_T$ correspond to the standard basis vectors $\be_1,\be_2,...,\be_T$, so we can examine the Lipschitz property of 
	\[
	\hat{f}(\bw) =
	\frac{\mu_{k}}{k!2^{\frac{k+3}{2}}}\left(g(w_1)+g(w_T)+\sum_{i=1}^{T-1}g(w_i-w_{i+1})-\gamma w_1\right).
	\]
	Let 
	\[
	\br_i := \begin{cases} \be_1 & i = 0\\ 
	\be_i - \be_{i+1} & 1 \leq i  \leq T-1  \\
	\be_T & i = T
	\end{cases}.
	\]
	
	By Differentiating $k$ times, we have that 
	\begin{align*}
	\nabla^{(k)}\hat{f}(\bw) &=  \frac{\mu_{k}}{k!2^{\frac{k+3}{2}}}\left(g^{(k)}(w_1)\br_0^{\otimes k}  +  g^{(k)}(w_T)\br_T^{\otimes k}  + 
	\sum_{i=1}^{T-1}  g^{(k)}(\inner{\br_i,\bw})\br_i^{\otimes k}\right) \\ 
	& = \frac{\mu_{k}}{k!2^{\frac{k+3}{2}}}\left( 
	\sum_{i=0}^{T}  g^{(k)}(\inner{\br_i,\bw})\br_i^{\otimes k}\right), \\ 
	\end{align*}
	Where
	\[
	\bv^{\otimes p} = \underbrace{\bv \otimes \bv \otimes \cdots \bv}_{\text{$p$ times}}
	\]
	
	Since $g^{(k)}(x) = k!x$
	\begin{align}\label{eq:norm_bound_higher_order}
	\left\|\nabla^{(k)}\hat{f}(\bw) - \nabla^{(k)}\hat{f}(\tilde{\bw})\right\| & = \frac{\mu_{k}}{k!2^{\frac{k+3}{2}}}  \left\|\sum_{i=0}^{T}  \left(g^{(k)}(\inner{\br_i,\bw})-g^{(k)}(\inner{\br_i,\tilde{\bw}})\right)\br_i^{\otimes k}\right\|  \\
	& =  \frac{\mu_{k}}{2^{\frac{k+3}{2}}} \left\|\sum_{i=0}^{T}  \inner{\br_i,\bw-\tilde{\bw}}\br_i^{\otimes k}\right\| 
	\end{align}
	
	Note that for a $k$-th order symmetric tensor $T$ , the operator norm equals (see e.g. \citep{ mu2015successive}):
	\[
	\norm{T} = \max_{\norm{\bx}=1} \left|\sum_{i_1,i_2,...,i_k}T_{i_1,i_2,...,i_k}x_{i_1}x_{i_2}...x_{i_k}\right|
	\]
	
	So,
	\begin{align*}
	\left\|\sum_{i=0}^{T}  \inner{\br_i,\bw-\tilde{\bw}}\br_i^{\otimes k}\right\| & = \max_{\norm{\bx}=1} \left| \sum_{i_1,i_2,...,i_k}\sum_{i=0}^{T}\inner{\br_i,\bw-\tilde{\bw}}r_{i,i_1}r_{i,i_2}...r_{i,i_k}x_{i_1}x_{i_2}...x_{i_k}\right|\\
	& = \max_{\norm{\bx}=1}\left| \sum_{i=0}^{T}\inner{\br_i,\bw-\tilde{\bw}}\sum_{i_1}r_{i,i_1}x_{i_1}...\sum_{i_k}r_{i,i_k}x_{i_k} \right| \\
	& =  \max_{\norm{\bx}=1}\left| \sum_{i=0}^{T}\inner{\br_i,\bw-\tilde{\bw}}\inner{\br_i,\bx}^k \right| \leq 2^{\frac{k}{2}}\max_{\norm{\bx}=1}\sum_{i=0}^{T}\left| \inner{\br_i,\bw-\tilde{\bw}} \right| \left| \left\langle \frac{\br_i}{\sqrt{2}},\bx \right\rangle \right|^k\\
	& \leq  2^{\frac{k+1}{2}}\norm{\bw-\tilde{\bw}} \max_{\norm{\bx}=1}\sum_{i=0}^{T} \left\langle \frac{\br_i}{\sqrt{2}},\bx \right\rangle ^2\\
	& = 2^{\frac{k-1}{2}} \norm{\bw-\tilde{\bw}} \max_{\norm{\bx}=1} x_1^2 + x_T^2 +\sum_{i=1}^{T-1} \left( x_i - x_{i+1}\right)^2 \\
	& \leq 2^{\frac{k-1}{2}} \norm{\bw-\tilde{\bw}} \max_{\norm{\bx}=1} x_1^2 + x_T^2 +2\sum_{i=1}^{T-1}  x_i^2 + 2\sum_{i=2}^{T} x_i^2 \leq  2^{\frac{k+3}{2}} \norm{\bw-\tilde{\bw}}\\
	\end{align*}
	Where in the first inequality we used that $\norm{\br_i} \leq \sqrt{2}~~~\forall i$. 
	
	Plugging that into \eqref{eq:norm_bound_higher_order} we have that
	\[
	\left\|\nabla^{(k)}\hat{f}(\bw) - \nabla^{(k)}\hat{f}(\tilde{\bw}\right\| \leq \mu_{k} \norm{\bw-\tilde{\bw}}
	\]
	as required.
\end{proof}

\subsection{Minimizer of $f_T$}
In order to derive the complexity bound, we will first analyze $\hat{f}_T$, which is a simplified version of $f_T$, as defined in \subsecref{subsec:min_f_T_convex}.
It is easily verified that $\min_\bw f_T(\bw)=\frac{\mu_{k}}{k!2^{\frac{k+3}{2}}}\cdot \min_{\bw}\bw\hat{f}_T(\hat{\bw})$, and moreover, if $\hat{\bw}\in\reals^T$ is a minimizer of $\hat{f}_T$, 
then $\bw^*=\sum_{j=1}^{T}\hat{w}^*_j\cdot \bv_j\in \reals^d$ is a minimizer of 
$f_T$, and with the same Euclidean norm as $\hat{\bw}^*$.

Using an identical proof to \lemref{lem:gen_sol} we can have that
$\hat{f}_T$ has a unique minimizer $\hat{\bw}^* \in \reals^T$, which  
satisfies
\[
\hat{w}^*_t = \delta\cdot(T+1)\cdot\left(1-\frac{t}{T+1}\right)~,
\]
for some $\delta > 0 $ and all $t=1,2,\ldots,T$
and 
\[
g'(w^*_1) + g'(w^*_T) = (\delta T)^{k} + \delta^{k}= \delta^{k}(T^{k} + 1) = \gamma 
\]
hence,
\[
\delta = \left( \frac{\gamma}{T^{k} + 1} \right)^\frac{1}{k}.
\]
By plugging the minimizer, we have that

\begin{align}\label{eq:min_value_higher_order}
\hat{f}_T(\hat{\bw}^*) & = g(w_1^*) + g(w_T^*) + \sum_{i=1}^{T-1}g(w_i^*-w_{i+1}^*)-\gamma w_1^* \nonumber\\
& = \frac{1}{k+1}(\delta T)^{k+1} + \frac{T}{k+1}\delta^{k+1} -\gamma \delta T \nonumber\\
& = \frac{T^{k+1}}{k+1}\left( \frac{\gamma}{T^{k} + 1} \right)^\frac{k+1}{k} + \frac{T}{k+1}\left( \frac{\gamma}{T^{k} + 1} \right)^\frac{k+1}{k} - \gamma T \left( \frac{\gamma}{T^{k} + 1} \right)^\frac{1}{k} \nonumber\\
& = \frac{T}{k+1}\left(T^{k}+1 \right)\left( \frac{\gamma}{T^{k} + 1} \right)^\frac{k+1}{k}  - \gamma T \left( \frac{\gamma}{T^{p-1} + 1} \right)^\frac{1}{p-1} \nonumber\\
& = \frac{T}{k+1} \frac{\gamma^\frac{k+1}{k}}{\left(T^{k} + 1\right)^\frac{1}{k}}  -  T  \frac{\gamma^\frac{k+1}{k}}{\left(T^{k} + 1\right)^\frac{1}{k}} \nonumber\\
& = - \frac{kT\gamma^\frac{k+1}{k}}{(k+1)\left(T^{k} + 1\right)^\frac{1}{k}}
\end{align}
and,

\begin{align}\label{eqn:norm_bound_higherorder}
\norm{\hat{\bw}^*}^2_2 = & \sum_{t=1}^T\hat{w}^{*2}_t 
=\delta^2 \left(1+T\right)^2\sum_{t=1}^T \left(1-\frac{t}{1+T} 
\right)^2 \nonumber \\
& \leq \frac{1}{3}\left( \frac{\gamma}{T^{k} + 1} \right)^\frac{2}{k} \left(1+T\right)^3
\end{align}

Where we used $\sum_{t=1}^T \left(1-\frac{t}{1+T}\right)^2  \leq \frac{1}{3}(1+T)$ as in \propref{prop:opt_convex}.
\subsection{Oracle Complexity Lower Bound}\label{subsec:mainproof_convex_higherorder}
The derivation of the lower complexity bound will be exactly the same as in \subsecref{subsec:mainproof_convex}.

In \subsecref{subsec:mainproof_convex} we showed that we can lower bound the 
optimization error $f_{2T}(\bw_T)-\min_{\bw}f_{2T}(\bw)$ by 
$\min_{\bw}f_{T}(\bw)-\min_{\bw}f_{2T}(\bw)$. Using the fact that
\[
f_T(\bw^*)=\frac{\mu_{k}}{k!2^{\frac{k+3}{2}}}\cdot\hat{f}_T(\hat{\bw}^*)~,
\] 
this equals
\[
\frac{\mu_{k}}{k!2^{\frac{k+3}{2}}}\left(\min_{\bw}\hat{f}_T(\bw)-\min_{\bw}\hat{f}_{2T}(\bw)\right).
\]

Letting $f_T^*$ and $\hat{f}_T^*$ to be the minimal values of $f_T$ and $\hat{f}_T$ respectively, and by using equation \eqref{eq:min_value_higher_order} then

\begin{align*}
\hat{f}_{T}^* - \hat{f}_{2T}^* &= \frac{k\gamma^\frac{k+1}{k}}{(k+1)\left(1 + \frac{1}{(2T)^{k}}\right)^\frac{1}{k}}- \frac{k\gamma^\frac{k+1}{k}}{(k+1)\left(1 + \frac{1}{T^{k}}\right)^\frac{1}{k}} \\
& \geq \frac{k\gamma^\frac{k+1}{k}}{k+1}\left( 1 - \frac{1}{k(2T)^{k}} - \left( 1 - \frac{1}{kT^{k}} + \frac{k+1}{2k^2 T^{2k}} \right)\right) \\ 
& = \frac{\gamma^\frac{k+1}{k}}{(k+1)kT^{k}}\left( 1 - \frac{1}{2^{k}} - \frac{k+1}{2kT^{k}}\right) \\ 
& \geq  \frac{\gamma^\frac{k+1}{k}}{6(k+1)kT^{k}} \\
\end{align*}

The last inequality holds for $k=1, T \geq 3$, $k = 2, T\geq 2$ or $ k  \geq 3, T \geq 1$.  It can be verified that for the other cases, the inequality above holds.

Since we want $f_{T}^* - f_{2T}^*$ to be as large as possible, we will set $\gamma$ to be as large as possible, under the constraint that $\norm{\bw^*_{2T}} \leq D$. By \eqref{eqn:norm_bound_higherorder} we can choose
\[
\gamma = \frac{3^{\frac{k}{2}}D^{k}\left(1+ (2T)^{k} \right)}{(1+2T)^{\frac{3k}{2}}}
\]

\begin{align*}
\hat{f}_{T}^* - \hat{f}_{2T}^* & \geq \frac{3^{\frac{k+1}{2}}D^{k+1}\left(1+ (2T)^{k} \right)^{\frac{k+1}{k}}}{6(k+1)kT^{k}(1+2T)^{\frac{3(k+1)}{2}}} \\
& \geq \frac{2^{k+1} D^{k+1}}{6\cdot3^{k+1}(k+1)kT^{\frac{3k+1}{2}}}
\end{align*}
Thus, according to the discussion in \subsecref{subsec:mainproof_convex_higherorder}, the final bound is 
\[
f_{T}^* - f_{2T}^* \geq \frac{\mu_{k}\sqrt{2}^{k+1} D^{k+1}}{12\cdot3^{k+1}(k+1)!kT^{\frac{3k+1}{2}}}
\]
and the number of iterations required for having $\min_{\bw}f_{T}(\bw)-\min_{\bw}f_{2T}(\bw) < \epsilon$ , $T_\epsilon$
must satisfy 
\begin{align*}
T_\epsilon & \geq \left(\frac{\mu_{k}\sqrt{2}^{k+1} D^{k+1}}{12\cdot3^{k+1}(k+1)!k\epsilon}\right)^\frac{2}{3k+1} \\
& \geq c\left(\frac{\mu_{k} D^{k+1}}{(k+1)!k\epsilon}\right)^\frac{2}{3k+1}
\end{align*}
Where $c = \left(\frac{1}{12}\right)^{\frac{1}{5}}\cdot 
\left(\frac{\sqrt{2}}{3}\right)^\frac{4}{5}$.

\subsection*{Acknowledgments}

We thank Yurii Nesterov for several helpful comments on a preliminary version 
of this paper, as well as Naman Agarwal, Elad Hazan and Zeyuan Allen-Zhu for 
informing us about the A-NPE algorithm of \citet{monteiro2013accelerated}.

\bibliographystyle{plainnat}
\bibliography{bib}

\appendix

\section{An Improved Second-Order Oracle Complexity Bound for Strongly 
Convex Functions}
\label{sec:opt_strongly_convex_alg}

In this section, we show how the A-NPE algorithm of  
\cite{monteiro2013accelerated}, which is a second-order method analyzed for 
smooth convex functions, can 
be used to yield near-optimal performance if the function is also strongly 
convex. Rather than directly adapting their analysis, which is non-trivial, we 
use a 
simple restarting scheme, which allows one to convert an 
algorithm for the convex setting, to an algorithm in the strongly convex 
setting\footnote{We note that the \emph{reverse} direction, of adapting 
strongly convex 
optimization algorithms to the convex case, is more common in the 
literature, and can be achieved using regularization or more sophisticated 
approaches \citep{allen2016optimal}.}.

Our algorithm is described as follows: In the first phase, we apply a generic 
restarting scheme (based on \cite[Subsction 4.2]{arjevani2016iteration}), where 
we repeatedly run A-NPE for a bounded number of steps, followed by restarting 
the algorithm, running it from the last iterate obtained. By strong convexity, 
we show that each such epoch reduces the suboptimality by a constant factor. 
Once we reach a point sufficiently close to the global optimum, we switch to 
the second phase, where we use the cubic-regularized Newton method to get a 
quadratic convergence rate.

To formalize this, let us first analyze the convergence rate of the first phase.
We assume that we use the algorithm described in \citet[Subsection 
7.4]{monteiro2013accelerated}\footnote{Specifically, since in 
our framework we do not limit computational resources, we assume that the 
minimization problem in Eq. (6.1) of \citet{monteiro2013accelerated} can be 
solved 
exactly.}. By \cite[Theorem 6.4 and Theorem 3.10]{monteiro2013accelerated}, we 
have that the $t$'th iterate satisfies
\begin{align*}
	\norm{\bw_t-\bw_*}\le D ~~\text{and }~~ f(\bw_t) - f(\bw^*) \le \frac{c 
	\mu_2 
	\norm{\bw_1-\bw^*}^3}{t^{7/2}},
\end{align*}
where $\mu_2$ is the Lipschitz constant of $\nabla^2 f$, $\bw_1$ is the
initialization point, $\bw^*$ is the unique minimizer (due to strong convexity) 
of 
$f$, $D$ bounds $\norm{\bw_1-\bw^*}$ from above, and $c>0$ is some universal 
constant. Since $f$ is also assumed to be $\lambda$-strongly convex, we have
\begin{align*}
	\frac{\lambda}{2}\norm{\bw_1 - \bw^*}^2 &\le f(\bw_1) - f(\bw^*),
\end{align*}
hence
\begin{align*}
	f(\bw_t) - f(\bw^*) \le \frac{c \mu_2 \norm{\bw_1-\bw^*}^3}{t^{7/2}} \le  
	\frac{2c \mu_2 \norm{\bw_1-\bw^*}(f(\bw_1)-f(\bw^*))}{\lambda t^{7/2}} 
	\le \frac{2c \mu_2 D(f(\bw_1)-f(\bw^*))}{\lambda t^{7/2}}~.
\end{align*}
Thus, running the algorithm for 
\begin{align*}
	\tau &= \left(\frac{4c\mu_2 D}{\lambda}\right)^{2/7}
\end{align*}
iterations, we see that $f(\bw_t) - f(\bw^*) \le {(f(\bw_1)-f(\bw^*))}/{2}$. 
Now, since 
the distance from $\bw_t$ to $\bw^*$ is also smaller than $D$, we may 
initialize the algorithm at the last iterate returned by the previous run and 
run it for $\tau$ iterations to reduce $f(\bw_t) - f(\bw^*)$ in, yet again, a 
factor 
of 2. Applying the algorithm for $T$ iterations (and restarting the algorithmic 
parameters after every $\tau$ iterations) yields
$$f(\bw_T)-f(\bw^*) \le \frac{f(\bw_1)-f(\bw^*) }{2^{T/\tau}}.$$
Equivalently, to obtain an $\epsilon$-optimal solution, we need at most
\begin{align*}
	\left(\frac{4c\mu_2 
	D}{\lambda}\right)^{2/7}\log_2\left(\frac{f(\bw_1)-f(\bw^*) 
	}{\epsilon}\right)
\end{align*}
oracle calls (note that this restarting scheme can be applied also on uniform 
convex functions of any order (defined in, e.g., 
(\cite{vladimirov1978uniformly})).\\

Next, after performing a number of iterations sufficiently large to obtain high 
accuracy solutions, we proceed to the second phase of the algorithm where 
cubic-regularized Newton steps are applied (see 
\citet{nesterov2008accelerating}). According to that analysis, after reducing 
the optimization error to below $\lambda^3/4\mu_2^2$, the number of 
cubic-regularized Newton steps required to 
achieve an $\epsilon$-suboptimal solution is
\begin{align*}
	\Ocal\left(\log\log_2\left(\frac{\lambda^3}{\mu_2^2\epsilon}\right)\right).
\end{align*}
Thus,  using the $\mu_1$-Lipschitzness of the gradient to bound 
$f(\bw_1)-f(\bw^*)$ 
from above by $\mu_1 D^2/2$, we get that the overall number of iterations is at 
most
\begin{align*}
	%\left(\frac{4c\mu_2 D}{\lambda}\right)^{2/7}\log_2\left(\frac{(\mu_2 
	%\mu_1^2)(f(\bw_1)-f(\bw^*)) }{c'{\lambda^3}}\right) + 
	%\log_2\log_2\left(\frac{2\lambda^3}{\mu_2^2\epsilon}\right).
	\Ocal\left(\left(\frac{\mu_2 
	D}{\lambda}\right)^{2/7}\log_2\left(\frac{\mu_1 
	\mu_2^2D^2 }{\lambda^3}\right) + 
	\log\log_2\left(\frac{\lambda^3}{\mu_2^2\epsilon}\right)\right)~.
\end{align*}

\end{document}